\documentclass[12pt,a4paper,oneside]{amsart}

 \usepackage{amsfonts, amsmath, amssymb, amsthm, amscd, hyperref, anysize, graphicx, cleveref, framed, xcolor, wrapfig, mathtools, thmtools}

\theoremstyle{definition}
\newtheorem{theorem}{Theorem}[section]
\newtheorem{lemma}[theorem]{Lemma}

\newtheorem{conjecture}[theorem]{Conjecture}
\newtheorem{problem}[theorem]{Problem}

\newtheorem{proposition}[theorem]{Proposition}

\newtheorem{definition}[theorem]{Definition}
\newtheorem{question}[theorem]{Question}

\theoremstyle{remark}
\newtheorem{remark}[theorem]{Remark}
\newtheorem{example}[theorem]{Example}

\numberwithin{equation}{section}

\DeclareMathOperator{\vol}{vol}
\DeclareMathOperator{\area}{area}

\DeclareMathOperator{\len}{len}
\DeclareMathOperator{\act}{\mathcal{A}}

\DeclareMathOperator{\dist}{dist}

\DeclareMathOperator{\conv}{conv}
\DeclareMathOperator{\pos}{pos}

\title{Triangle covering problems and the Viterbo inequality in the plane}
\author{Alexey Balitskiy}
\address{University of Luxembourg, Department of Mathematics, 6 avenue de la Fonte, L-4364 Esch-sur-Alzette, Luxembourg}
\email{alexey.balitskiy@uni.lu}
\author{Ivan Mitrofanov}
\address{Universit\'e de Gen\`eve, Section de math\'ematiques, 7-9 rue G\'en\'eral-Dufour, 1204 Gen\`eve, Switzerland}
\email{ivan.mitrofanov.math@gmail.com}
\author{Alexander Polyanskii}
\address{Emory University, Department of Mathematics, Atlanta, GA 30322, US}
\email{apolian@emory.edu}

\begin{document}
\begin{abstract}
We review a certain problem on covering triangles in the plane. Equivalently, it can be viewed as a family of `isobilliard' inequalities in convex shapes, and as a special case of Viterbo's conjecture in symplectic geometry. We give an elementary overview of these topics and, using the optics of the covering problem, we establish several new special cases of Viterbo's conjecture, provide a simple explanation of the counterexample of Haim-Kislev and Ostrover, and state a few open questions. The main novel result is a proof of Viterbo's conjecture for lagrangian products $K \times Q$, where $Q \subset \mathbb{R}^2$ is any quadrilateral and $K \subset \mathbb{R}^2$ is any convex shape.
\end{abstract}
\maketitle

\section{A glance: triangle covering problems}\label{sec:intro}

An engineer has a few triangular rulers and wants to keep them neatly in a flat folio. The shapes of the rulers are known, as are the engineer's peculiar preferences: they want the folio to be convex, and they refuse to flip or rotate the rulers; the rulers may only be moved by parallel translation. What are the minimal area of such a folio and its optimal shape? This is the question we will address. Formally, given a collection of triangles in the plane, we are looking for a way to parallel translate them, with overlaps allowed, in order to minimize the area of their convex hull. Let us mention some concrete instances before we state the problem abstractly and explain where the triangles come from. 

\begin{enumerate}
    \item Engineer D.~Isk has infinitely many rulers: one for each triangle with perimeter $2$, including degenerate ones, i.e., ordinary straightedges of length $1$. This is equivalent to \emph{J.~Wetzel's worm problem} \cite{wetzel1973sectorial}, and it is still open. We mention the current progress in \Cref{sec:history}.
    \item \begin{minipage}[t]{0.6\linewidth}
        \vspace{-8pt}
        Engineer Q.~Uadrilateral has four rulers, depending on two parameters $a,b>1$, as shown in the figure on the right.
        We solve this problem in \Cref{sec:quadrilateral-covering}, establishing a new special case of Viterbo's conjecture (more about it below).
    \end{minipage}%
    \hfill
    \begin{minipage}[t]{0.38\linewidth}
        \vspace{-8pt}
        \centering
        \includegraphics[width=\linewidth]{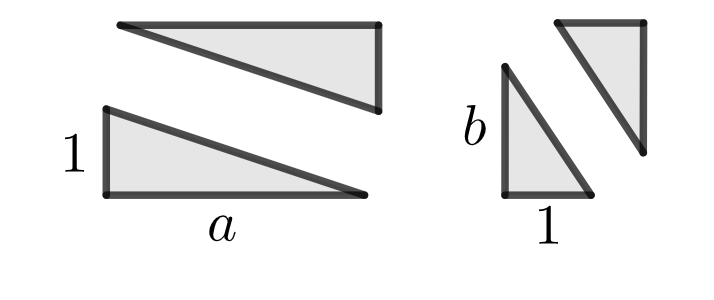}
    \end{minipage}
    \item Engineer P.~Entagon has ten rulers: one is a triangle with angles $\pi/5$, $\pi/5$, $3\pi/5$, and the others are its copies rotated by $k\pi/5$, $k = 1,\ldots,9$. An attempt to solve this problem (\Cref{sec:pentagonal-covering}) leads to the counterexample of P.~Haim-Kislev and Y.~Ostrover~\cite{haim2026counterexample}, which disproves Viterbo's conjecture in its greatest generality.
\end{enumerate}

All these questions are instances of a single problem asking to find the \emph{smallest $Q$-cover} (\Cref{prob:cover}), where $Q$ can be any convex shape (by a convex shape, we mean any compact convex set in the plane with non‑empty interior). In the examples above $Q$ is a disk, or a quadrilateral, or a regular pentagon. 

We will briefly review the history of covering problems in \Cref{sec:history}, as well as the relation between the smallest $Q$-cover and the geometry of billiards and Reeb orbits, leading to a conjecture of C.~Viterbo. This conjecture is a natural question of isoperimetric flavor in symplectic geometry, and a version of it happens to be equivalent to our $Q$-covering problem. Without going into details, we state this version now, and in \Cref{sec:history} we will explain the terminology, including the symplectic capacity $c(\cdot)$ that we use.

\begin{conjecture}[False in this generality!]\label{conj:viterbo}
For any convex bodies $K, Q \subset \mathbb{R}^n$, their lagrangian product satisfies the `isocapacitary' inequality:
\[
\vol (K \times Q) \ge \frac{c(K \times Q)^n}{n!}.
\]
\end{conjecture}

In the meantime, the reader may think of the following `isobilliard' reformulation of this conjecture, which will be explained in \Cref{sec:history}: for any convex bodies $K, Q \subset \mathbb{R}^n$, in the ``billiard table'' $K$ there exists a periodic billiard orbit of length at most $\sqrt[n]{n! \vol (K \times Q)}$, where both billiard reflections and lengths are defined using the asymmetric norm on $\mathbb{R}^n$ whose dual unit ball is a translate of $Q$.

\Cref{conj:viterbo} is known to be true in several special cases~\cite{hermann1998non,karasev2019viterbo,balitskiy2020equality,abbondandolo2025symplectic}, but in the generality stated above it was disproved recently by P.~Haim-Kislev and Y.~Ostrover~\cite{haim2026counterexample}. However, their counterexample does not eliminate one of the main motivations behind the conjecture: a weaker symmetric variant of Viterbo's conjecture would still imply~\cite{artstein2014from} the longstanding Mahler conjecture~\cite{mahler1939ubertragungsprinzip} from 1939. 
In the sequel we refer to the inequality from \Cref{conj:viterbo} as the \emph{Viterbo inequality}, meaning that this is a property that may or may not hold for particular products.

In this paper, we work in the case $n=2$. In \Cref{sec:reduction} we explain the equivalence of \Cref{conj:viterbo} and the problem of the smallest $Q$-cover (\Cref{prob:cover}). It boils down to an elementary question about decomposing (in some way) a closed curve into triangles, whose shapes align well with $Q$ (\Cref{def:interleave}).

In \Cref{sec:triangular-covering,sec:quadrilateral-covering,sec:pentagonal-covering,sec:hexagonal-covering} we consider the problem of the smallest $Q$-cover in the following cases:
\begin{itemize}
    \item $Q$ is a triangle---as a warm-up and a showcase of our techniques;
    \item $Q$ is a quadrilateral---as the main non-trivial result of this paper;
    \item $Q$ is a pentagon---as a simple explanation of why Viterbo's conjecture is false;
    \item $Q$ is a hexagon---as a source of additional tricks, proofs, and open questions.
\end{itemize} 

When $Q$ is any quadrilateral or a regular hexagon, we completely describe all smallest $Q$-covers. The corresponding result on the symplectic side is two-fold. First, we establish the Viterbo inequality for two new classes of examples.

\begin{theorem}\label{thm:quadri-hexa-viterbo}
\Cref{conj:viterbo} holds if $K \subset \mathbb{R}^2$ is any convex shape and $Q \subset \mathbb{R}^2$ is either an arbitrary quadrilateral or an affinely regular hexagon:
\[
\area K \cdot \area Q \ge \frac12 c(K \times Q)^2.
\]
\end{theorem}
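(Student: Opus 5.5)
The plan is to pass through the reduction of \Cref{sec:reduction}, which turns \Cref{conj:viterbo} for $n=2$ into the smallest $Q$-cover problem (\Cref{prob:cover}). By monotonicity and homogeneity of $c$, it suffices to consider $K$ with $c(K\times Q)=1$. This means $K$ admits no closed $Q$-billiard trajectory of length $<1$. Equivalently, up to translation, $K$ contains a copy of every closed polygonal curve that ``interleaves'' $Q$ in the sense of \Cref{def:interleave}. Since any such curve can be cut into triangles whose edge directions are aligned with $Q$, $K$ is a $Q$-cover. The inequality then reads $\area K\ge \frac{1}{2\area Q}$, after normalizing $Q$. We also use affine invariance: applying $A$ to $K$ and $A^{-T}$ to $Q$ preserves both sides. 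This lets us put $Q$ in a normal form. An affinely regular hexagon becomes the regular hexagon, and a quadrilateral becomes one with vertices $(1,0),(0,1),(-a,0),(0,-b)$, matching the two-parameter family of the introduction. Finally, since both sides are invariant under translating $Q$, we may choose the origin of $Q$ conveniently.

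For a polygon $Q$, the relevant triangles form a finite list, as in the warm-up case of \Cref{sec:triangular-covering}. Each vertex $q_i$ of $Q$ gives a support direction, and the minimal billiard/Reeb orbits correspond to triangles (or degenerate segments) whose sides are normal to three vertices of $Q$ and whose $Q$-length equals $1$. For the quadrilateral, we get the four rulers of the introduction, one for each triple of vertices. For the hexagon, we get the triangles from alternating triples together with the segments from antipodal pairs. The key steps are as follows:
\begin{enumerate}
\item Write down these finitely many triangles $T_1,\dots,T_m$ explicitly in the normal form.
\item Show $\area K\ge \min_{\text{translates}}\area\conv(T_1\cup\dots\cup T_m)$.
\item Compute this minimum and check that it is $\ge \frac{1}{2\area Q}$.
\end{enumerate}

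The lower bound for the convex hull comes from a mixed-area argument. We pick support lines of $K$ in the normal directions of $Q$. For each triangle $T_j$ we have $T_j+t_j\subset K$, so each edge constraint gives an inequality linking the widths $h_K(u_i)+h_K(-u_i)$. For the quadrilateral we place the four triangles and bound $\area K$ from below by the area of an explicit polygon inscribed in $K$. That polygon has vertices taken from the translated triangles, and its area is a piecewise-linear or quadratic function of the translation parameters. The remaining task is to minimize this function. For the hexagon, central symmetry plus the three segments gives an inscribed centrally symmetric hexagon, and the two triangles force it to be large. A Mahler-type area estimate for the convex hull of a triangle and its reflection should then give $\frac{1}{2\area Q}$.

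The main obstacle will be the quadrilateral minimization. There the four triangles can overlap in many combinatorial configurations, and the minimal convex hull depends on $a,b$. I would first try to show that an optimal cover can be assumed to have each pair of triangles sharing a vertex or a side, using a Steiner-type shifting or symmetrization argument. This reduces the problem to finitely many configurations, each of which yields a concrete polygon whose area can be compared directly with $\frac{1}{2\area Q}$. Equality cases, such as $K$ equal to a dilate of the polar of $Q$, would guide which configuration is extremal.
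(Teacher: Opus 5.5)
Your reduction to the smallest $Q$-cover problem matches the paper. Only its easy direction is needed: a $Q$-normal triangle is a closed curve of unit $Q$-length, so it fits into any $K$ with $c(K\times Q)\ge 1$. The gap is that the proposal stops where the theorem begins. Step~(2) is trivial. Step~(3), a lower bound on $\area\conv\bigcup_j(T_j+t_j)$ uniform in the translations, \emph{is} the whole statement, and you give no mechanism for it.

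\begin{itemize}
\item Width inequalities $h_K(u)+h_K(-u)\ge\dots$ in finitely many directions cannot bound area from below, since a long thin needle satisfies them.
\item Minimizing the piecewise-quadratic area of the convex hull of twelve sliding points over a six-parameter space, for every $Q$ in a two-parameter family, is exactly the case analysis a proof must avoid.
\item Your shortcut, that an optimal cover may be assumed to have each pair of triangles share a vertex or a side, is unjustified and in fact false. For generic $Q$ the unique minimizer is the square $K_0$. Each of the four triangles is inscribed in it, touching all four sides, so its position is forced. For generic $Q$ no two triangles share a vertex; coincidences such as $p_5=p_6$ occur only when $(a-1)(b-1)=1$. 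Restricting to vertex-sharing placements would therefore miss the optimum.
\item The heuristic that $K$ is a dilate of $Q^\circ$ also misleads. $K_0$ is generally not one, and for the hexagon the minimizer is a triangle.
\end{itemize}

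For quadrilaterals, the missing idea is the paper's `enclosing constant area' device, which replaces minimization entirely.
\begin{itemize}
\item Instead of the convex hull, take four fixed-order, generally non-convex closed curves $\Gamma_{AD},\Gamma_{AB},\Gamma_{BC},\Gamma_{CD}$ through vertices of the sliding triangles.
\item Their oriented areas $\act(\Gamma_\bullet)$ are \emph{linear} in the translations, and a suitable convex combination of them is independent of the translations.
\item That combination therefore equals its value $\area K_0=1$ at the square configuration, which the Pappus argument shows is a $Q$-cover.
\item \Cref{lem:croissant} (Steiner symmetrization in the slicing direction plus unimodality of slice lengths) then gives $\act(\Gamma_\bullet)\le\area\conv\Gamma_\bullet\le\area K$.
\end{itemize}

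For the hexagon your sketch fails on two counts. First, $K$ need not be centrally symmetric; the optimal $K_0$ is a triangle. Second, the minimal area of the convex hull of $T_y$ and its point reflection $T_o$ is only $2A$, where $A=\area T_y$, whereas $\frac1{2\area Q}=3A$. So a Mahler-type bound for that hull cannot suffice, and the unit segments must enter essentially. The paper gets the missing $A$ as follows:
\begin{itemize}
\item Shrink $\conv(T_y\cup T_o)$ to a tight hexagon $\Phi$ of area $2A$.
\item Attach non-overlapping caps to its six sides using \Cref{lem:mitrofanov-cap}.
\item Use a $Q$-normal segment to show that $K$'s width across each pair of opposite sides of $\Phi$ is at least $\frac32$ times that of $\Phi$. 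Hence the two caps on that pair have total area at least that of the corresponding slice of $\Phi$.
\item Summing over the three pairs gives total cap area at least $\frac12\area\Phi=A$.
\end{itemize}
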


Second, we get a characterization of all equality cases: all convex shapes $K$ such that $\area K \cdot \area Q = \frac12 c(K \times Q)^2$, when $Q$ is a quadrilateral or an affinely regular hexagon. It turns out to be quite tricky. For example, in the problem of the smallest folio of Q.~Uadrilateral, if $\frac1a + \frac1b \neq 1$, then there is a unique equality case, and if $\frac1a + \frac1b = 1$, then there is a one-parameter family of such equality cases. When $Q$ is an affinely regular hexagon, there are two two-parameter families of equality cases as well as two isolated ones.

The counterexample of \cite{haim2026counterexample} raises a natural question of how much the Viterbo inequality should be relaxed in order to be true (\cite[Question~2]{haim2026counterexample}). It is known~\cite{artstein2008m} that \Cref{conj:viterbo} holds up to a factor of an absolute constant raised to the power of $n$. We state this question in our special case of interest, to which our methods apply.

\begin{question}\label{ques:constant-viterbo}
What is the smallest number $\kappa$ such that for any convex shapes $K, Q \subset \mathbb{R}^2$, the relaxed Viterbo inequality 
\[
\area K \cdot \area Q \ge \frac1{2\kappa} c(K \times Q)^2
\]
holds?
\end{question}

It follows from \cite{haim2026counterexample} that $\kappa \ge \frac{\sqrt 5 + 3}{5} > 1$. As a corollary of the Viterbo inequality for quadrilaterals, we will see that $\kappa < \sqrt 2$.

\subsection*{Acknowledgments} This material is based in part upon work supported by the HORIZON EUROPE Research and Innovation programme under Marie Skłodowska-Curie grant agreement No. 101107896. Alexander Polyanskii is partially supported by NSF grant DMS 2349045. Ivan Mitrofanov is supported by SNSF grant TMAG-2\_216487\slash1.

\section{A chronicle: from worms through billiards to symplectic systoles}\label{sec:history}

\subsection{Worms}
While in this paper we focus on \emph{translation covers}, the history of covering problems starts with \emph{congruence covers}, also known as \emph{displacement covers} or \emph{universal covers}. More specifically, given a collection of shapes $\{T_j ~\vert~ j \in J\}$ to be covered, one is asked to find the smallest (in some sense) shape $K$ such that for any $j \in J$ there is an isometry $f_j$ of the euclidean plane such that $f_j(T_j) \subset K$. In other words, $K$ contains a congruent copy of each $T_j$. Apparently, the first covering problem is due to H.~Lebesgue, who asked in 1914 about the area-minimizing congruence cover for all sets of unit diameter (\cite[Chapter~11.14, Problem~1]{brass2005research}). The question is still open; see \cite{brass2005lower,gibbs2018upper} for the current best bounds. 

If all $f_j$ are required to be translations, a shape $K$ for which one can choose translations $f_j$ with $f_j(T_j) \subset K$ is called a \emph{translation cover} for the family $\{T_j ~\vert~ j \in J\}$. The first famous question on translation covers was asked by S.~Kakeya in 1917: what is the smallest area of a shape that contains a translated copy of every ``needle'' (any line segment of unit length)? The solution among convex shapes was shown by J.~Pál to be an equilateral triangle~\cite{pal1921minimumproblem}, whereas Besicovitch constructed non-convex covers of arbitrarily small area~\cite{besicovitch1928kakeya}.

A famous unsolved problem, known as \emph{L.~Moser's worm problem}~\cite[Chapter~11.4, Problem~4]{brass2005research}, is to find the area-minimizing congruence cover for the collection of curves of length $1$. In~\cite{wetzel1973sectorial}, J.~Wetzel proved a lower bound for the area of such a congruence cover, which is still unbeaten. More importantly for us, in the same paper Wetzel proposed a related problem: find the smallest area of a convex \emph{translation} cover for all \emph{closed} curves of length $1$. We call it \emph{Wetzel's worm problem}, though given that the curves are closed, we could have called it a ringworm problem as well. This problem is open as well~\cite[Chapter~11.4, Problem~8]{brass2005research}, with the best lower bound due to Wetzel himself~\cite{wetzel1973sectorial}. A classical observation due to V.~Klee~\cite{klee1953critical}, following directly from Helly's theorem, leads to an equivalent reformulation of Wetzel's worm problem~\cite[Remark~1]{bezdek1989covering}, which we stated in the introduction: a convex set $K$ is a translation cover for any closed curve of unit length if and only if it is a translation cover for any triangle of unit perimeter (including degenerate ones, which are line segments of length $1/2$), so one might look for the smallest translation cover for triangles of unit perimeter. The best (in terms of area) known example of such a set $K$ (and the best upper bound for the area in Wetzel's problem) is due to H.~Mallée~\cite{mallee1994translationsdeckel}, and we describe this example now. In \Cref{fig:mallee}, left, one sees a parabola with a vertical directrix (not shown) and focus $F$, so that every ray of light passing through $F$ reflects off the parabola and exits horizontally. An arc of this parabola can be reflected under the $D_3$ dihedral group to obtain the shape $K$, shown in \Cref{fig:mallee}, right, circumscribed around a regular triangle (one of the dashed triangles) having $F$ as one of its vertices and with its opposite side horizontal. Inside $K$ one sees three dashed closed curves---two triangles and one line segment traversed twice---of equal length.

\begin{figure}[ht]
  \centering
  \begin{minipage}{0.48\textwidth}
    \centering
    \includegraphics[width=\linewidth]{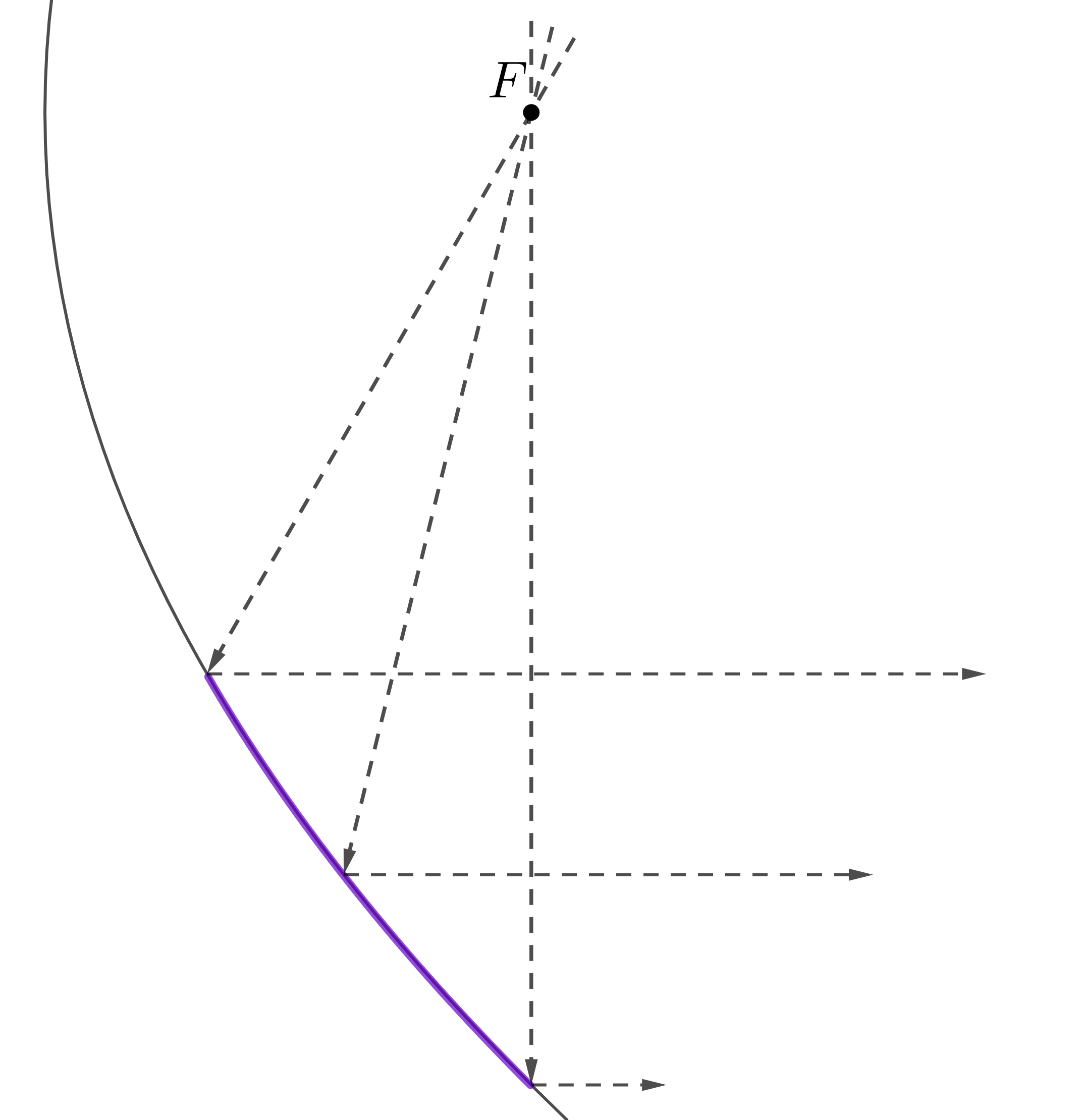}
  \end{minipage}
  \hfill
  \begin{minipage}{0.48\textwidth}
    \centering
    \includegraphics[width=\linewidth]{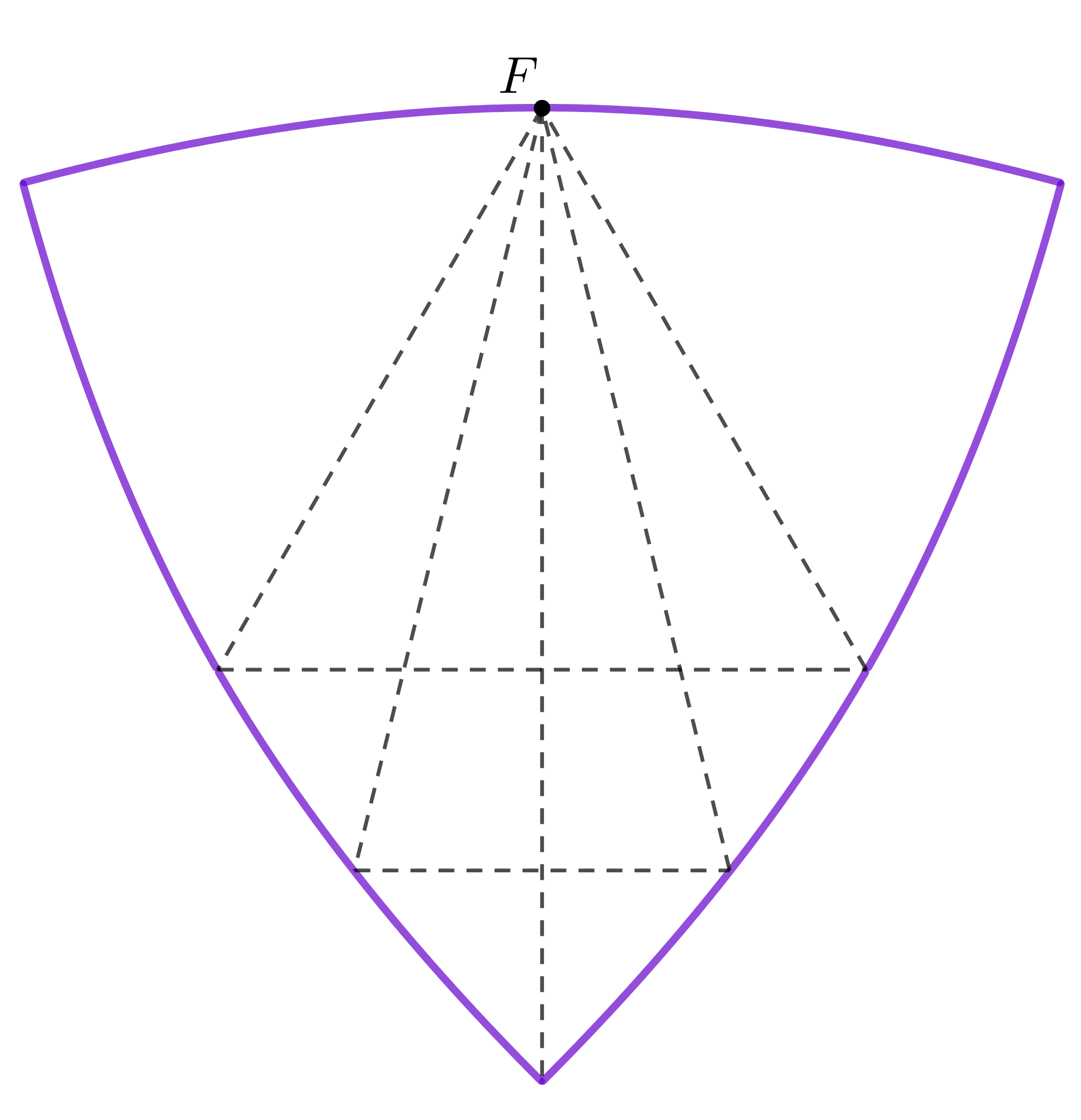}
  \end{minipage}
  \caption{Mallée's parabolic triangle (right) is composed of six parabolic arcs. One of these arcs is shown on the left, and the full shape is obtained by reflecting it under the $D_3$ dihedral group around the central regular triangle.}
  \label{fig:mallee}
\end{figure}

We note that the corresponding problem of finding the smallest \emph{congruence} cover for all triangles of fixed perimeter is solved in~\cite{furedi2000smallest}. We recommend \cite{wetzel2003fits} and \cite[Chapter~11.4]{brass2005research} for further information on planar covering problems. Among recent advances, we mention the version of Wetzel's problem that only considers centrally symmetric closed curves; it is solved in~\cite{jung2023universal}.

In \Cref{sec:reduction} we will pose a version of Wetzel's worm problem in which both the way lengths are measured and the collection of triangles to be covered depend on a parameter---a convex polygon $Q$. Some instances of this problem were given in the introduction.

\subsection{Billiards}

An important insight of \cite{bezdek1989covering} is an equivalent reformulation of Wetzel's worm problem in terms of billiards.\footnote{People had considered billiard orbits in the context of covering closed curves earlier, e.g. in~\cite{chakerian1973minimal}, but to our knowledge \cite{bezdek1989covering} was the first to give an equivalent reformulation of Wetzel’s worm problem in billiard terms.} The idea is to consider the triangle (perhaps, degenerate) of shortest perimeter that cannot be covered by a translate of the interior of $K$; it turns out it can be translated so that all its vertices lie on the boundary of $K$, and its sides are related to the boundary in a way that is akin to the billiard reflection rule. The only catch is that the corresponding boundary point does not have to belong to a smooth part of the boundary of $K$, so the billiard rule needs to be generalized. We say that a polygonal curve $u \to v \to w$, where $u,w \in K$, $v \in \partial K$, $u \neq v \neq w$, satisfies the \emph{generalized billiard reflection rule} at $v$, if there is a support line $\ell$ for $K$ at $v$ such that the outward normal vector $\nu$ to $\ell$ forms equal angles with the vectors $v-u$ and $w-v$. The central objects of this subsection are \emph{closed}, or \emph{periodic}, generalized billiard orbits. We say that a billiard orbit is \emph{$m$-periodic} if it has $m$ reflection points in $\partial K$. Now we can formulate the elegant result of K.~Bezdek and R.~Connelly~\cite[Lemma~8]{bezdek1989covering}, using modern terminology. 

\begin{theorem}\label{thm:bezdek}
    A convex shape $K$ is a translation cover for any closed curve of unit length if and only if $K$ does not admit a $2$- or $3$-periodic generalized billiard orbit of length less than $1$.
\end{theorem}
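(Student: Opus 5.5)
We reduce the statement to triangles using the Klee/Helly observation quoted above: $K$ is a translation cover for all closed curves of unit length if and only if it covers a translate of every triangle of unit perimeter, degenerate ones included. So it suffices to show that $K$ covers all triangles of perimeter $1$ exactly when there is no $2$- or $3$-periodic generalized billiard orbit of length less than $1$. By scaling and closedness it is convenient to work with the threshold
\[
p^* = \inf\{\text{perimeter of } T : \text{no translate of } T \text{ lies in } \operatorname{int} K\}.
\]
The claim then reads: $p^*$ equals the minimal length of a $2$- or $3$-periodic generalized billiard orbit in $K$.

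\emph{Orbits are not covered.} Take a generalized billiard orbit with vertices $v_1,\dots,v_m\in\partial K$, $m\in\{2,3\}$, and support lines $\ell_i$ with outward normals $\nu_i$. The reflection rule says $\nu_i$ is a positive multiple of the difference of the unit vectors along the incoming and outgoing edges. Summing over $i$ shows that $0$ lies in the positive hull of the $\nu_i$: for $m=2$ they are opposite, and for $m=3$ they positively span. Then no translate $T+t$ of the orbit polygon $T$ fits in $\operatorname{int}K$. Indeed, write $\sum\lambda_i\nu_i=0$ with $\lambda_i>0$. Strict interiority would force $\langle \nu_i, v_i+t\rangle < \langle \nu_i, v_i\rangle$ for all $i$, i.e.\ $\langle\nu_i,t\rangle<0$ for all $i$. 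Multiplying by $\lambda_i$ and summing gives $0<0$, a contradiction. Hence an orbit of length $L$ gives $p^*\le L$, which proves the ``only if'' direction.

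\emph{A minimal uncovered triangle yields an orbit.} By compactness (Blaschke selection on triangles of bounded perimeter) there is a triangle $T$ of perimeter $p^*$, possibly degenerate, with no translate in $\operatorname{int}K$. By minimality, a translate of $T$ lies in $K$; take the translate of largest inradius-type "depth", or simply any translate in $K$. Let $I$ be the set of vertices on $\partial K$, with corresponding normal cones. If $0\notin\pos$ of the union of those normal cones, then some direction $t$ pushes all boundary vertices strictly inward, and for small $\varepsilon$ this gives $T+\varepsilon t\subset\operatorname{int}K$ (convexity puts the whole triangle inside), a contradiction. So there are vertices $v_i\in\partial K$ and normals $\nu_i$ with $0\in\pos\{\nu_i\}$. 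By Carathéodory in the plane, two opposite normals or three positively spanning ones suffice. These select a segment (traversed twice) or a triangle with vertices $v_i$. Its perimeter is at most $p^*$: for a subset of triangle vertices this is clear, since a segment is half of a degenerate triangle and the convention counts it twice. Minimality then forces this to be $T$ itself, or a degenerate chord of the same perimeter.

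\emph{The reflection rule.} It remains to show the reflection rule holds at each $v_i$ for a suitable choice of $\nu_i$ in the normal cone. This is the main obstacle. We use a first-variation argument. Suppose at a vertex $v$ no normal in $N_K(v)$ bisects the exterior angle, i.e.\ none is parallel to $u_{\mathrm{in}}-u_{\mathrm{out}}$ (the unit edge directions). Then we can move $v$ slightly along $\partial K$, or inward, in a direction strictly decreasing perimeter, since the gradient of perimeter at $v$ is $u_{\mathrm{in}}-u_{\mathrm{out}}$. A triangle with strictly smaller perimeter would then still have all other $\nu_j$ unchanged and a new normal at the moved vertex, keeping $0$ in the positive hull. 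That contradicts minimality via the previous step.

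Making this rigorous needs care at corners and for degenerate triangles. In Lagrange-multiplier form: minimize perimeter subject to $v_i\in K$ and "$0\in\pos$ of the normals". It is cleaner to use the KKT characterization of the dual problem, namely that $p^*=\min$ perimeter over triples on $\partial K$ whose normal cones positively span. At a minimizer, the KKT condition gives $u_{\mathrm{in}}-u_{\mathrm{out}}\in N_K(v_i)$, which is exactly the generalized reflection rule.
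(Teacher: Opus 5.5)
Your outline follows the strategy the paper sketches after Bezdek--Connelly: reduce to triangles via Klee/Helly, take a triangle of least perimeter $p^*$ that does not fit into $\operatorname{int}K$, and extract vertices $v_i\in\partial K$ with normals $\nu_i\in N_K(v_i)$ such that $\sum\lambda_i\nu_i=0$, $\lambda_i>0$. The proof that orbits do not fit into $\operatorname{int}K$ is correct, and so is the extraction of the $v_i$ and $\nu_i$. At coinciding points you should merge normals, which is harmless because $N_K(v)$ is pointed.

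\textbf{The gap.} It is the step you flag yourself, the reflection rule, and your argument for it does not work. You perturb $v$ \emph{along $\partial K$ or inward} and assert that $0$ stays in the positive hull of the normals. But an interior vertex has no normal at all. A vertex sliding along $\partial K$ acquires a new normal, and there is no reason for it to be positively dependent on the others. Nothing in this mechanism uses the failure of the reflection rule. In the unit disk a diameter is a minimizer and does satisfy the rule, yet sliding an endpoint along the circle or inward shortens it, and the shorter chord fits into the open disk. The KKT paragraph does not repair this. The constraint ``the normal cones at $v_1,v_2,v_3\in\partial K$ positively span'' is neither convex nor given by inequalities, so no multiplier rule applies to it. The constraint $v_i\in K$ alone would give the reversed sign $u_{\mathrm{out}}-u_{\mathrm{in}}\in N_K(v_i)$.

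\textbf{The missing idea: freeze the normals and let the vertices leave $K$.} Set
\[
H_i=\{x:\langle\nu_i,x\rangle\ge h_K(\nu_i)\}, \qquad h_K(\nu)=\max_{y\in K}\langle\nu,y\rangle,
\]
the closed half-plane beyond the support line at $v_i$.
\begin{itemize}
\item Your own argument from the first part shows that no polygon whose $i$-th vertex lies in $H_i$ has a translate in $\operatorname{int}K$: a translate by $t$ would force $\langle\nu_i,t\rangle<0$ for all $i$.
\item Hence $(v_1,\dots,v_m)$ minimizes the convex perimeter function over the polyhedron $H_1\times\dots\times H_m$. The admissible perturbations are along the support lines or outward, not inward.
\item Because the constraints are affine, the KKT conditions hold with no constraint qualification. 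They give $u_{\mathrm{in},i}-u_{\mathrm{out},i}=\mu_i\nu_i$ with $\mu_i\ge0$.
\item If $\mu_i>0$, this is the generalized reflection rule with the same normal $\nu_i$.
\item If $\mu_i=0$, then $v_i$ lies between its neighbours. The conditions at the other two vertices then say that they span a $2$-periodic orbit of length $p^*$.
\item Coinciding vertices, where the perimeter is not differentiable, reduce to the two-vertex case after merging normals.
\end{itemize}
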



Let us denote by $c(K \times B)$ the length of the shortest closed generalized billiard orbit in the ``billiard table'' $K$. Here $B$ is the unit disk in the plane, and the rationale behind this notation will be explained a bit later. While the paper \cite{bezdek1989covering} only hints at considering closed generalized billiard orbits of arbitrary period, the later paper~\cite{bezdek2009shortest} shows---using similar ideas---that the shortest closed generalized billiard orbit always exists and is necessarily $2$- or $3$-periodic. Wetzel's worm problem now becomes an `isobilliard' problem: it asks to maximize the ratio $\frac{c(K \times B)^2}{\area K}$ over convex shapes $K$. Now we can state the conjectural optimality of Mallée's example as an isobilliard inequality.

\begin{conjecture}\label{conj:mallee}
Mallée's parabolic triangle has the smallest area among convex shapes $K \subset \mathbb{R}^2$ with $c(K \times B) = 1$.
\end{conjecture}

The next milestone in our story is the observation made in~\cite{akopyan2016elementary}: \Cref{thm:bezdek} remains valid when $\mathbb{R}^2$ is endowed with an arbitrary norm, with lengths and billiard reflections defined accordingly. Given a compact convex set $Q \subset \mathbb{R}^2$ containing the origin in its interior, one can define the (asymmetric) norm $\|\cdot\|_Q$ as the Minkowski functional associated to $Q^\circ$, the polar set of $Q$ (see \Cref{sec:reduction} for the details). This norm defines a non-euclidean way of measuring lengths, which we call $Q$-lengths. Thinking about light propagation in an anisotropic medium and employing Fermat's principle, one is led to an appropriately modified definition of billiard reflection, which gives rise to \emph{Minkowski billiards}, introduced by E.~Gutkin and S.~Tabachnikov~\cite{gutkin2002billiards}. The arguments of \cite{bezdek1989covering,bezdek2009shortest} can be adapted to this new setting, implying that in any convex billiard table $K$ the shortest closed (generalized) $Q$-Minkowski billiard orbit exists and is $2$- or $3$-periodic. We denote by $c(K\times Q)$ its length. The following result appears in~\cite{akopyan2016elementary},\footnote{The paper \cite{akopyan2016elementary} only works with (non-generalized) Minkowski billiards for smooth $K$ and $Q$, and \cite[Theorem~2.1]{akopyan2016elementary} is formulated there in the smooth case. A similar result in the case where $Q$ is smooth and origin-symmetric, so that the norm $\|\cdot\|_Q$ is symmetric and strictly convex, was independently obtained by Y.~Nir~\cite{nir2013closed}. Once an appropriate definition of generalized Minkowski billiards is adopted, the details of the non‑smooth case are straightforward and can be found in~\cite{rudolf2024minkowski}.} building on the ideas from~\cite{bezdek1989covering, bezdek2009shortest}.


\begin{theorem}\label{thm:normed-bezdek} 
A convex shape $K$ is a translation cover for any closed curve of unit $Q$-length if and only if $c(K\times Q) \ge 1$. 
\end{theorem}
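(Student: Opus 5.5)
The plan has three steps: reduce to triangles with a Helly-type argument, handle the forward direction directly from the definition of $c(K\times Q)$, and handle the converse by looking at the shortest closed curve that cannot be covered and showing it is a billiard orbit. Throughout, $\|\cdot\|_Q$ denotes the asymmetric norm whose dual unit ball is $Q$. Because the norm is asymmetric, the orientation of closed curves matters, and I will keep track of it.

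\emph{Step 1: reduction to triangles.} I will first extend Klee's Helly-type observation to the normed setting. A convex $K$ contains a translate of a compact set $C$ if and only if $\bigcap_{x\in C}(K-x)\neq\emptyset$. These sets are planar convex sets, so by Helly's theorem it is enough that every three of them meet. That is, every triangle $\conv\{x_1,x_2,x_3\}$ with vertices on the curve can be covered. Such a triangle, traversed in the order its vertices appear along the curve, is a closed polygonal curve. Its $Q$-length is at most that of the original curve, by the triangle inequality for $\|\cdot\|_Q$ (subadditivity holds even without symmetry). After rescaling, it follows that $K$ covers every closed curve of $Q$-length $\le 1$ if and only if it covers every (possibly degenerate) oriented triangle of $Q$-length $\le 1$. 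Since $K$ is compact, covering curves of length exactly $1$ is equivalent to covering those of length at most $1$: scale the curve up to length $1$ and take the cover, which then also contains the original curve after a suitable translation, because a homothetic copy with ratio less than $1$ about a point of the curve lies in the convex hull of the bigger curve.

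\emph{Step 2: the ``only if'' direction.} Suppose $c(K\times Q)<1$, and let $\gamma$ be a $2$- or $3$-periodic generalized $Q$-billiard orbit of length less than $1$. I will show that no translate of $\gamma$ fits into the interior of $K$, and in fact that no translate of a slightly enlarged copy of $\gamma$ fits into $K$. The reflection rule at each vertex $v_i$ provides a support direction $\nu_i$, i.e.\ a point $q_i\in\partial Q$ together with a normal $\nu_i$ of $K$ at $v_i$. Concretely, the Minkowski reflection law says that $q_{i}-q_{i-1}$ is an outer normal to $K$ at $v_i$, and $v_{i+1}-v_i$ is normal to $Q$ at $q_i$.

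Now suppose a translate $\gamma+t$ lies in $\operatorname{int}K$. The plan is to show that $t$ would have to move strictly inward against every normal $\nu_i$. The normals of a closed billiard orbit positively span the plane, since they sum to zero telescopically as $\sum_i(q_i-q_{i-1})=0$. Hence no $t$ satisfies $\langle t,\nu_i\rangle<0$ for all $i$, and this gives the contradiction. Scaling $\gamma$ by $1/\len_Q(\gamma)>1$ then produces a closed curve of unit length with no cover.

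\emph{Step 3: the ``if'' direction, which is the main obstacle.} Assume $K$ fails to cover some triangle of $Q$-length $\le 1$. By Step 1 and compactness (the triangle can be normalized to have a vertex at the origin), take an oriented triangle $T$ of minimal $Q$-length $L\le 1$ among triangles that cannot be translated into $\operatorname{int}K$. By minimality, $T$ can be translated into $K$ so that it touches $\partial K$ and cannot be pushed into the interior. By the separation theorem, the contact points carry outer normals $\nu_i$ of $K$ such that $0\in\conv\{\nu_i\}$. Again by minimality, each vertex that is not a contact point can be slid so as to shorten $T$, so all relevant vertices lie on $\partial K$. The case of two contacts gives a $2$-periodic orbit; a triangle whose contacts are its three vertices gives a $3$-periodic one.

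The delicate part is the first-order variation of $\|\cdot\|_Q$ under moving a vertex $v_i$ along $\partial K$. Here $\|v_{i+1}-v_i\|_Q=\max_{q\in Q}\langle q,v_{i+1}-v_i\rangle$, and this has subgradients $q_i\in\partial Q$. Stationarity at the minimizer then gives that $q_{i}-q_{i-1}$ lies in the normal cone of $K$ at $v_i$, which is exactly the generalized Minkowski reflection rule. I expect the main difficulty to be handling non-smooth $K$ and $Q$, where subgradients are sets, together with the degenerate $2$-periodic case. The tool I plan to use is a Lagrange-multiplier/KKT argument for the convex-minus-constraint problem, or equivalently the duality used in \cite{rudolf2024minkowski}. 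This yields a closed generalized orbit of length $L\le1$, so $c(K\times Q)\le L\le 1$. To upgrade this to a strict inequality contradicting $c\ge1$, I will run the argument with ``cover'' meaning cover by a translate of the closed set $K$, so that the uncovered curve has length strictly below the orbit threshold.
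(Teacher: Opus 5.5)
Your architecture is the standard one behind the result, which the paper does not prove but cites (\cite{akopyan2016elementary}, building on Bezdek--Connelly, with non-smooth details in \cite{rudolf2024minkowski}). It consists of a Helly reduction to triangles, the observation that a closed orbit cannot be pushed into $\operatorname{int}K$ because its outer normals sum to zero, and the claim that a shortest triangle not fitting into $\operatorname{int}K$ is an orbit. Steps~1 and~2 are fine.

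Your reflection law has the wrong sign, though. If $v_{i+1}-v_i\in N_Q(q_i)$, the outer normal of $K$ at $v_i$ is $q_{i-1}-q_i$ (incoming minus outgoing momentum), not $q_i-q_{i-1}$; test it on a diameter of the unit disk with $Q$ the unit disk. This is harmless in Step~2, where only $\sum_i\nu_i=0$ is used, but it is exactly the sign Step~3 must produce.

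The genuine gap is Step~3, which is announced rather than carried out, and the mechanism you propose does not work as stated. Moving a contact vertex along $\partial K$ is not an admissible variation. It need not preserve the constraint ``cannot be translated into $\operatorname{int}K$'': slide one endpoint of a diameter of a disk along the circle, and the shorter chord immediately fits inside. Also, at corners of $K$ there is no tangent line to vary along.

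The missing idea is to freeze the normals. Choose a minimal set of contacts with normals $n_i\in N_K(v_i)$, $\sum_i\alpha_in_i=0$, $\alpha_i>0$ (two opposite normals, or three with $0$ in the interior of their hull), and put $h_i=\langle v_i,n_i\rangle$.
\begin{itemize}
\item Any polygon $(w_i)$ with $\langle w_i,n_i\rangle\ge h_i$ for all $i$ cannot be translated into $\operatorname{int}K$, since the translation $t$ would need $\langle t,n_i\rangle<0$ for all $i$. So $T$ minimizes the finite convex function $\sum_i\|w_{i+1}-w_i\|_Q$ over this polyhedron.
\item KKT, valid without qualification for affine constraints, gives $q_i\in Q$ with $v_{i+1}-v_i\in N_Q(q_i)$ and $q_{i-1}-q_i=\lambda_in_i$, $\lambda_i\ge0$.
\item The linear dependence among the $n_i$ is unique up to scale, so $\lambda_i=c\alpha_i$.
\item Summation by parts gives $L=\sum_i\langle q_{i-1}-q_i,v_i\rangle=c\sum_i\alpha_ih_i$. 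Here $\sum_i\alpha_ih_i>\langle p,\sum_i\alpha_in_i\rangle=0$ for any $p\in\operatorname{int}K$.
\item Shifting $Q$ so that $0\in\operatorname{int}Q$ gives $L>0$, hence $c>0$, so $T$ is a generalized orbit with the correct reflection sign.
\end{itemize}

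Finally, state your strictness step precisely. Since $K$ is compact, a triangle of length $\le1$ that does not fit into $K$ still does not fit after shrinking by a factor slightly below $1$. Hence $L<1$ and $c(K\times Q)\le L<1$.
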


We remark that even though we restrict our attention here to planar covers and billiards, the papers \cite{bezdek2009shortest,akopyan2016elementary} work in higher dimensions; the relevant higher-dimensional notions are straightforward generalizations of two-dimensional ones. 

The definition of $c(K\times Q)$ is invariant with respect to translations of $K$, and in \Cref{sec:reduction} we explain why one does not need to assume that the origin lies in the interior of $Q$, by defining $Q$-lengths in a way that is translation-invariant with respect to $Q$ as well. We also note that if $K$ or $Q$ is rescaled by a positive factor, then $c(K\times Q)$ gets multiplied by the same factor.

The notation $c(K\times Q)$, yet to be explained, might suggest that the roles of $K$ and $Q$ are somewhat symmetric. This is indeed the case; we outline the intuition very briefly. Let $q, q' \in \partial K$ be two consecutive points of reflection of a billiard orbit. In an anisotropic space, the velocity and the momentum are related as follows: the velocity of the segment $q \to q'$, positively proportional the vector $q'-q$, has to lie in $N_Q(p)$, the outward normal cone of $Q$ at the point $p \in \partial Q$, which is the momentum of $q \to q'$ (rescaling the velocity, we can assume that the momentum is unit). At the point $q'$, a billiard reflection occurs, and it turns out that the new momentum $p' \in \partial Q$ is related to the old one in a way that makes $p-p'$ lie in $N_K(q')$, the outward normal cone of $K$ at $q'$. This surprising symmetry between the reflection rule and the velocity-momentum relation has various applications, including the fact that $c(K\times Q) = c(Q \times K)$. See \Cref{fig:minkowski} for an example of a periodic billiard orbit. To summarize, the Minkowski billiard dynamics in the configuration $(K,Q)$ is equivalent---after reversing time---to the Minkowski billiard dynamics in the configuration $(Q,K)$, where $Q$ becomes the billiard table and $K$ determines the geometry. For the details, we refer to~\cite{artstein2014bounds,rudolf2024minkowski}.

\begin{figure}[ht]
    \centering
    \includegraphics[width=0.7\linewidth]{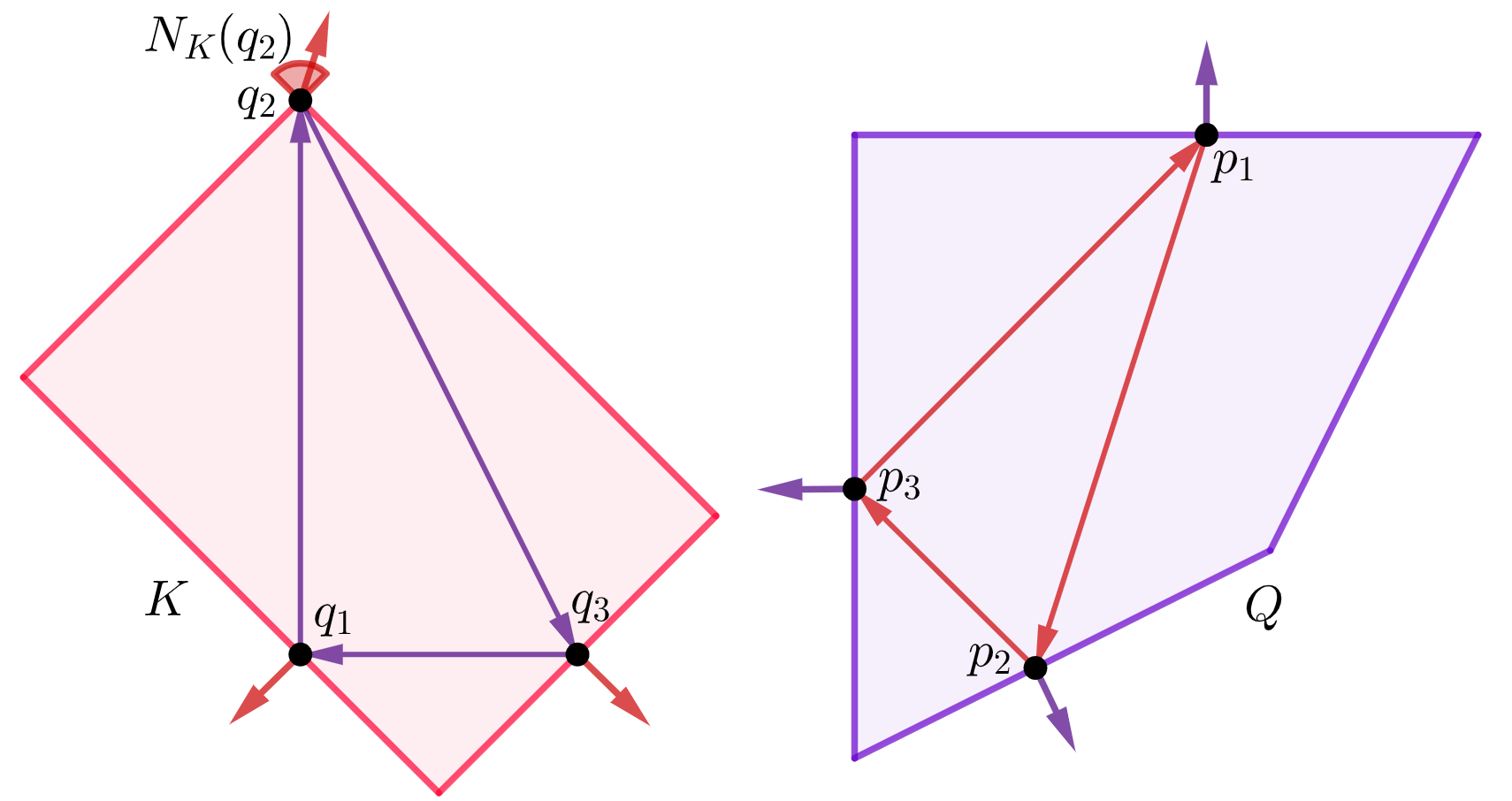}
    \caption{A periodic (generalized) $Q$-Minkowski billiard orbit in $K$ corresponds to a pair of closed polygonal curves in $K$ and $Q$ satisfying symmetric conditions: $q_{i+1} - q_i \in N_Q(p_i)$ and $p_{i-1} - p_{i} \in N_Q(q_i)$ for all $i$.}
    \label{fig:minkowski}
\end{figure}

\subsection{Symplectic systoles}

We explain now how Minkowski billiards fit in the framework of much more general symplectic dynamics. Again, for simplicity, we limit the discussion to the lowest-dimensional scenario, although all the notions can be considered in higher dimensions as well.

In this subsection, it is convenient to regard $\mathbb{R}^2 \times \mathbb{R}^2$ as a phase space with coordinates $(q_1,q_2,p_1,p_2)$. The billiard table $K$ lives in the $q$-space (space of positions), while the dual unit ball $Q$ of the norm lives in the $p$-space (space of momenta). The splitting of $\mathbb{R}^2 \times \mathbb{R}^2$ into the space of positions and the space of momenta is called \emph{lagrangian}, and so is the product $K \times Q$. We will see how the billiard dynamics in the configuration $(K,Q)$ corresponds to the symplectic dynamics on the boundary of their product, $\partial(K \times Q) \subset \mathbb{R}^2 \times \mathbb{R}^2$.

More generally, let $X \subset \mathbb{R}^2 \times \mathbb{R}^2$ be any convex body, the prototype example being $X = K \times Q$. Its boundary $\partial X$ is a convex hypersurface in $\mathbb{R}^2 \times \mathbb{R}^2$. If we pretend for a second that $\partial X$ is smooth, then we have two natural---and in fact equivalent---ways of defining smooth dynamics on it. 

\begin{enumerate}
    \item \underline{Contact viewpoint}. Consider the standard symplectic form $\omega = q_1 \wedge p_1 + q_2 \wedge p_2$. The restriction $\omega\vert_{\partial X}$ has a one-dimensional kernel, which can be oriented according to the sign of the Liouville form $\lambda = p_1 dq_1 + p_2 dq_2$, which endows $\partial X$ with a \emph{contact structure}. The integral curves of $\ker \omega\vert_{\partial X}$ are called \emph{characteristics}, or \emph{Reeb flow trajectories}, and the integral of $\lambda$ along a Reeb flow trajectory is called its \emph{action}.  
    \item \underline{Hamiltonian viewpoint}. View $\partial X$ as a non-degenerate level set of a smooth hamiltonian $H: \mathbb{R}^2 \times \mathbb{R}^2 \to \mathbb{R}$. The associated hamiltonian vector field $\mathfrak{X}$ is determined by $\omega(\mathfrak{X}, \cdot) = -dH(\cdot)$. This canonically defines characteristics as the trajectories of the corresponding hamiltonian flow. 
\end{enumerate}

The second approach is easier to extend to the convex non-smooth case, if we reformulate it as follows. It is convenient to introduce the ``imaginary unit'' operator $\mathcal{J}(q_1,q_2,p_1,p_2) = (-p_1,-p_2,q_1,q_2)$, so that the standard inner product on $\mathbb{R}^2 \times \mathbb{R}^2$ is given by $\langle x,y \rangle = \omega(x, \mathcal{J}y)$. Then characteristics can be equivalently defined as solutions of the classical Hamilton equation of motion $\dot{\gamma}(t) = -\mathcal{J} \nabla H(\gamma(t))$. The action can be rewritten as $\frac12 \int \langle \gamma(t), \mathcal{J}\dot{\gamma}(t)\rangle dt$. In the non-smooth case, the gradient direction $\nabla H$ is replaced by the outward normal cone of $K \times Q$, and the Hamilton differential equation becomes a differential inclusion. It turns out that the resulting definition of Reeb dynamics in $\partial(K \times Q)$ exactly matches the definition of Minkowski billiard dynamics~\cite{artstein2014bounds}, and the action of Reeb flow trajectories in $\partial(K \times Q)$ coincides with the $Q$-length of  corresponding billiard trajectory in $K$.

Having defined the Reeb flow, a natural question is whether \emph{periodic} Reeb flow trajectories exist. They are called \emph{Reeb orbits}. In the setting as above, for a smooth convex hypersurface, the existence was proven in~\cite{rabinowitz1978periodic,weinstein1978periodic}. The extension to the non-smooth case is explained in~\cite[Appendix]{artstein2014bounds}. Reeb orbits of smallest action are often called \emph{symplectic systoles}. Finally we are ready to define the mysterious quantity $c(\cdot)$, one of the many symplectic invariants known as \emph{symplectic capacities}.

\begin{definition}\label{def:capacity}
Given a convex body $X \subset \mathbb{R}^2 \times \mathbb{R}^2$, its \emph{Ekeland--Hofer--Zehnder symplectic capacity} $c(X)$ is the smallest action of a Reeb orbit in $\partial X$.
\end{definition}

Without going into details, we just mention that a vibrant field of research is dedicated to various symplectic capacities, starting from the celebrated Gromov non-squeezing theorem; see, e.g,~\cite{cieliebak2007quantitative}. \Cref{conj:viterbo} was stated in~\cite{viterbo2000metric} more generally, for any symplectic capacity of any convex body, and not just the EHZ capacity of lagrangian products. A major source of motivation for studying \Cref{conj:viterbo} comes from the fact that it implies~\cite{artstein2014from} Mahler's conjecture from 1939~\cite{mahler1939ubertragungsprinzip}, which bounds the volume product $\vol K \cdot \vol K^\circ$ of any convex origin‑symmetric body $K \subset \mathbb{R}^n$ from below by that of a cube. The proof of the implication was significantly simplified in~\cite{akopyan2016elementary}, where it was reduced to a worm problem: if $K$ is origin-symmetric, then every closed curve of $K^\circ$-length $4$ can be covered by a translate of $K$.\footnote{A very short argument for the euclidean version of this statement can be found in~\cite{chakerian1973minimal}; see also the references therein for the history of this statement, which goes back at least to B.~Segre. The proof in~\cite{akopyan2016elementary} of the normed-space version happens to be the same.} 

There are other active directions of symplectic research concerned with the equality cases in the Viterbo inequality. It happens that many of them exhibit various properties of \emph{symplectic balls} in different sense. One way a convex body $X$ may resemble a symplectic ball is when the interior of $X$ is symplectomorphic to the interior of a round ball in $\mathbb{R}^2 \times \mathbb{R}^2$; that is, when there is a diffeomorphism between them preserving the restriction of $\omega$. This is a strong property, and it was recently shown in~\cite{matijevic2025systolic} that it extends to a certain analogy between the boundary $\partial X$ and the standard round sphere $S^3$. The boundary $\partial X$ can be similar to the sphere $S^3$ in terms of its Reeb flow. The Reeb flow on the standard round sphere is exceptionally rigid: the entire $S^3$ decomposes into an $S^2$-worth of Reeb orbits of equal action (forming the classical Hopf fibration $S^3 \to S^2$). When $\partial X$ enjoys similar properties to some extent, one speaks of variants of the \emph{Zoll} property, which is straightforward to define when $\partial X$ is smooth, but becomes delicate in the non-smooth setting. If $\partial X$ is smooth, then the classical Zoll property forces the Viterbo inequality to become an equality~\cite{weinstein1974volume}. If $\partial X$ is non-smooth, as in our case of interest $X = K \times Q$, a strong variant of the Zoll property was proposed in~\cite{matijevic2025systolic}; it is not known whether it implies that the Viterbo inequality is attained with equality.
We will briefly touch on these phenomena when discussing specific equality cases. Some of them are known to be symplectic balls, and the proofs are generally difficult. Some of the others are expected to be, with supporting evidence coming from the fact that they satisfy a weak variant of the Zoll property~\cite[Theorem~1.5]{rudolf2022viterbo}. Still others are known not to be symplectic balls~\cite[Remark 1.8]{haim2026counterexample}. In \Cref{sec:triangular-covering,sec:quadrilateral-covering,sec:hexagonal-covering} we describe several families of lagrangian products $K \times Q$ that we believe to be symplectic balls. Some of us also believe that these families, up to swapping $K$ and $Q$, exhaust all symplectic balls among lagrangian products in dimension $2+2$.

To conclude our brief history survey, we summarize the connection between worms, billiards, and symplectic systoles, explained in this section.

\begin{theorem}\label{thm:capacity-bezdek} The following are equivalent for convex shapes $K$ and $Q$.
\begin{enumerate}
    \item $K$ is a translation cover for any closed curve of unit $Q$-length. 
    \item $K$ is a translation cover for any triangle of unit $Q$-perimeter. 
    \item $K$ does not admit a $2$- or $3$-periodic generalized $Q$-Minkowski billiard orbit with $Q$-length less than $1$.
    \item The shortest periodic generalized $Q$-Minkowski billiard orbit in the billiard table $K$ has $Q$-length at least $1$.
    \item The Ekeland--Hofer--Zehnder symplectic capacity of $K\times Q$ is at least $1$.
\end{enumerate} 
\end{theorem}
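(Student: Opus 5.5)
The plan is to assemble the equivalences from results already quoted in this section, arranged as a chain $(1)\Leftrightarrow(2)$, $(2)\Leftrightarrow(3)$, $(3)\Leftrightarrow(4)$, $(4)\Leftrightarrow(5)$. No new geometric argument is needed beyond adapting the classical euclidean arguments to $Q$-lengths.

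For $(1)\Leftrightarrow(2)$ we use Klee's Helly-type observation. The direction $(1)\Rightarrow(2)$ is trivial, since the boundary of a triangle of unit $Q$-perimeter is a closed curve of unit $Q$-length. For the converse, let $\gamma$ be a closed curve of unit $Q$-length. For each point $x\in\gamma$, consider the convex set $T_x=\{t : x+t\in K\}=K-x$. A translate $K-t$ covers $\gamma$ if and only if $\bigcap_{x\in\gamma}(K-x)\neq\emptyset$. By Helly's theorem in the plane (with compactness handling the infinite family), it suffices that every three of these sets intersect. Any three points $x,y,z\in\gamma$ span a possibly degenerate triangle. By the triangle inequality for the asymmetric norm $\|\cdot\|_Q$, traversed in the cyclic order in which the points appear along $\gamma$, this triangle has $Q$-perimeter at most $1$. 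Rescaling it up to perimeter $1$ about a suitable point, we see that the triple can be covered. The one subtlety is that the asymmetric norm forces us to respect orientation: we must check that the order $x\to y\to z\to x$ along $\gamma$ gives perimeter at most $\len_Q\gamma$, which holds because each arc of $\gamma$ is at least as long as its chord. This is a place where care is needed, because the family in (2) consists of oriented triangles.

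The equivalence $(2)\Leftrightarrow(3)$ is exactly \Cref{thm:normed-bezdek} combined with its proof in \cite{akopyan2016elementary,rudolf2024minkowski}. The key step there is the following. Take a triangle (or segment) $T$ of minimal $Q$-perimeter that no translate of $\operatorname{int}K$ covers. Then every translate of $T$ meets the complement of $\operatorname{int} K$, and a separation argument produces support lines of $K$ whose normals, weighted suitably, sum to zero. First-order optimality of the perimeter then yields the generalized Minkowski reflection rule at each vertex, giving a $2$- or $3$-periodic orbit of length equal to the perimeter. Conversely, a short $2$- or $3$-periodic orbit cannot be translated into $\operatorname{int}K$, because its outward normals positively span the plane.

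For $(3)\Leftrightarrow(4)$ we invoke the result of \cite{bezdek2009shortest}, adapted to Minkowski billiards, that the shortest closed generalized orbit exists and has period $2$ or $3$. Finally, $(4)\Leftrightarrow(5)$ is the identification from \cite{artstein2014bounds} of the Reeb dynamics on $\partial(K\times Q)$ with Minkowski billiard dynamics, under which action corresponds to $Q$-length. Together with \Cref{def:capacity}, this gives $c(K\times Q)$ equal to the shortest orbit length.

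I expect the main obstacle to be the non-smooth bookkeeping in $(4)\Leftrightarrow(5)$ and in $(2)\Rightarrow(3)$. One must check that every closed characteristic on $\partial(K\times Q)$, including gliding motions along $\partial K\times\partial Q$, has action at least the minimal billiard length. In practice, the plan is to cite \cite{artstein2014bounds,rudolf2024minkowski} for these facts rather than reprove them.
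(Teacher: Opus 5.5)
Your proposal is correct and matches the paper. The paper also presents this theorem as an assembly of cited results: Klee's Helly-type observation for (1)$\Leftrightarrow$(2), the Bezdek--Connelly argument in its Minkowski form \cite{akopyan2016elementary,rudolf2024minkowski} together with \cite{bezdek2009shortest} for the billiard items, and the Reeb--billiard identification of \cite{artstein2014bounds} for (5). Your explicit Helly argument, with its arc-versus-chord orientation check, is exactly Klee's observation; the only step you leave implicit is the routine compactness argument showing that, when (2) fails, the minimal triangle not fitting into $\operatorname{int}K$ has $Q$-perimeter strictly less than $1$.
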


This theorem can be used as a tool for studying \Cref{conj:viterbo}: the Viterbo inequality holds for $\bullet\times Q$ if and only if every translation cover for the collection of triangles of unit $Q$-perimeter has area at least $\frac{1}{2 \area Q}$. In the following section we strengthen this criterion by further narrowing down the set of ``worms''.



\section{A pivot: reduction to normal triangles}\label{sec:reduction}

In this section we add to the list of equivalences from \Cref{thm:capacity-bezdek}, by modifying its second item in the case when $Q$ is a polygon. The set of ``worms'' will be narrowed down from all triangles of unit $Q$-perimeter to a finite collection of such (possibly degenerate) triangles whose sides are orthogonal to those of $Q$. First, we carefully define the \emph{$Q$-normed geometry}; then we introduce these distinguished triangles (\Cref{def:normal}); and finally we prove \Cref{thm:normal-cover}, which allows us to restate the Viterbo inequality as a worm problem for a finite collection of distinguished triangles (\Cref{prob:cover}). The material of this section can be generalized to higher dimensions in a straightforward way.


\subsection{Normed geometry and normal triangles} 

We work in the euclidean plane $\mathbb{R}^2$ with the standard inner product $\langle \cdot, \cdot \rangle$, and we endow it with an additional metric structure. 
Apart from the usual euclidean geometry, allowing one to measure usual euclidean lengths, we consider as well the \emph{$Q$-normed geometry}, meaning that we can also measure ``$Q$-lengths'' using the support function of $Q$:
\[
\|x\|_Q \coloneq \max_{y \in Q} \langle x , y \rangle.
\]
If $Q$ contains the origin in its interior, this is equivalent to saying that $\|\cdot\|_Q$ is the norm on $\mathbb{R}^2$ whose unit ball is the set $Q^\circ$, the \emph{polar} (or \emph{dual}) set of $Q$:
\[
Q^\circ \coloneq \left\{ x \in \mathbb{R}^2 ~\middle\vert~ \langle x,y\rangle \le 1 ~ \forall y \in Q \right\}.
\]
We do not require $Q$ to be origin-symmetric, so the norm is not necessarily symmetric: $\|x\|_Q$ may differ from $\|-x\|_Q$ (sometimes the term \emph{gauge} is used for asymmetric norms). 

However, even if we do not require $Q$ to contain the origin in its interior, the number $\|x\|_Q$ is well-defined although it might be negative. Nevertheless, the triangle inequality still holds for $\|\cdot\|_Q$:
\[
\|x_1 + x_2\|_Q = \max_{y \in Q} \langle x_1+x_2 , y \rangle \le \max_{y \in Q} \langle x_1 , y \rangle + \max_{y \in Q} \langle x_2 , y \rangle = \|x_1\|_Q + \|x_2\|_Q.
\]

The $Q$-length obviously changes if $Q$ is shifted. However, in the sequel we only care about $Q$-lengths of closed curves, and shifting $Q$ does not change those. For any $t \in \mathbb{R}^2$, and any closed polygonal curve $\gamma = (v_1 \to \ldots \to v_m \to v_{m+1} = v_1)$, we have: 
\begin{align*}
\len_{Q+t}(\gamma) &= \sum\limits_{i=1}^{m} \|v_{i+1}-v_i\|_{Q+t} \\
&= \sum\limits_{i=1}^{m} \max_{y \in Q+t} \langle v_{i+1} - v_i, y\rangle \\
&= \sum\limits_{i=1}^{m} \max_{y \in Q} \langle v_{i+1} - v_i, y+t\rangle \\
&= \sum\limits_{i=1}^{m} \max_{y \in Q} \langle v_{i+1} - v_i, y\rangle + \sum\limits_{i=1}^{m} \langle v_{i+1} - v_i, t\rangle \\
&= \sum\limits_{i=1}^{m} \|v_{i+1}-v_i\|_{Q} + \langle 0,t \rangle \\
&= \len_{Q}(\gamma).
\end{align*}

For rectifiable curves, the $Q$-length is defined by approximation with polygonal curves. Since we will be only interested in $Q$-lengths of closed curves, we can shift $Q$ as we want: for instance, we can assume that $Q$ contains the origin in its interior (so that $\|\cdot\|_Q$ becomes an asymmetric norm). Shifting $Q$ can also simplify calculations. 

\begin{example}\label{ex:shift-origin}
Let us compute the (oriented) $Q$-perimeter of a (non-degenerate) triangle, $\len_Q(u \to v \to w \to u)$. The only information about $Q$ that we need is the positions of the three support lines $\ell_{u\to v}$, $\ell_{v\to w}$, $\ell_{w \to u}$ of $Q$ corresponding to the directions $v-u$, $w-v$, and $w-u$, respectively. This means that $\ell_{u\to v} = \{y \in \mathbb{R}^2 ~\vert~ \langle v-u, y \rangle = \|v-u\|_Q\}$, and similarly for $\ell_{v\to w}$ and $\ell_{w \to u}$. Since the vectors $w-v$ and $u-w$ are not parallel for a non-degenerate initial triangle, the lines $\ell_{v\to w}$ and $\ell_{w \to u}$ intersect, and we may choose a shift vector $t$ so that they intersect at the origin. Then we have
\[
\len_{Q}(u \to v \to w \to u) = \|v-u\|_{Q+t} + 0 + 0 = \dist(\ell_{v\to w} \cap\ell_{w \to u}, \ell_{u\to v}) \cdot |v - u|,
\]
where $\dist(\cdot,\cdot)$ denotes the euclidean distance between a point and a line, and $|v-u|$ denotes the euclidean length of $[u,v]$. We can use the right-hand side as a formula for $Q$-perimeter expressed just in terms of euclidean distances. A similar argument for a line segment $[u,v]$ shows that, for a shift vector $t \in \ell_{v\to u}$,
\[
\len_{Q}(u \to v \to u) = \|v-u\|_{Q+t} + 0= \dist(\ell_{v\to u}, \ell_{u\to v}) \cdot |v - u|,
\]
where $\dist(\ell_{v\to u}, \ell_{u\to v})$ essentially measures the euclidean width of $Q$ in the direction $v-u$.
\end{example}

From now on, let $Q$ be a convex polygon. Each side of $Q$ determines an \emph{outward normal direction}, in the sense that all outward normal vectors to this side coincide up to rescaling (multiplication by a positive number), and we refer to equivalence classes of nonzero vectors modulo rescaling as directions. 


\begin{definition}\label{def:normal} Let $Q \subset \mathbb{R}^2$ be a convex polygon containing the origin in its interior.
A nondegenerate triangle will be called \emph{$Q$-normal} if one of the two ways of orienting its boundary gives a polygonal curve $u \to v \to w \to u$ satisfying the following properties:
\begin{itemize}
    \item the directions of $u \to v$, $v \to w$, and $w \to u$ are outward normal directions of $Q$;
    \item the $Q$-perimeter is $1$:
    \[
    \len_Q(u \to v \to w \to u) = \|v-u\|_Q + \|w-v\|_Q + \|u-w\|_Q = 1.
    \]
\end{itemize}
A line segment $[u,v]$, viewed as a degenerate triangle, will be called \emph{$Q$-normal} if
\begin{itemize}
    \item the directions of $u \to v$ and $v \to u$ are outward normal directions of $Q$;
    \item its back-and-forth $Q$-length is $1$:
    \[
    \len_Q(u \to v \to u) = \|v-u\|_Q + \|u-v\|_Q = 1.
    \]
\end{itemize}
When we talk about \emph{$Q$-normal triangles}, we mean both degenerate and non-degenerate ones.
\end{definition}

The word `normal' reflects both the fact that these curves follow outward normal directions of $Q$ and the normalization of their lengths in $Q$-normed geometry.
Obviously, the property of being $Q$-normal is preserved under translations, so we will oftentimes speak about translation classes of $Q$-normal triangles.

\subsection{Reduction to triangle covers}  
For two sets $A,B \subset \mathbb{R}^2$, let us say that $A$ \emph{fits into} $B$ if $A + t \subset B$ for some translation vector $t \in \mathbb{R}^2$. If $\gamma : I \to \mathbb{R}^2$ is a curve, and the image of $\gamma$ in $\mathbb{R}^2$ fits into $B$, then we will simply say that the curve $\gamma$ fits into $B$. The main result of this section is as follows.

\begin{theorem}\label{thm:normal-cover}
If every $Q$-normal triangle fits into $K$, then every closed curve of unit $Q$-length fits into $K$. Consequently, every $Q$-normal triangle fits into $K$ if and only if $c(K\times Q)\ge 1$.
\end{theorem}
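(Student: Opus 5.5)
The plan is to use the equivalence (1)$\Leftrightarrow$(2)$\Leftrightarrow$(5) of \Cref{thm:capacity-bezdek}. It then suffices to show that if every $Q$-normal triangle fits into $K$, then every triangle $u\to v\to w\to u$ (possibly degenerate) of unit $Q$-perimeter fits into $K$. The second sentence of \Cref{thm:normal-cover} follows from the first, together with the trivial converse: every $Q$-normal triangle is a closed curve of unit $Q$-length. We may shift $Q$ so that it contains the origin in its interior, since this does not affect $Q$-lengths of closed curves.

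The basic tool is a convexity principle. For convex $A_1,\dots,A_m$ that each fit into $K$, say $A_k+t_k\subset K$, and weights $\lambda_k\ge0$ with $\sum\lambda_k=1$, the Minkowski combination $\sum\lambda_kA_k$ fits into $K$ via $t=\sum\lambda_kt_k$, by convexity of $K$. Also, a closed curve fits into $K$ if and only if its convex hull does. So the goal becomes: the convex hull of a given triangle $T$ of unit $Q$-perimeter is contained in a translate of $\sum\lambda_kT_k$, where the $T_k$ are $Q$-normal triangles and $\sum\lambda_k=1$.

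To build this decomposition I would proceed in three steps.

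\emph{Step 1.} Take an edge vector $e$ of $T$ and let $y$ be a vertex of $Q$ with $\langle e,y\rangle=\|e\|_Q$. Then $e$ lies in the normal cone $N_Q(y)$, so $e=\alpha n+\beta n'$ with $\alpha,\beta\ge0$, where $n,n'$ are outward normals of the two sides of $Q$ at $y$. Since $\|\cdot\|_Q$ is linear on $N_Q(y)$, we get $\|e\|_Q=\alpha\|n\|_Q+\beta\|n'\|_Q$. Replacing each edge by the two-segment path $\alpha n$ then $\beta n'$ gives a closed polygon $\gamma'$ with at most six edges, all in outward normal directions of $Q$. It has the same $Q$-length, and its convex hull contains $T$, because each replaced edge is the third side of a small triangle with the new segments.

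\emph{Step 2.} Group the edge vectors of $\gamma'$ by direction. Write the zero-sum family $\{a_jn_j\}$ ($a_j\ge0$, $n_j$ normals of $Q$) as a nonnegative sum of minimal zero-sum subfamilies. By planar Carathéodory, each minimal subfamily consists of two opposite normals or three normals positively spanning the plane. This is done by repeatedly subtracting a maximal multiple of such a minimal relation. Each minimal relation, suitably scaled, is the edge set of a degenerate or nondegenerate $Q$-normal triangle $T_k$. Because all vectors lie in normal directions of $Q$, $Q$-length is additive and positively homogeneous on them, so the scaling factors satisfy $\sum\lambda_k=\len_Q(\gamma')=1$. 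The Minkowski sum $P=\sum\lambda_kT_k$ is then the convex polygon whose edges are the vectors of $\gamma'$ sorted by angle.

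\emph{Step 3 (main obstacle).} Show that $\conv\gamma'$ is contained in a translate of $P$, the ``sorted-edge'' polygon. My first attempt would be a support-function comparison. Fix a starting vertex, so the vertices of $\gamma'$ are cyclic-interval partial sums of its edge vectors in their given order. For each direction $x$, compare $\max\langle x,\cdot\rangle$ over these partial sums with $h_P(x)$, where $P$ is based at the same point. The difficulty is that widths in every direction are automatically bounded by $\sum_j\langle x,a_jn_j\rangle_+$, whereas containment requires a single translation working for all $x$ simultaneously.

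If this general reordering claim proves awkward, I would fall back on exploiting the special structure here. The polygon $\gamma'$ comes from a triangle, and its edges already appear in angular order except for possible local swaps inside each replaced edge. So I would check directly that $\conv T\subset P$, for instance by verifying that each vertex of $T$ lies in $P$ with an explicit common translation. Alternatively, I would choose the splitting in Step 1 (which of the two normals to traverse first) so that $\gamma'$ is itself convex and already equal to $P$.
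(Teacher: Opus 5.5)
\textbf{There is a genuine gap in Step 3, and as you set it up it cannot be closed.} Your preliminary reductions are fine: restricting to triangles via \Cref{thm:capacity-bezdek}, the edge-splitting in Step 1, and the convex-combination principle (which plays the role of the paper's \Cref{lem:minkowski-slice}).

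\emph{The reordering claim is false.} Take $Q$ a triangle and let $\gamma'$ be a $Q$-normal triangle traversed clockwise. Its edge vectors sorted counterclockwise bound $-\gamma'$ up to translation, and a triangle does not fit into its central reflection. Since $T$ and $-T$ have the same width in every direction, no width or support-function comparison can detect this.

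\emph{The defect starts in Step 2.} For three normals of $Q$ that positively span the plane, there are \emph{two} $Q$-normal triangles with that set of edge vectors, namely translates of $T_k$ and $-T_k$. By grouping the edges of $\gamma'$ by direction, you discard exactly the information that says which one is needed.

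\emph{Your last fallback also fails.} Suppose two edges of $T$ lie in the interior of the same normal cone $\pos\{n,n'\}$ of $Q$. For example, take $Q$ a square and $T$ with edge vectors $(2,1),(1,2),(-3,-3)$. Then $n$ and $n'$ each occur twice along $\gamma'$, interleaved, and no choice of splitting order makes $\gamma'$ convex.

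\textbf{The missing idea is the one the paper uses: do not sort.} Peel minimal positive relations off the \emph{sequence} of edges of $\gamma'$, keeping parallel edges separate. Let each triangle $\gamma_k$ traverse its edges in the order in which they occur along $\gamma'$. Then $\gamma'$ is an interleaving of the $\gamma_k$ in the sense of \Cref{def:interleave}, that is, $\gamma'(t)=\sum_k\gamma_k(\theta_k(t))$. Hence every point of $\gamma'$ lies in the Minkowski sum of the $\gamma_k$. That sum is contained in a translate of $\sum_k\lambda_kT_{j(k)}$, and your convex-combination principle finishes the proof.

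This construction automatically selects the correct one of $\pm T_k$. It also works verbatim for any closed polygon with edges in normal directions, so the detour through triangles is not needed.

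Within your triangle setting, the argument could also be repaired, though the proposal does not do this. First, sort in the direction in which $T$ is traversed. Second, in the shared-cone case, write $e_i=\alpha_i n+\beta_i n'$ and check directly that $e_1\in\conv\{0,(\alpha_1+\alpha_2)n,e_1+e_2\}$. This amounts to $\beta_1\alpha_2\le\alpha_1\beta_2$, i.e.\ $e_1$ precedes $e_2$ in angular order inside the cone.
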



In view of \Cref{thm:normal-cover}, \Cref{conj:viterbo}, when stated for a fixed polygon $Q$, reduces to the following covering problem associated with $Q$.

\begin{problem}\label{prob:cover}
Let $Q \subset \mathbb{R}^2$ be a convex polygon. Let us say that a convex shape $K \subset \mathbb{R}^2$ is a \emph{$Q$-cover} if every $Q$-normal triangle fits into $K$. What is the smallest $Q$-cover, in terms of area? In other words: given a collection of $Q$-normal triangles, one from each translation class, we would like to translate each of them in order to minimize the area of the convex hull. Is it true that this area is at least $\frac{1}{2 \area(Q)}$?
\end{problem}

\begin{remark}
More generally, when $Q$ is convex but not a polygon, a natural generalization of \Cref{def:normal} is obtained if one considers all outward normal directions at \emph{smooth} points of $\partial Q$. The corresponding version of \Cref{prob:cover} is still equivalent to \Cref{conj:viterbo}, but there collection of (translation classes of) $Q$-normal triangles is now infinite, so the reformulation does not seem to be a simplification. For example, when $Q$ is a euclidean disk, the corresponding problem of the smallest $Q$-cover is just Wetzel's original worm problem.
\end{remark}

Now we turn to the proof of \Cref{thm:normal-cover}. The key idea in the proof is cutting a curve at several places and rearranging the pieces to make a few simpler curves. The way a longer curve is built out of shorter ones will be called \emph{interleaving}. 

\begin{definition}\label{def:interleave}
Let $\theta = (\theta_1, \ldots, \theta_k)$ be a collection of continuous non-decreasing functions $\theta_i : [0,k] \to [0,1]$ such that
\begin{itemize}
    \item each $\theta_i$ interpolates between $\theta_i(0) = 0$ and $\theta_i(k) = 1$ in a piecewise differentiable way;
    \item $\theta_i' = 0$ or $\theta_i' = 1$ everywhere except finitely many points;
    \item $\sum\limits_{i=1}^k \theta_i(t) = t$ for all $t \in [0,k]$. 
\end{itemize}
In other words, $[0,k]$ splits into finitely many intervals, and on each of them all but one of the $\theta_i$s are constant, and the remaining one grows with derivative $1$. 
Let us define the \emph{$\theta$-interleaving} of curves $\gamma_1, \ldots, \gamma_k : [0,1] \to \mathbb{R}^2$ as the curve $\gamma: [0,k] \to \mathbb{R}^2$ given by
\[
\gamma(t) = \sum_{i=1}^k \gamma_i(\theta_i(t)), \quad t \in [0,k].
\]
\end{definition}

\begin{figure}[ht]
    \centering
    \includegraphics[width=0.7\linewidth]{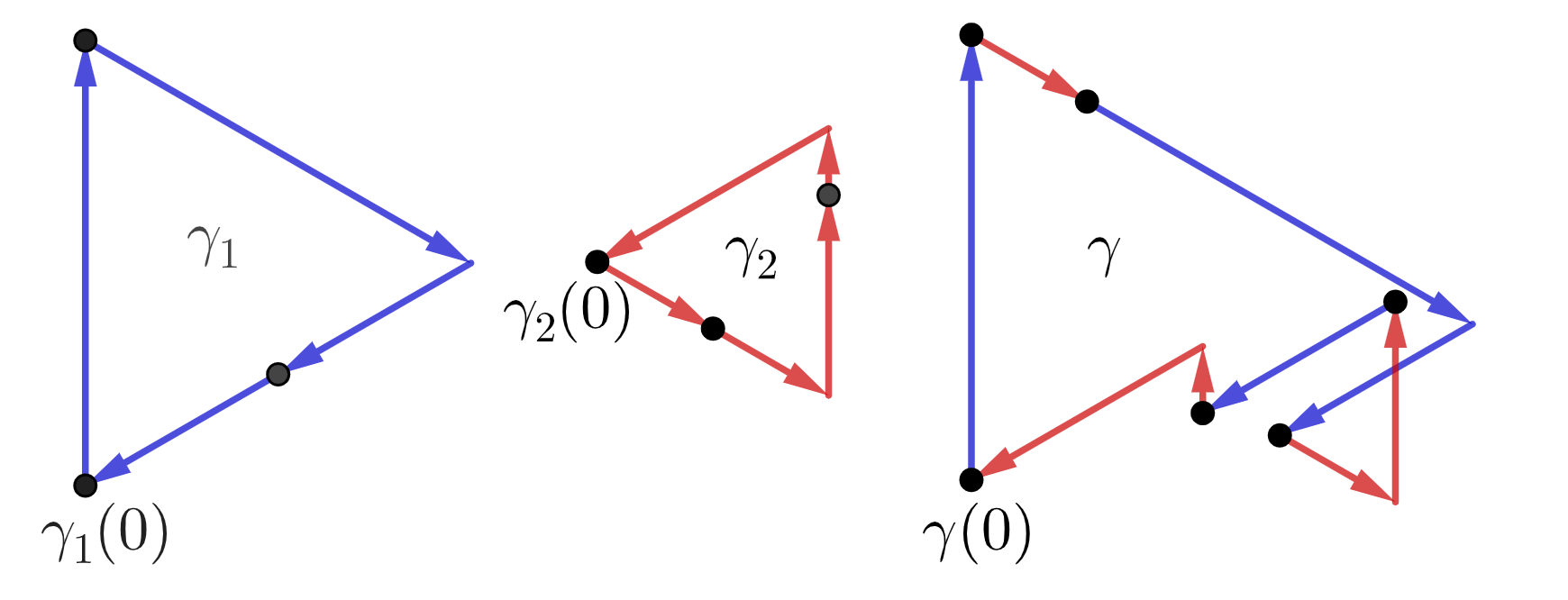}
    \caption{Two closed curves $\gamma_1$, $\gamma_2$, and their interleaving $\gamma$. Note that $\gamma_1$ and $\gamma_2$ are cut into three pieces each (at the black points), and then the pieces are interleaved to form $\gamma$. The order in which blue (resp. red) pieces appear in $\gamma$ is the same as in the original blue (resp. red) curve.}
    \label{fig:interleaving}
\end{figure}

Clearly, when all $\gamma_i$ are closed, their interleaving is closed too. The procedure of interleaving curves $\gamma_1, \ldots, \gamma_k$ can be described, up to reparametrization, as follows (see \Cref{fig:interleaving}). First, we cut $\gamma_1, \ldots, \gamma_k$ into several pieces. Then we form a new curve $\gamma$ by placing all these pieces one after another in some way, while preserving the relative order of the pieces coming from each individual curve $\gamma_i$. That is, for every $1 \le i \le k$, the pieces in $\gamma$ originating from $\gamma_i$ appear in $\gamma$ in the same order as they do along $\gamma_i$. From this description one can readily infer basic properties of interleaving such as `associativity up to reparametrization'. We state it for three curves (and implicitly use it later), but it can be stated analogously for any number of curves.

\begin{proposition}
Let $\gamma_1, \gamma_2, \gamma_3 : [0,1] \to \mathbb{R}^2$ be curves. Let $\gamma_{12} : [0,2] \to \mathbb{R}^2$ be an interleaving of $\gamma_1$ and $\gamma_2$, and denote by $\widetilde\gamma_{12} : [0,1] \to \mathbb{R}^2$ its reparametrization given by $\widetilde\gamma_{12}(t) = \gamma_{12}(2t)$. Then any interleaving of $\widetilde\gamma_{12}$ and $\gamma_3$ is, up to reparametrization, an interleaving of $\gamma_1, \gamma_2, \gamma_3$.
\end{proposition}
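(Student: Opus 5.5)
We construct explicit interleaving functions. Let $\theta = (\theta_1,\theta_2)$ be the interleaving functions on $[0,2]$ defining $\gamma_{12}(s) = \gamma_1(\theta_1(s)) + \gamma_2(\theta_2(s))$. Then $\widetilde\gamma_{12}(t) = \gamma_1(\theta_1(2t)) + \gamma_2(\theta_2(2t))$ for $t\in[0,1]$. Let $\varphi = (\varphi_1,\varphi_2)$ be the interleaving functions on $[0,2]$ for the interleaving $\gamma$ of $\widetilde\gamma_{12}$ and $\gamma_3$, so that
\[
\gamma(s) = \gamma_1\bigl(\theta_1(2\varphi_1(s))\bigr) + \gamma_2\bigl(\theta_2(2\varphi_1(s))\bigr) + \gamma_3(\varphi_2(s)), \quad s \in [0,2].
\]
The natural candidate triple is $\psi_1 = \theta_1\circ 2\varphi_1$, $\psi_2 = \theta_2 \circ 2\varphi_1$, $\psi_3 = \varphi_2$. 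These are continuous and non-decreasing, with $\psi_i(0)=0$ and $\psi_i(2)=1$. Their sum is $2\varphi_1 + \varphi_2$, however, not $s$, and on intervals where $\varphi_1$ grows, $\psi_1$ or $\psi_2$ has derivative $2$. This mismatch is exactly why the statement says ``up to reparametrization''.

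To fix it, define $\sigma(s) = 2\varphi_1(s) + \varphi_2(s)$, a map $[0,2]\to[0,3]$. It is continuous, piecewise linear, and strictly increasing, since its derivative is $2$ or $1$ except at finitely many points. So it is a homeomorphism onto $[0,3]$. Set $\Theta_i = \psi_i\circ\sigma^{-1}$ on $[0,3]$, and consider $\widehat\gamma(u) = \sum_{i=1}^3 \gamma_i(\Theta_i(u))$. Then $\widehat\gamma\circ\sigma = \gamma$, so $\gamma$ is a reparametrization of $\widehat\gamma$.

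We now check that $\Theta = (\Theta_1,\Theta_2,\Theta_3)$ satisfies \Cref{def:interleave} with $k=3$.

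First, $\sum_i \Theta_i(\sigma(s)) = \theta_1(2\varphi_1(s)) + \theta_2(2\varphi_1(s)) + \varphi_2(s) = 2\varphi_1(s) + \varphi_2(s) = \sigma(s)$, using $\theta_1+\theta_2 = \mathrm{id}$. Hence $\sum_i\Theta_i(u) = u$.

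Second, for the derivative condition, refine $[0,2]$ into finitely many intervals on each of which $\varphi_1$ or $\varphi_2$ is the growing one.

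On an interval where $\varphi_2$ grows, $\sigma' = 1$. There $\Theta_3$ has derivative $1$ and $\Theta_1,\Theta_2$ are constant.

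On an interval where $\varphi_1$ grows, $\sigma' = 2$, and $2\varphi_1$ runs with speed $2$. Subdivide further according to the finitely many breakpoints of $\theta$, pulled back by $2\varphi_1$. On each piece, exactly one of $\theta_1,\theta_2$ has slope $1$. The corresponding $\psi_i$ then has slope $2$, which is divided by $\sigma'=2$, giving $\Theta_i' = 1$; the other two $\Theta_j$ are constant.

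Only finitely many breakpoints arise, and piecewise differentiability follows. Endpoint values are $\Theta_i(0)=0$ and $\Theta_i(3)=1$.

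The only real subtlety is the rescaling bookkeeping: checking that the slope-$2$ pieces become slope $1$ after reparametrization by $\sigma$, and that the breakpoints stay finite. Both are handled by the refinement just described.
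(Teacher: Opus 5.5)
Your proof is correct and complete. The time change $\sigma = 2\varphi_1+\varphi_2$ is a piecewise linear increasing homeomorphism $[0,2]\to[0,3]$, and the identity $\theta_1+\theta_2=\mathrm{id}$ gives $\sum_i \Theta_i = \mathrm{id}$. The slope-$2$ pieces of $\psi_1,\psi_2$ are exactly compensated by $\sigma'=2$. Breakpoints stay finite because $2\varphi_1$ is injective on each interval where $\varphi_1$ grows, and on intervals where $\varphi_1$ is constant, $\psi_1,\psi_2$ are constant anyway.

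The paper does not write out a proof. It says that associativity ``can be readily inferred'' from the informal cut-and-paste description of interleaving: cut each $\gamma_i$ into pieces, then concatenate all the pieces while preserving the internal order of the pieces of each $\gamma_i$. In that picture, an interleaving of $\widetilde\gamma_{12}$ and $\gamma_3$ cuts $\widetilde\gamma_{12}$ into pieces that are themselves order-preserving concatenations of pieces of $\gamma_1$ and $\gamma_2$. The result is therefore a concatenation of pieces of all three curves, with each curve's order preserved. That route is quicker and makes the extension to any number of curves and any bracketing evident. However, it hides both the translations of the pieces and the parametrization.

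Your version works directly from the formula $\gamma(t)=\sum_i\gamma_i(\theta_i(t))$ of \Cref{def:interleave}. There the translations are automatic, and ``up to reparametrization'' becomes a precise statement. The reparametrization is exactly the bookkeeping that one unit of $\widetilde\gamma_{12}$-time accounts for two units of total time. Your construction also generalizes verbatim to nested interleavings: weight the clock of each block by the number of original curves it contains.
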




For the main proof of this section, it will be useful to relate Minkowski sums to convex hulls as follows.

\begin{lemma}\label{lem:minkowski-slice}
Let $A_i \subset \mathbb{R}^2$, $1\le i \le k$, be arbitrary sets, and let $\lambda_i \ge 0$ be coefficients of a convex combination, $\sum\limits_{i=1}^k \lambda_i = 1$. Then the Minkowski sum $\sum\limits_{i=1}^k \lambda_i A_i$ fits into the convex hull $\conv\left(\bigcup\limits_{i=1}^k A_i \right)$.
\end{lemma}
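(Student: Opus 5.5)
We prove \Cref{lem:minkowski-slice} by exhibiting an explicit translation vector built from the centroids of the sets $A_i$.

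First reduce to the case where every $A_i$ is nonempty. If some $A_i$ with $\lambda_i>0$ is empty, the Minkowski sum is empty and fits trivially. Sets with $\lambda_i=0$ contribute $0\cdot A_i=\{0\}$ to the sum (for nonempty $A_i$), and they only enlarge the convex hull. So we may discard them and assume all $\lambda_i>0$ and all $A_i\neq\emptyset$.

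Now fix a point $a_i\in A_i$ for each $i$. We look for a translation $t$ such that $\sum_i \lambda_i A_i + t \subset \conv\bigl(\bigcup_i A_i\bigr)$. Take an arbitrary element $x=\sum_i \lambda_i x_i$ with $x_i\in A_i$. Note that $x$ itself is already a convex combination of points of $\bigcup_i A_i$, so $x\in\conv\bigl(\bigcup_i A_i\bigr)$. Hence $t=0$ works, and the Minkowski combination lies inside the convex hull without any translation.

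There is essentially no obstacle here. The only subtlety is the convention $0\cdot A=\{0\}$ when some $\lambda_i=0$. It is handled by dropping those indices, which can only shrink the hull we need to land in, so the inclusion for the remaining indices implies the original one. The statement's phrasing ``fits into'' is weaker than what we get, which is the inclusion itself with $t=0$.
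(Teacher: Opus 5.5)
Your argument is correct and is essentially the paper's: both note that each point $\sum_i \lambda_i x_i$ is itself a convex combination of points of $\bigcup_i A_i$, so the inclusion holds with zero translation. The paper phrases this through the join, taking $x_i \in \conv A_i$. Your opening mention of centroids and the fixed points $a_i$ are never used and can be dropped.
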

\begin{proof}
    The convex hull $\conv\left(\bigcup\limits_{i=1}^k A_i \right)$ can be viewed as the geometric join 
    \[
    A_1 * \ldots * A_k = \left\{\sum_{i=1}^k \mu_i x_i ~\middle\vert~ x_i \in \conv A_i, \mu_i \ge 0, \sum_{i=1}^k \mu_i = 1\right\}.
    \]
    As a subset of this, if we fix $\mu_i = \lambda_i$, we have the set $\sum\limits_{i=1}^k \lambda_i \conv A_i$, containing $\sum\limits_{i=1}^k \lambda_i A_i$. 
\end{proof}

\begin{proof}[Proof of \Cref{thm:normal-cover}]

Suppose we are given $Q$-normal triangles $T_1, \ldots, T_J \subset \mathbb{R}^2$, such that all translation classes of $Q$-normal triangles are present among them. Suppose also that we have a closed curve $\gamma$ of $Q$-length $1$. We need to argue that $\gamma$ fits into $K = \conv\left(\bigcup\limits_{j=1}^J T_j \right)$. 

\underline{Step 1}. First, we reduce to the case of polygonal curves in a straightforward way. By the definition of the $Q$-length of a general rectifiable curve $\gamma$, there exists a sequence of closed polygonal curves $\gamma^{(i)}$ such that the lengths $\len_Q(\gamma^{(i)})$ increase and tend to $\len_Q(\gamma) = 1$; and moreover, the convex hulls $\conv (\gamma^{(i)})$ converge in the Hausdorff metric to the convex hull $\conv (\gamma)$. Assuming the result has been proved for polygonal curves, we obtain vectors $t_i \in \mathbb{R}^2$ such that $\conv (\gamma^{(i)}) + t^{(i)} \subset K$. Since the sequence $(t^{(i)})$ is bounded, after passing to a subsequence we may assume that it converges to some vector $t \in \mathbb{R}^2$. Since $K$ is closed, we can pass to the Hausdorff limit in the inclusion $\conv (\gamma^{(i)}) + t^{(i)} \subset K$ and conclude that $\gamma$ fits into $K$. This reduces the case of general curves to the case of polygonal curves.

\underline{Step 2}. Now we reduce to the case of a curve only following outward normal directions of $Q$. We will replace $\gamma$ with such a curve, preserving the $Q$-length and increasing the convex hull.

Consider a line segment $u \to v$ of $\gamma$. Either it already follows an outward normal direction of $Q$, and there is nothing to be changed, or the vector $v-u$ belongs to the interior of the outward normal cone of $Q$ at some vertex. Let $\pos\{\nu, \xi \}$ be this cone, with $\nu$ and $\xi$ being outward normals of $Q$. Write $v-u = \alpha \nu + \beta \xi$ for some $\alpha, \beta > 0$. Replace the segment $u \to v$ in $\gamma$ with a two-segment curve $u \to u + \alpha \nu = v - \beta \xi  \to v$. This does not change the $Q$-length of the curve, since the functional $\|\cdot\|_Q$ is linear on $\pos\{\nu,\xi\}$.

Repeating this procedure over all segments of $\gamma$, complete this step. The resulting curve will be still called $\gamma$.

\underline{Step 3}. Now we will represent $\gamma$ as an interleaving of several curves $\gamma_1, \ldots, \gamma_M$, such that 
\begin{itemize}
    \item $\len_Q (\gamma_i) = \lambda_i$, $\sum\limits_{i=1}^M \lambda_i = 1 = \len_Q (\gamma)$;
    \item each $\gamma_i$ is triangle (perhaps, degenerate) tracing the boundary of a $\lambda_i$-rescaled copy of $T_{j(i)}$, for some $1 \le j(i) \le J$.
\end{itemize}

Let $\gamma = (v_1 \to \ldots \to v_m \to v_{m+1} = v_1)$; let $\nu_i = v_{i+1} - v_i$ be the corresponding outward normal vectors of $Q$, so that $\sum\limits_{i=1}^{m} \nu_i = 0$. Choose any inclusion-minimal subcollection of $\nu_i$ that admits a linear dependency with positive coefficients; by Carathéodory's theorem, it consists of two or three vectors. Let $\alpha_1 \nu_{i_1} + \ldots + \alpha_\ell \nu_{i_\ell} = 0$, $\ell=2$ or $3$, be this positive dependency. Assume that $\alpha_{1}$ is the largest coefficient. Now we can represent $\gamma$ as the interleaving of the following two curves.
\begin{enumerate}
    \item The curve $\gamma_1$ with $\ell$ segments is defined either as
    \[
    0 \to \nu_{i_1} \to \nu_{i_1} + \frac{\alpha_2}{\alpha_1} \nu_{i_2}= 0, \quad \text{if } \ell=2,
    \]
    or as
    \[
    0 \to \nu_{i_1} \to \nu_{i_1} + \frac{\alpha_2}{\alpha_1} \nu_{i_2} \to  \nu_{i_1} + \frac{\alpha_2}{\alpha_1} \nu_{i_2} + \frac{\alpha_3}{\alpha_1} \nu_{i_3} = 0, \quad \text{if } \ell=3.
    \]
    Note that $\gamma_1$ is a triangle (degenerate if $\ell=2$), and let $j(1)$ be its ``type'', that is, the index such that the $Q$-normal triangle $T_{j(1)}$ has the same shape up to rescaling by the factor of $\lambda_1 \coloneq \len_Q(\gamma_1)$.

    \item The curve $\gamma'$ can be read off from the following linear dependency with non-negative coefficients, obtained by subtracting $\frac{1}{\alpha_1}(\alpha_1 \nu_{i_1} + \ldots + \alpha_\ell \nu_{i_\ell}) = 0$ from $\sum\limits_{i=1}^{m} \nu_i = 0$:
    \[
    \sum_{i=1}^{m} \beta_i \nu_i = 0, \quad \text{where } \beta_i = 
    \begin{cases}
        1, & \text{if } i \notin \{i_1,\ldots, i_{\ell}\}; \\
        1 - \frac{\alpha_k}{\alpha_{1}}, & \text{if } i = i_k \text{ for some $k = 1,\ldots,\ell$}.
    \end{cases}
    \]
    The curve $\gamma'$ can now be defined as
    \[
    v_1 \to v_1 + \beta_1 \nu_1 \to v_1 + \beta_1 \nu_1 + \beta_2 \nu_2 \to \ldots \to v_1 + \sum_{i=1}^{m-1} \beta_i \nu_i \to v_1 + \sum_{i=1}^{m} \beta_i \nu_i = v_1.
    \]
    Note that at least one part of this polygonal curve collapses ($\beta_{i_{1}}=0$), so $\gamma'$ has fewer segments than $\gamma$.
\end{enumerate}
It is straightforward to see that the line segments of $\gamma_1$ and $\gamma'$ can be interleaved to obtain $\gamma$; in particular, $1 = \len_Q(\gamma) = \lambda_1 + \len_Q(\gamma')$. Hence, $\gamma$ is an interleaving of $\gamma_1$ and $\gamma'$. If $\lambda_1 = 1$, then $\gamma'$ is a trivial (stationary) curve, and we are done. Otherwise we can apply the same procedure to $\gamma'$, representing it as an interleaving of a triangle $\gamma_2$, similar to $T_{j(2)}$, and a curve $\gamma''$ that has fewer segments than $\gamma'$. Eventually we will decompose $\gamma$ into interleaving triangular curves $\gamma_1, \ldots, \gamma_M$.

Finally, since $\gamma_i$ fits into $\lambda_i T_{j(i)}$, it follows trivially from the definition of interleaving that $\gamma$ fits into $\sum\limits_{i=1}^M \lambda_i T_{j(i)}$.
Then by \Cref{lem:minkowski-slice}, $\gamma$ fits into $\conv\left(\bigcup\limits_{i=1}^M T_{j(i)} \right) \subset K$. This finishes the proof.
\end{proof}

\section{A warm-up: triangular norm}\label{sec:triangular-covering}

In this section we consider \Cref{prob:cover} in the case where $Q=\triangle$ is a triangle. This problem is trivial, and the answer can be found by a very elementary case analysis, but we use it as a playground to introduce some tricks that will help us later.

The first observation we make is that both volume and symplectic capacity are invariant under translations of $K$ or $Q$, as well as linear transformations
\[
K \times Q \mapsto AK \times (A^T)^{-1} Q.
\]
Any triangle is affinely equivalent to a regular one with unit area; therefore, we can assume that $\triangle$ is a regular triangle of unit width (unit euclidean distance between a vertex and its opposite side).

The $\triangle$-normed triangles come in two translation classes that differ by a rotation by $\pi$. Fix two model $\triangle$-normed triangles $T_y^0$ and $T_o^0$; we refer to them as `yellow' and `orange', in agreement with \Cref{fig:tri1}. A simple computation (\Cref{ex:shift-origin}) shows that they have unit (euclidean) side length. Denote by $T_y(t_y) = t_y + T_y^0$ the sliding yellow triangle, depending on a shift parameter $t_y \in \mathbb{R}^2$. Similarly, let $T_o(t_o) = t_o + T_o^0$ be the sliding orange triangle. The problem of the smallest $Q$-cover is equivalent to minimizing the area of $K \coloneq\conv (T_y(t_y) \cup T_o(t_o))$.

\begin{figure}[ht]
    \centering
    \includegraphics[width=\linewidth]{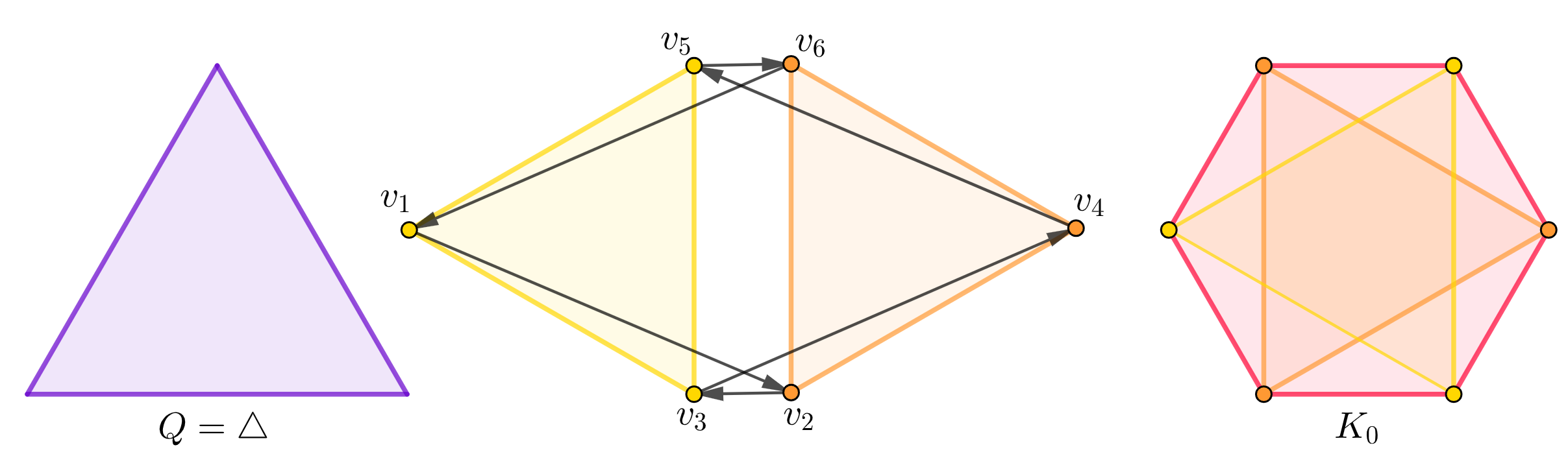}
    \caption{$\triangle$-normed triangles define the curve $\Gamma$ (middle), and in the most symmetric placement their convex hull is the regular hexagon $K_0$ (right).}
    \label{fig:tri1}
\end{figure}

We will prove with several different methods a sharp inequality $\area K \ge \area K_0$, where $K_0$ is the regular hexagon for the most symmetric placement of normal triangles. We will also see that $K_0$ belongs to a two-dimensional family of equality cases. The first method is the one that matters in the sequel, as it is particularly well suited to the study of equality cases in the Viterbo inequality in the plane. All three methods generalize to higher dimensions in one way or another. 

\subsection{First method: enclosing constant area}

Given two vectors $s = (s_1,s_2), y = (t_1,t_2)$ in the plane, we write $s \wedge t$ to denote the oriented area of the parallelogram spanned by $s$ and $t$:
\[
s \wedge t = \det \begin{pmatrix}
    s_1 & t_1 \\
    s_2 & t_2 
\end{pmatrix}.
\]

\begin{definition}\label{def:action}
Given a closed polygonal line $\gamma = (v_1 \to \ldots \to v_m \to v_{m+1}=v_1)$ in the plane, we introduce its \emph{enclosed area} by the formula
\[
\act(\gamma) = \frac12 \sum_{i=1}^m v_i \wedge v_{i+1}.
\]
\end{definition}

The geometric meaning behind this definition is as follows. The curve $\gamma$ splits the plane into regions $D_i$. Let $w_i \in \mathbb{Z}$ be the winding number of the curve $\gamma$ around $D_i$ (it can be negative). Then $\act(\gamma) = \sum\limits_i w_i \area D_i$. Another equivalent way of defining the enclosed area is via the line integral $\frac12 \int\limits_\gamma (x \,dy - y\, dx)$, which is connected to area by Green's formula. Note that $\act(\gamma)$ is invariant under translations of $\gamma$, and that it changes its sign if we reverse the orientation of $\gamma$.

There are two steps in this method. First, we find a closed polygonal curve $\Gamma \subset K$ connecting some of the points $v_i$ is a certain order, such that $\act(\Gamma)$ equals a positive constant $A$ that does not depend on the shift parameters $t_y$ and $t_o$. Second, we bound $\act(\Gamma) \le \area K$. If this inequality becomes an equality for certain values of the shift parameters, then the bound $\area K \ge A$ is sharp.

An appropriate curve $\Gamma$ for the first step is depicted in \Cref{fig:tri1}; the vertices of $T_y(t_y)$ are labeled $v_1(t_y),v_3(t_y),v_5(t_y)$, and the vertices of  $T_o(t_o)$ are labeled $v_2(t_o), v_4(t_o), v_6(t_o)$, as in the figure. Let us check that $\act(\Gamma(t_y,t_o))$ is a constant function. First, we compute its variation with respect to $t_y$:

\begin{align*}
\act(\Gamma(t_y+s,t_o)) - \act(\Gamma(t_y,t_o))
&= \bigl(v_1(t_y+s)\wedge v_2(t_o) - v_1(t_y)\wedge v_2(t_o)\bigr)
   + \text{five similar terms} \\
&= \sum_{k\in\{2,4,6\}}
   \bigl(s\wedge v_{k}(t_o) + v_{k}(t_o)\wedge s\bigr) \\
&= 0.
\end{align*}

Similarly, the variation with respect to $t_o$ is zero as well. Since this computation is valid at any $(t_y,t_o)$, $\act(\Gamma(t_y,t_o))$ is constant. The value $A$ of the constant can be computed if we choose the values of the shift parameters so that $K = K_0$ is a regular hexagon. A simple direct calculation shows that $A = \frac{1}{2 \area Q}$, so that the inequality we proved is exactly \Cref{conj:viterbo} in the case where $Q$ is a triangle.

At the second step, we want to make sure that $\act(\Gamma) \le \area \conv (T_y(t_y) \cup T_o(t_o))$. It suffices to check that the winding number of $\Gamma$ around any point never exceeds $1$. It should be easy for the reader to convince themselves this is true simply by dragging the triangles around and observing the shape of $\Gamma$. We will give an overkill argument (\Cref{lem:croissant}) that proves it for a large class of curves. This lemma will come especially handy in \Cref{sec:quadrilateral-covering}. In our current simple example, it concludes the proof if we set 
$a_i = v_i$, $b_i = v_{8-i}$, for $i = 1,2,3,4$ (with $v_7 = v_1$ by convention). The assumption of the lemma is satisfied because $a_1 - b_1 = 0 = a_4 - b_4$, and $a_2 - b_2 = v_2 - v_6 = v_3 - v_5 = a_3 - b_3$.

\begin{lemma}\label{lem:croissant}
    Let $\Gamma = (a_1 \to \ldots \to a_m \to b_m \to \ldots b_1 \to a_1)$ be a closed polygonal curve in the plane with the following properties:
    \begin{itemize}
        \item the vectors $a_i - b_i$ have the same direction for all $i = 1,\ldots,m$ (it is acceptable if $a_i=b_i$ for some $i$);
        \item the euclidean lengths $|a_i - b_i|$ form a unimodal sequence of numbers, that is, there is $j$ such that this sequence is non-decreasing for $1\le i \le j$ and non-increasing for $j \le i \le m$.
    \end{itemize}
    Then $\act(\Gamma) \le \area(\conv(\Gamma))$.
\end{lemma}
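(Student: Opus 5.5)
Since $\act(\Gamma)=\sum_i w_i\area D_i$, where $w_i$ is the winding number of $\Gamma$ around the region $D_i$, and every $D_i$ with $w_i\neq 0$ lies in $\conv(\Gamma)$, it suffices to show that the winding number of $\Gamma$ around every point $x\notin\Gamma$ is at most $1$. Negative winding numbers only help, so only this upper bound matters. The plan is to rotate coordinates so that all vectors $a_i-b_i$ point vertically upward, say $a_i-b_i=h_i e$ with $e=(0,1)$ and $h_i\ge 0$, and then compute winding numbers by counting signed crossings of a vertical ray.

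The key step is a ``slicing'' picture. For each $i$ let $S_i=[b_i,a_i]$, a vertical segment (possibly a point) of height $h_i$. The curve $\Gamma$ is the boundary of the union of the quadrilaterals $P_i=\conv(a_i,a_{i+1},b_{i+1},b_i)$, $i=1,\dots,m-1$, glued along the segments $S_i$. More precisely, $\Gamma$ is homologous, as a $1$-cycle, to $\sum_{i=1}^{m-1}\partial P_i$, where each $\partial P_i$ is oriented as $a_i\to a_{i+1}\to b_{i+1}\to b_i\to a_i$. The interior segments $S_2,\dots,S_{m-1}$ cancel in pairs, and $S_1,S_m$ appear as they do in $\Gamma$. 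Hence $\mathrm{wind}(\Gamma,x)=\sum_i \mathrm{wind}(\partial P_i,x)$, and each $\partial P_i$ is a closed quadrilateral curve with two parallel vertical sides, so its winding number is $\pm1$ or $0$ off the curve, or possibly self-crossing. I would first reduce to the generic case by perturbing and using continuity of $\act$ and of the area of the convex hull.

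The main obstacle is to show that these contributions cannot stack up to $2$ or more. Here the unimodality enters. Take an upward vertical ray from $x$ and count signed crossings with $\Gamma$. I would rather parametrize by ``time'' $i$. The lower chain $b_1\to\dots\to b_m$ and the upper chain $a_1\to\dots\to a_m$ are related by a vertical offset $h(s)$ that is piecewise linear and unimodal in $s\in[1,m]$, if we interpolate $a(s)=b(s)+h(s)e$. Consider the map $F(s,\tau)=b(s)+\tau h(s)e$ on the region $R=\{(s,\tau):1\le s\le m,\ 0\le\tau\le1\}$. Its boundary maps to $\Gamma$, up to degenerate vertical pieces at $s=1,m$, where $h$ may be $0$. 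Then $\mathrm{wind}(\Gamma,x)=\deg(F,x)$, the signed count of preimages of $x$ in $R$.

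For a fixed horizontal coordinate of $x$, the preimages come from parameters $s$ with $b(s)_1=x_1$. The sign of each preimage is the sign of the Jacobian $h(s)\,b'(s)_1$ (with respect to $\tau$ and $s$), so it is just the direction of horizontal motion of the chain at $s$. Preimages must satisfy $b(s)_2\le x_2\le b(s)_2+h(s)$. I would then argue that among consecutive crossings of the vertical line $\{x_1\}$ by the path $s\mapsto F(s,\cdot)$, the signs alternate. Between two consecutive positive crossings $s_1<s_3$ there is a negative crossing $s_2$, and I need to show that $x$ lies in the slice at $s_2$ whenever it lies in the slices at both $s_1$ and $s_3$. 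Unimodality gives $h(s_2)\ge\min(h(s_1),h(s_3))$, but the vertical positions $b(s)_2$ also vary, so this step is not immediate. I expect it to need the additional planar observation that the lower chain's crossing points with the vertical line are interlaced with the upper chain's, which is forced by planarity of the strip swept by segments $F(s,[0,1])$ of unimodal length. If that argument fails, a fallback is induction on $m$: remove an endpoint quadrilateral $P_1$, where $h$ is smallest on its side by unimodality, and check that adding it raises the winding number only on points not already covered with winding number $1$.
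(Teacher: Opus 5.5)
\textbf{There is a genuine gap: the intermediate claim your proposal rests on is false.} You reduce the lemma to showing that $\Gamma$ winds at most once around every point. Under the hypotheses of the lemma this is false, so neither the crossing-alternation argument nor the fallback induction, which aims at the same pointwise bound, can be completed.

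The hypotheses constrain only the vectors $a_i-b_i$, not the position of the lower chain. Hence the strip swept by the segments $[b_i,a_i]$ can overlap itself with the same orientation; the ``planarity'' you hope is forced is not. Concretely, take $a_i=b_i+(0,1)$ for all $i$, so both hypotheses hold trivially, with
$b_1=(1,0)$, $b_2=(-1,0)$, $b_3=(-1,-3)$, $b_4=(2,-3)$, $b_5=(2,\tfrac12)$, $b_6=(-1,\tfrac12)$.
The upward ray from $x=(0,0.9)$ meets $\Gamma$ only on the edges $a_1a_2$ (at height $1$) and $a_5a_6$ (at height $3/2$). Both are traversed right to left, each contributing $+1$, so $\Gamma$ winds twice around $x$. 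The lemma still holds here: the shoelace formula gives $\act(\Gamma)=2$, far below the area of the hull. But it does not hold for the reason you propose.

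\textbf{The missing idea is the paper's preliminary Steiner symmetrization.} Slide each segment $[b_i,a_i]$ along its own direction until its midpoint lies on a fixed line $L$ orthogonal to the common direction.
\begin{itemize}
\item This leaves $\act(\Gamma)$ unchanged. Moving $[b_i,a_i]$ by $s$ changes $\act$ by $\frac12\, s\wedge\bigl((b_{i-1}-a_{i-1})-(b_{i+1}-a_{i+1})\bigr)=0$, since all these vectors are parallel. Equivalently, $\act(\Gamma)$ depends only on the coordinates along $L$ and on the heights.
\item It does not increase $\area(\conv(\Gamma))$. To see this, compare, on each line perpendicular to $L$, the chord of the new hull with the chords cut out by the trapezoids $\conv(a_i,a_j,b_j,b_i)$.
\end{itemize}

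After symmetrization, with $L$ as the horizontal axis, a point $x$ lies in the slice $F(s,[0,1])$ iff $b(s)_1=x_1$ and $h(s)\ge 2|x_2|$. By unimodality, $\{s : h(s)\ge 2|x_2|\}$ is an interval. Along an interval the signed crossings of the vertical line through $x$ alternate, so the winding number lies in $\{-1,0,1\}$. Your observation $h(s_2)\ge\min(h(s_1),h(s_3))$ is exactly what closes this argument, but only once the vertical positions have been normalized in this way.
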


\begin{proof}
Note that this lemma is not obvious by default, and it fails for general curves. For example, if $\Gamma$ winds around a convex polygon $N$ times then $\act(\Gamma) = N \cdot \area(\conv(\Gamma))$.

Consider the Steiner symmetrization with respect to a line $L$ orthogonal to all $a_i - b_i$. For each $i$, it slides the points $a_i$ and $b_i$ along the line they belong to until the moment when the midpoint $m_i = \frac{a_i+b_i}{2}$ ends up on $L$. This procedure has two area-related properties.
\begin{itemize}
    \item The symmetrization does not change $\act(\Gamma)$, as it is easy to check directly from the definition. For $1 < i < m$, sliding $[a_i, b_i]$ by a vector $s \parallel b_i-a_i$ changes $\act(\Gamma)$ by the amount of $a_{i-1} \wedge s + s \wedge a_{i+1} + b_{i+1} \wedge s + s \wedge b_{i-1} = s \wedge (b_{i-1} - a_{i-1}) - s \wedge (b_{i+1} - a_{i+1}) = 0$. For $i=1$ or $i=m$ the computation is even simpler.
    \item The symmetrization can only decrease $\area(\conv(\Gamma))$. This can be seen by cutting $\conv(\Gamma)$ into trapezoids along the lines spanned by the pairs $(a_i,b_i)$, and observing how they transform. Here are the details of this argument. Let $\widetilde{a}_i$ and $\widetilde{b}_i$ denote the images of $a_i$ and $b_i$ after the symmetrization, and let $\widetilde{\Gamma}$ be the symmetrized curve. Let $\ell$ be a line perpendicular to $L$. The convex hull $\conv(\widetilde{\Gamma})$ is the union of all trapezoids $\widetilde{a}_i \widetilde{a}_j \widetilde{b}_j \widetilde{b}_i$. Each of these trapezoids either does not meet $\ell$, or intersects $\ell$ in a segment whose midpoint lies on $L$. Among these segments, the longest one contains all the others. Consider the corresponding trapezoid; before symmetrization it cut out on $\ell$ a segment of the same length, but the intersection $\ell \cap \conv(\Gamma)$ could have been longer. Thus the symmetrization can only decrease the length of the intersection $\ell \cap \conv(\Gamma)$. Applying the same argument to every line $\ell$, we conclude that the symmetrization can only decrease $\area(\conv(\Gamma))$.
\end{itemize}

Therefore, it suffices prove the lemma for $L$-symmetric curves. In \Cref{fig:croissant} one can see an example, where only one half of such a curve is depicted. The curve splits the half-plane into regions, which are labeled in \Cref{fig:croissant} with the multiplicity they contribute in $\act(\Gamma)$. These labels are equal to winding numbers of $\Gamma$ around regions. 

\begin{figure}[ht]
    \centering
    \includegraphics[width=0.7\textwidth]{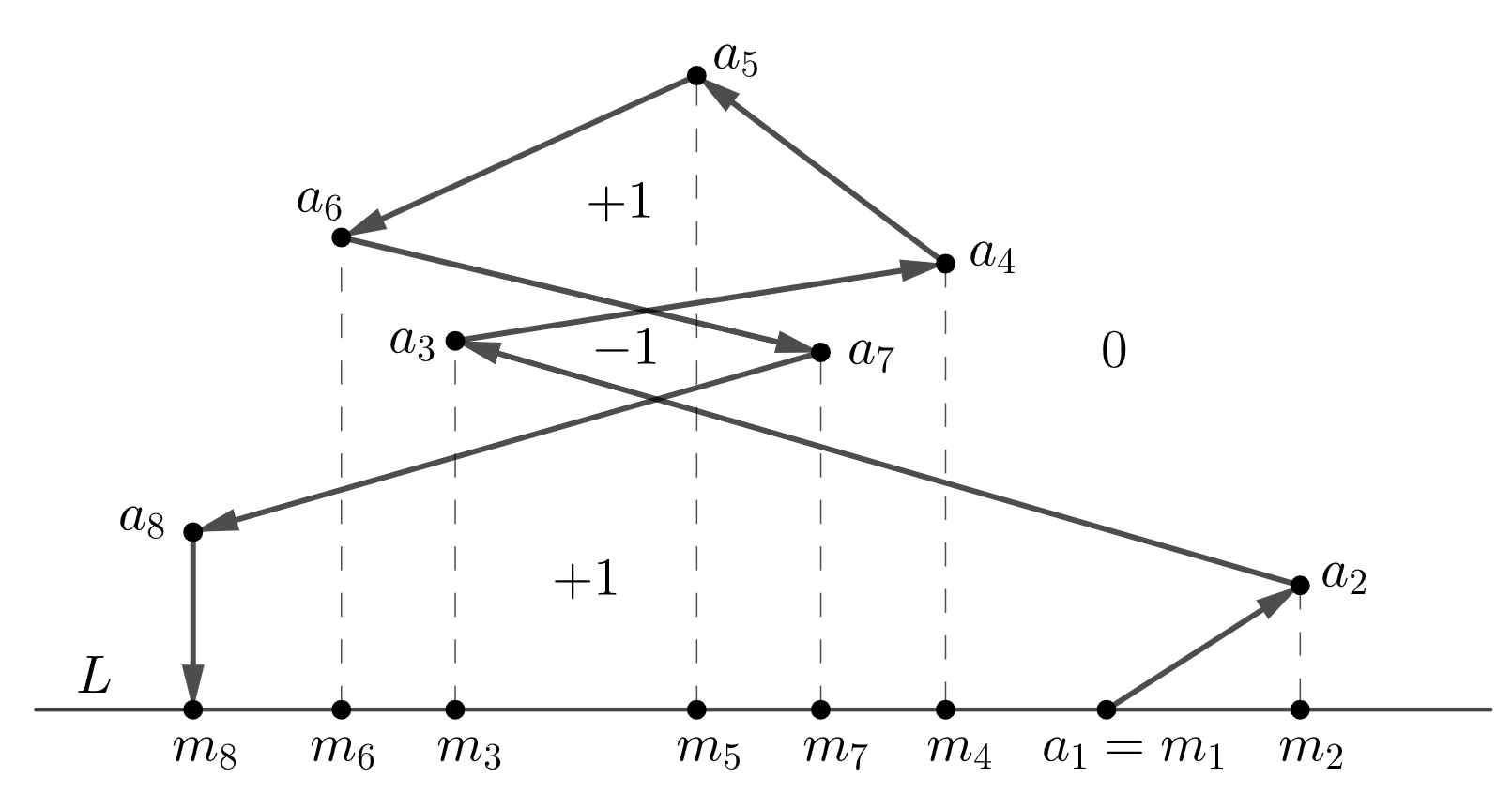}
    \caption{Half of the symmetrized curve $\Gamma$}
    \label{fig:croissant}
\end{figure}

Let us prove that no winding number can be greater than $+1$; this is enough to conclude the proof. Suppose the contrary: there is a point $p$ in the halfplane around which $\Gamma$ winds more than once. This would mean that there exist indices $i<j<k$ such that the segments $[a_i,a_{i+1}]$ and $[a_k,a_{k+1}]$ pass above $p$ from right to left, while $[a_j,a_{j+1}]$ passes below $p$ from left to right (here the words `left', `right', `above', `below' are understood as if $L$ is horizontal, as in \Cref{fig:croissant}). But this contradicts the unimodality assumption.
\end{proof}

\subsection{Second method: covering the plane by lattice translates}

This method generalizes well to higher dimensions: it was used in \cite{balitskiy2020equality} to prove \Cref{conj:viterbo} in the case when $Q$ is a simplex in $\mathbb{R}^n$ (and $K \subset \mathbb{R}^n$ is any convex body). We sketch it very briefly, since it will not be used in the rest of the paper.

As before, let $K = \conv (T_y(t_y) \cup T_o(t_o))$. Consider the lattice $\Lambda$ spanned by the sides of $T_y$ and $T_o$. Note that the plane admits a triangulation such that its vertex set is $\Lambda$, and every triangular cell of it is a $\triangle$-normal triangle. The key idea is to prove that the translates $K + \lambda$, $\lambda \in \Lambda$, cover $\mathbb{R}^2$. To show that, suppose there exists $x \notin K + \Lambda$. Then $(K - x) \cap \Lambda = \varnothing$, which contradicts the following lemma proven in \cite{balitskiy2020equality} by a simple topological argument.

\begin{lemma}[Planar case of {\cite[Lemma~5.3]{balitskiy2020equality}}]\label{lem:caging}
Suppose we are given a triangulation of the plane such that every triangular cell of it fits into a convex shape $K$. Then any translate of $K$ meets the vertex set of the triangulation.
\end{lemma}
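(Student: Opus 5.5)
We need to prove Lemma \ref{lem:caging}: given a triangulation of the plane in which every cell fits into $K$, every translate of $K$ meets the vertex set $V$. We argue by contradiction, with a topological argument. Suppose some translate, which after renaming we still call $K$, is disjoint from $V$. We compare $K$ with the cells that contain translates of $K$'s "pieces".

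First we record what "fits into" gives for each cell $\sigma$. There is a vector $t_\sigma$ with $\sigma + t_\sigma \subset K$. Equivalently, $t_\sigma \in K \ominus \sigma$, where $K\ominus\sigma=\{t : \sigma+t\subset K\}$ is a convex set. Reversing the roles, this means $-t_\sigma$ lies in the set $\{s : K - s \supset \sigma\}$.

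The key construction is a continuous map $f\colon \mathbb{R}^2\to\mathbb{R}^2$ that is a translation on each cell. We first try defining $f$ at each vertex $v$ as a point of $K - v$. That choice does not patch together consistently, so instead we use the following cleaner formulation. Let $K' = -K$. The claim "every translate $K+x$ meets $V$" is equivalent to "$V + K' = \mathbb{R}^2$", that is, the translates $v + K'$ ($v\in V$) cover the plane. We prove this covering statement as follows. Fix a point $x$ and suppose $x\notin v+K'$ for all $v$. For each vertex $v$, the vector $x - v$ lies outside $K'$. Let $c$ be an interior point of $K'$, and define $g(v)$ as the radial projection of $x-v$ from $c$ onto $\partial K'$, viewed as a direction $g(v)\in S^1$. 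Extend $g$ over edges and cells by following the short arc. This is well-defined because, for each cell $\sigma=\{u,v,w\}$, the three points $x-u,x-v,x-w$ form a triangle $x-\sigma$. Since $-\sigma$ fits into $K'$, some translate of $x-\sigma$ lies inside $K'$. A convexity argument then shows that the three points cannot "surround" $c$ relative to $K'$ while all lying outside $K'$. More precisely, a triangle all of whose vertices lie outside the convex set $K'$, yet which has a translate inside $K'$, cannot contain $K'$. This is plausible by comparing support functions, since the triangle would contain $K'$, which contains a translate of the triangle, forcing equality. It follows that the three directions lie in an open half-circle, so the extension over $\sigma$ is consistent.

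This yields a continuous map $g\colon\mathbb{R}^2\to S^1$. On a huge circle of vertices far from $x$ (taking the boundary of a large union of cells), the direction $g(v)$ is approximately the direction of $x-v$. So $g$ restricted to that boundary loop has degree $\pm1$, while $g$ extends over the disk it bounds. This is a contradiction.

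The main obstacle is the middle step: justifying that the three images of a cell determine a consistent contractible piece of the circle. This requires care with the radial projection center $c$, and in particular showing that the triangle $x-\sigma$ does not wind around $c$. If radial projection from $c$ fails, I would instead use the "outside" condition via nearest-point projection onto $K'$ and the separating-line structure. The degree computation at infinity should be routine once the triangulation is assumed locally finite with cells of bounded size, which follows from all cells fitting into the bounded set $K$.
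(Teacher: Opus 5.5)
Your overall strategy, a nonvanishing map on the triangulation contradicted by its degree on a large loop, is sound. The step you flagged as the main obstacle, however, is not just delicate: as stated it is false, so $g$ is not well defined. You need the following local claim. If a triangle $\tau$ has all vertices outside $K'$ and some translate of $\tau$ lies in $K'$, then the radial directions from $c$ to the vertices of $\tau$ lie in an open half-circle.

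Here is a counterexample. Take $K'=\conv\{(0,0),(1,0),(0,1)\}$ and $\tau=K'-(\varepsilon,\varepsilon)$.
\begin{itemize}
\item Every vertex of $\tau$ has a negative coordinate, so all three lie outside $K'$.
\item $\tau$ is a translate of $K'$, so it fits into $K'$.
\item For $\varepsilon>0$ small enough, $\tau$ contains in its interior whichever interior point $c$ of $K'$ you fixed.
\end{itemize}
The three radial directions then positively span the plane, and the short-arc loop on $\partial\tau$ has degree $\pm1$. So $g$ does not extend over this cell, and no choice of $c$ avoids the problem. Your supporting statement, that such a triangle cannot contain $K'$, is true (compare areas), but it only excludes $K'\subseteq\tau$, not $c\in\tau$. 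The triangle above contains $c$ but not $K'$.

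The missing idea is to give each vertex a \emph{separating} direction rather than a radial one. Write $p_v=x-v\notin K'$ and choose a unit vector $n_v$ with $\langle p_v,n_v\rangle>\max_{y\in K'}\langle y,n_v\rangle$. For instance, take the outer unit normal of $K'$ at the point of $K'$ nearest to $p_v$.

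Now let $\sigma=\{u,v,w\}$ be a cell and $s$ a vector with $(x-\sigma)+s\subseteq K'$. Then $\langle p_v+s,n_v\rangle\le\max_{y\in K'}\langle y,n_v\rangle<\langle p_v,n_v\rangle$, so $\langle s,n_v\rangle<0$, and the same holds for $u$ and $w$. Hence $n_u,n_v,n_w$ all lie in the open half-plane $\{\langle s,\cdot\rangle<0\}$. This one-line inequality replaces your convexity argument, and it makes the piecewise-linear extension of $v\mapsto n_v$ over the triangulation nowhere zero.

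With the nearest-point choice, the angle between $n_v$ and $p_v/|p_v|$ is $O(1/|p_v|)$, because $K'$ is bounded. So your degree computation on a large loop goes through unchanged and yields the contradiction. You mention nearest-point projection only as a fallback, but the proof as written rests on the false radial claim; the separation inequality is what actually makes it work.
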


Since $K + \Lambda = \mathbb{R}^2$, the volume of $K$ is at least that of any fundamental domain of $\Lambda$, for instance, the regular hexagon $K_0$, which is also the Voronoĭ cell of $\Lambda$. This lower bound on the area of $K$, when calculated explicitly, becomes our desired inequality $\area K \ge \frac{1}{2 \area \triangle}$.

\subsection{Third method: combining enclosing with lattice covering}

The proof following the second method can be completed in a different way. Instead of using \Cref{lem:caging}, one can substitute an idea from the first method.

Using the notation from the second proof, we define $\Lambda$ as the lattice spanned by the edges of $T_y$ or $T_o$. Our aim is to prove that the translates $K+\lambda$, $\lambda\in \Lambda$, cover $\mathbb R^2$.

For a closed polygonal curve $\gamma=(v_1\to \dots \to v_m\to v_{m+1}=v_1)$ in the plane, let $w_{\gamma}:\mathbb R^2\setminus\gamma\to \mathbb Z$ be defined as follows: $w_{\gamma}(x)$ is the winding number of the curve $\gamma$ around $x\in \mathbb R^2\setminus\gamma$.
Then $\mathcal A(\gamma)=\int_{\mathbb R^2} w_{\gamma}(x)\,dx$.

Let $\Gamma(t_y, t_o)$ be the closed polygonal line from the first proof. The key fact is that
\begin{equation}\label{eq:multiplicity}\tag{$\star$}
\sum_{\lambda\in \Lambda} w_{\Gamma(t_y+\lambda,t_o+\lambda)}(x) = 1    
\end{equation}
for any $x\not \in \bigcup\limits_{\lambda\in \Lambda} \Gamma(t_y+\lambda, t_o+\lambda)$. Note that the sum is well-defined since the support of $w_{\Gamma(t_y,t_o)}$ is bounded. The key fact can be verified as follows.
Every arrow in $\bigcup\limits_{\lambda\in \Lambda} \Gamma(t_y+\lambda, t_o+\lambda)$ appears together with its opposite arrow; it follows from the observation that the six arrows of $\Gamma(t_y,t_o)$ split into three pairs of opposite arrows, and in each pair $(v_i \to v_{i+1}, v_{i+3} \to v_{i+4})$ the difference $v_{i+3} - v_{i+1} = v_{i+4} - v_i$ belongs to $\Lambda$ (here we used cyclic indexing modulo $6$).
Therefore, the left-hand side of~\eqref{eq:multiplicity} equals the same integer $z$ wherever it is defined. This integer can be determined by integrating the left-hand side of~\eqref{eq:multiplicity} on a large square $S$ and comparing it to the area of $S$. Asymptotically, as the side length of $S$ tends to $\infty$, this integral equals $\act(\Gamma) \cdot |\Lambda \cap S|$, up to an error term depending on the perimeter of $S$ (again, since the support of $w_{\Gamma(t_y,t_o)}$ is bounded). Since $\act(\Gamma) = \area K_0$, and the $\Lambda$-translates of $K_0$ tile the plane, it follows that $z = 1$. In fact, to conclude the proof we do not need to know the exact value of $z$; it suffices to know that $z \neq 0$.

It is easy to see that if $w_{\Gamma(t_y,t_o)}(x)\ne 0$ for $x\not \in \Gamma(t_y,t_o)$, then $x\in K = \conv(\Gamma(t_y,t_o))$. By~\eqref{eq:multiplicity}, for each $x\not \in \bigcup\limits_{\lambda\in \Lambda}\Gamma(t_y+\lambda, t_o+\lambda)$, there is $\lambda\in \Lambda$ with $x\in K+\lambda$. By compactness of $K$, we conclude that the same is true for any $x \in \mathbb R^2$.

This proof can also be generalized to higher dimensions to give an alternative proof of \cite[Theorem~5.1]{balitskiy2020equality}.

\subsection{Characterization of equality cases}

The methods above provide different perspectives on smallest $\triangle$-covers. The analysis of equality cases in the first proof leads to the following conclusion: $\area K = \frac{1}{2 \area \triangle}$ if and only if $\Gamma$ is a convex curve (a curve winding once around a convex domain). The two-dimensional family of such equality cases, including $K_0$, is depicted in \Cref{fig:tri2}.
We call them \emph{tight hexagons}, associated with the three outward normal directions of $\triangle$ (most of them are indeed hexagons, but the boundary cases degenerate into parallelograms).

\begin{figure}[ht]
    \centering
    \includegraphics[width=0.75\linewidth]{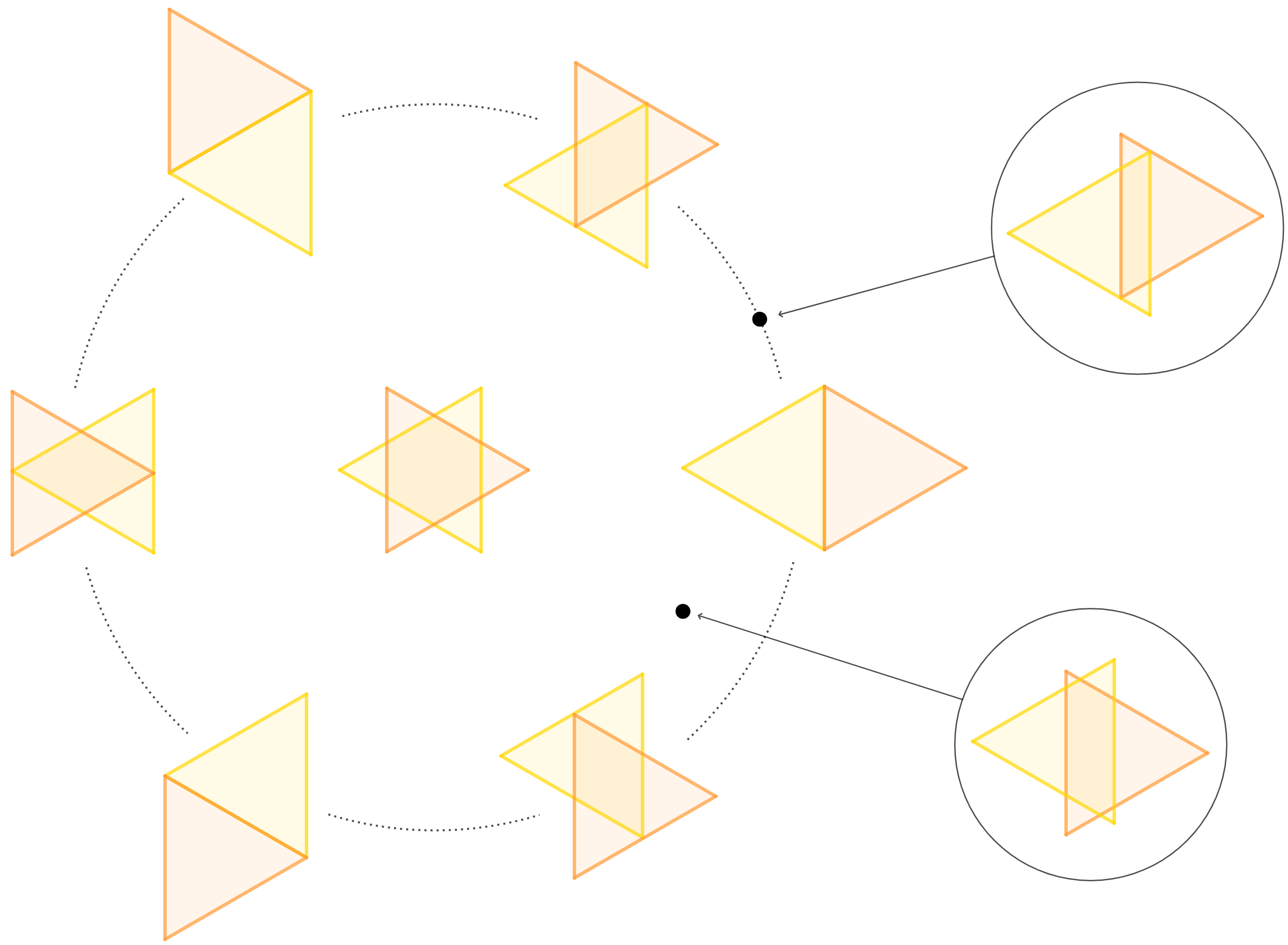}
    \caption{The two-dimensional family of area-minimizing $\triangle$-covers topologically is a disk. Each point represents a relative placement of the two $\triangle$-normal triangles, whose convex hull is a tight hexagon.}
    \label{fig:tri2}
\end{figure}

The second method gives the following perspective on equality cases: observe that $\area K = \frac{1}{2 \area \triangle}$ if and only if $K$ is a $\triangle$-cover that tiles the plane by $\Lambda$-translations. One way to describe them all is as follows. Consider the class $\Phi$ of positive definite quadratic forms $\phi(x)$ such that the Delaunay tessellation of $\Lambda$ with respect to the distance functional $\dist_\phi(x,y) = \sqrt{\phi(x-y)}$ coincides with the triangulation we considered above. The class $\Phi$ was described explicitly by Erdahl~\cite{erdahl1999zonotopes} as the forms 
$\phi(x) = \sum\limits_{i=1}^3 \alpha_i \langle d_i, x\rangle^2$,
where $d_1,d_2,d_3$ are vectors parallel to the sides of $Q$, and $\alpha_1,\alpha_2,\alpha_3$ are any positive numbers. Then any smallest $\triangle$-cover can be approximated by (the translates of) the Voronoĭ cells of $\Lambda$ with respect to $\dist_\phi$. This is the two-dimensional case of the characterization that works in any dimension, and it will be explained in a separate paper. 

Only one equality case from the family of tight hexagons has been shown to give rise to a symplectic ball. Namely, for the regular tight hexagon $K_0$, it was shown in~\cite{ostrover2025lagrangian}, using the Toda lattice, that the interior of $K_0 \times \triangle$ is symplectomorphic to a ball. It is an open question whether the same holds for other tight hexagons.

\subsection{Half-symmetric Viterbo inequality}

In this paragraph, $Q$ is any convex polygon (not a triangle). As we will see in \Cref{sec:pentagonal-covering}, \Cref{conj:viterbo} does not hold for general $K$ and $Q$. A simple argument shows that in the case where both $K$ and $Q$ are origin-symmetric, \Cref{conj:viterbo} is equivalent to Mahler's conjecture.\footnote{Here is this argument. The implication from Viterbo's conjecture to Mahler's conjecture is the main result of~\cite{artstein2014from}. For the reverse implication, inscribe in $K$ the largest possible homothetic copy $\beta Q^\circ$. It is known~\cite{artstein2014from} that, by connecting any pair of antipodal points in $Q^\circ$, one obtains a shortest $Q$-Minkowski billiard orbit in $Q^\circ$. The common boundary $\partial \beta Q^\circ \cap \partial K$ is origin-symmetric, so connecting any two of its antipodal points yields a shortest $Q$-Minkowski billiard orbit in $K$, with $Q$-length $4\beta$. Therefore, $\vol K \cdot \vol Q \ge \beta^n \vol Q^\circ \cdot \vol Q \overset{\text{Mahler}}\ge  \frac{(4\beta)^n}{n!} = \frac{c(K \times Q)^n}{n!}$.} A middle-ground between the more general (and false) and the less general (and equivalent to Mahler's) versions could be the case where only $K$ is required to be origin-symmetric. Reformulating is as a worm problem, we obtain the problem of the smallest \emph{origin-symmetric} $Q$-cover. Let $K$ be an origin-symmetric $Q$-cover. Let us say that a triple of outward normal directions of $Q$ \emph{encloses the origin} if their convex hull contains the origin in its interior. For every triple of outward normal directions of $Q$ that encloses the origin, there are two corresponding $Q$-normal triangles, differing by a central symmetry. Let $T$ be one of them, shifted so that $T \subset K$. The set $\conv (T \cup (-T))$ may already be a tight hexagon associated with that triple of outward normal directions of $Q$. But even if not, we can shift $T$ and $-T$ towards each other, so that $\conv (T \cup (-T))$ strictly decreases, and eventually it becomes a tight hexagon. In other words, if $K$ is an origin-symmetric $Q$-cover, then it also covers a tight hexagon, centered at the origin, for each triple of outward normal directions of $Q$ enclosing the origin. The other way around, if $K$ covers a tight centered hexagon for each triple of outward normal directions of $Q$ enclosing the origin, then every $Q$-normal triangle fits into $K$ trivially. 
Therefore, we arrive at the following question, an affirmative answer to which is equivalent to \Cref{conj:viterbo} in the case when $n=2$ and $K$ is centrally symmetric.

\begin{question}\label{ques:half-symmetric-viterbo-covering}
Let $Q$ be a convex polygon. To each pair of parallel sides, assign the corresponding $Q$-normal line segment. To each triple of outward normal directions of $Q$ that encloses the origin, assign one tight hexagon, selected arbitrarily from the two-dimensional family of tight hexagons associated with this triple. Shift each assigned shape so that it is centered at the origin. Is it true that the convex hull of these shapes has area at least $\frac{1}{2 \area Q}$?
\end{question}

\section{A plunge: quadrilateral norm}\label{sec:quadrilateral-covering}

In this section we solve \Cref{prob:cover} in the case where $Q$ is a convex quadrilateral, thus proving the corresponding part of \Cref{thm:quadri-hexa-viterbo}.

\subsection{The Viterbo inequality for quadrilaterals}
Applying an appropriate affine transformation, we can assume that $Q$ is a quadrilateral with perpendicular diagonals of equal lengths. In the generic case there are two triples of sides of $Q$ whose outward normal directions enclose the origin in the sense that their convex hull contains the origin in its interior. In degenerate situations (opposite sides parallel) one or both triples may become pairs instead, and the corresponding situations are simpler to investigate, but they also follow from the generic case by ``passing to the limit'' type of argument. So we stick to the generic case, so that there are four $Q$-normal triangles. \Cref{prob:cover} calls for the search of a convex shape of smallest area into which all four normal triangles fit. In \Cref{fig:quadri-cover} we depict an optimal shape, which turns out to be a square (but sometimes it is not unique).

\begin{theorem}\label{thm:serbia}
Let $Q$ be a convex quadrilateral with perpendicular diagonals of unit length (so the area of $Q$ is $1/2$). Then any $Q$-cover has area at least $1$, and there is a $Q$-cover of area exactly $1$: the unit square with the sides parallel to the diagonals of $Q$.
\end{theorem}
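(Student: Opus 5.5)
We follow the first method of \Cref{sec:triangular-covering}, the enclosed constant area. We first fix coordinates. Put the diagonals of $Q$ along the coordinate axes, meeting at the origin, with vertices $(-a,0)$, $(1-a,0)$, $(0,-b)$, $(0,1-b)$ for some $a,b\in(0,1)$. In the generic case the four outward normals $\nu_1,\dots,\nu_4$ of the sides are in general position, and the triples enclosing the origin are $\{\nu_1,\nu_2,\nu_3\}$ and $\{\nu_1,\nu_3,\nu_4\}$ (say), or the other split by the remaining diagonal. In any case there are four translation classes of $Q$-normal triangles, $T_1,\dots,T_4$, and we compute their shapes via \Cref{ex:shift-origin}.

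\textbf{Upper bound.} To see that the unit square $S$ with sides parallel to the diagonals is a $Q$-cover, we check that each $T_j$ fits into $S$. Each $T_j$ has euclidean size controlled by $Q$. We expect each normal triangle to be inscribed in $S$ with one vertex at a corner of $S$ and its two other vertices on the two opposite sides. The unit-diagonal normalization of $Q$ should make this exact. We verify this by an explicit coordinate computation; it is routine.

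\textbf{Lower bound.} Given an arbitrary $Q$-cover $K$, we translate $T_1(t_1),\dots,T_4(t_4)$ into $K$. It suffices to bound $\area\conv\bigcup T_j(t_j)\ge 1$. We then build a closed polygonal curve $\Gamma(t_1,\dots,t_4)$ through (some of) the twelve vertices, with the following property. For each $j$, the vertices of $T_j$ enter $\Gamma$ in such a way that the variation of $\act(\Gamma)$ in $t_j$ cancels. As in the triangular computation, this holds if the vertices of $T_j$ that appear in $\Gamma$ have neighbours whose signed sum vanishes. Equivalently, for each moving piece, the sum of incoming minus outgoing neighbour vectors is zero. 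Then $\act(\Gamma)$ is constant, and we evaluate it at the square configuration, where we expect $\Gamma$ to be $\partial S$ traced once, giving $A=1$. The natural guess is an ordering in which $\Gamma$ alternates between the two triangles of each enclosing triple, exactly as the hexagonal curve did for a single triple. The two hexagon-like pieces would then be glued along common directions (the direction of the side normal shared by both triples). If a single curve does not exist, we would instead use a curve passing through vertices of different triangles more than once, while keeping the balancing condition.

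\textbf{Main obstacle.} The hard step is the second one: showing $\act(\Gamma)\le\area\conv(\Gamma)$ for every choice of shifts. Here we aim to apply \Cref{lem:croissant}. We choose a common direction, presumably along one diagonal of $Q$ (one of the sides of $S$), and pair the vertices of $\Gamma$ as $a_i,b_i$ so that every $a_i-b_i$ is parallel to it. For instance, in the triangular case the pairs were the vertices differing by a vector in the direction of a side. The differences $a_i-b_i$ should come out to be fixed vectors determined by the shapes of the $T_j$, independent of the shifts. We then need to check unimodality of $|a_i-b_i|$ using the geometry of $Q$, namely that the side-lengths of the normal triangles in that direction increase and then decrease along the ordering. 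If the pairing only works for some relative positions, we will split into cases according to the order of the relevant projections and, in the remaining cases, directly decrease the convex hull by sliding triangles toward each other, as in the half-symmetric discussion, until the curve satisfies the hypotheses of \Cref{lem:croissant}. The non-generic cases, where opposite sides of $Q$ are parallel, follow by passing to the limit.
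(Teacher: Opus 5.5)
Your upper bound is correct in substance: each normal triangle's bounding box with sides parallel to the diagonals of $Q$ is a unit square, with one vertex of the triangle at a corner. The paper derives this from the dual Pappus theorem together with the computation that the side length is $1/|AC|$; you only assert it. The genuine gap is in the lower bound. It rests entirely on a single closed curve $\Gamma$ through the twelve sliding vertices whose enclosed area is shift-independent and equals $\area K_0=1$. You do not construct this curve, and for a generic quadrilateral no such curve exists, whether or not vertices are repeated.

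Here is why. The four $Q$-normal triangles are the two halves of the parallelogram spanned by $u$ and $bw$, and the two halves of the parallelogram spanned by $au$ and $w$. Here $u,w$ are outward normals of the two adjacent sides of $Q$ shared by both enclosing triples, and $a,b>1$. Suppose $\act(\Gamma)$ does not depend on the shifts, and evaluate it when each triangle has a vertex at the origin. All vertices then lie in $\mathbb{Z}u+\mathbb{Z}bw+\mathbb{Z}au+\mathbb{Z}w$, so $\act(\Gamma)\in\frac12(\mathbb{Z}+\mathbb{Z}a+\mathbb{Z}b+\mathbb{Z}ab)\,|u\wedge w|$. On the other hand, $\area K_0=\frac{ab(ab-1)}{2ab-a-b}\,|u\wedge w|$, which is not of this form unless $(a,b)$ lies on one of countably many algebraic curves. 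This also rules out your fallback with repeated vertices. For example, the alternating hexagonal curve through $\pm T$ for one triple is balanced, but its constant $2\area T$ is $b|u\wedge w|$ or $a|u\wedge w|$, and both are $<\area K_0$. Separately, the Steiner direction cannot be a diagonal of $Q$. For the differences $a_i-b_i$ to be shift-independent, they must be edges of normal triangles, hence perpendicular to sides of $Q$.

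The missing idea is to give up on a single balanced curve and use a convex combination with non-integer, $Q$-dependent weights. The paper takes four curves $\Gamma_{AD},\Gamma_{AB},\Gamma_{BC},\Gamma_{CD}$. The curve for a given side consists of the vertices cut out on $\partial K_0$ by chords perpendicular to that side. Thus each curve satisfies \Cref{lem:croissant} in its own direction, with unimodality coming from the convexity of $K_0$. None of these curves has constant enclosed area: their variations in $t_y$ are $\frac12 s\wedge v$ with $v=(1-a)u$, $(1-b)w$, $au+w$, $u+bw$ respectively, and the other shifts impose the same condition. Nevertheless, there is a two-parameter family of nonnegative weights $\alpha_\bullet$, rational in $a,b$, with $\sum\alpha_\bullet=1$, making $\Theta_\alpha=\sum\alpha_\bullet\act(\Gamma_\bullet)$ constant. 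Its value at the square configuration is $1$. Hence $\area K\ge\sum\alpha_\bullet\area\conv\Gamma_\bullet\ge\Theta_\alpha(t)=\Theta_\alpha(0)=1$. The real-valued weights are exactly what escapes the lattice obstruction above.
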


\begin{proof}
Assume that $Q$ has no parallel sides (otherwise, approximate it by one with no parallel sides). Let $K_0$ be the unit square with the sides parallel to the diagonals of $Q$. 

\underline{Step 1}. We first show that $K_0$ is $Q$-cover. Denote the vertices of $Q$ by $A,B,C,D$, and circumscribe a square around $Q$ with sides parallel to $AC$ and $BD$. Let $E$ and $F$ be two opposite vertices of the circumscribed square, and suppose that lines $AB$ and $CD$ intersect at the point $G$, as in \Cref{fig:quadri-cover}. It is a corollary of the Pappus theorem from projective geometry that the points $E$, $F$, and $G$ lie on the same line. Namely, the dual Pappus theorem (which is equivalent to the Pappus theorem itself) can be applied to two triples of lines: three vertical lines $AE,BD,CF$, and three horizontal lines $BF,AC,DE$. The conclusion is that the lines $AB,CD,EF$ pass through the same point $G$. In other words, if we circumscribe a rectangle around $\triangle AGD$, with sides parallel to $AC$ and $BD$, then it will actually be a square. Two out of four $Q$-normal triangles are homothetic to $\triangle AGD$ rotated by $\pm \pi/2$. Therefore, we can inscribe two $Q$-normal triangles in the same square; see \Cref{fig:quadri-cover}, on the right, where the yellow and orange triangles are inscribes in the red square. We need to show that the side length of this red square is $1$. (The argument for two remaining $Q$-normal triangles, blue and green in the figure, is similar.)

\begin{figure}[ht]
    \centering
    \includegraphics[width=0.8\textwidth]{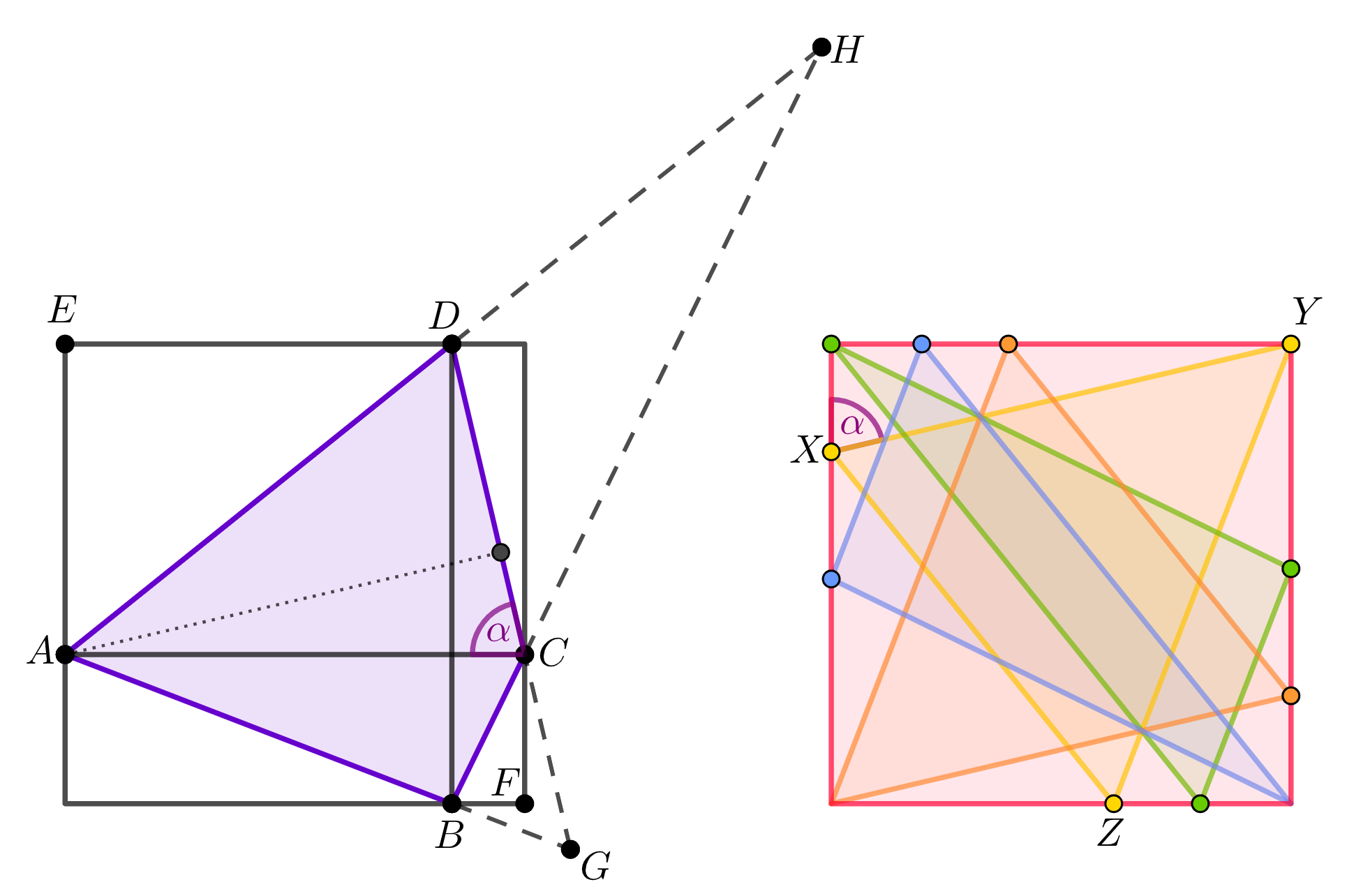}
    \caption{The quadrilateral $Q$ (purple, left) and the $Q$-covering square $K_0$ (red, right) with four $Q$-normal triangles inside.}
    \label{fig:quadri-cover}
\end{figure}

As it is explained in \Cref{ex:shift-origin}, for the computation of the $Q$-perimeter of the yellow triangle $\triangle XYZ$ we can ``shift the origin'' (in the plane of quadrilateral $Q$) to the point $A$. Then
\[
1 = \len_Q(X \to Y \to Z \to X) = \dist(A, CD) \cdot |XY|,
\]
where $\dist(A, CD)$ denotes the euclidean distance from $A$ to the line $CD$, and $|XY|$ denotes the euclidean length of the segment $XY$. Denote by $\alpha$ the angle $\measuredangle ACD$, and observe that the side length of the red square equals
\[
|XY| \sin \alpha = \frac{\sin \alpha}{\dist(A, CD)} = \frac1{|AC|} = 1.
\]

\underline{Step 2}. We show that any $Q$-cover has the area at least that of $K_0$. The main tool in the proof will be computing oriented areas enclosed by curves (\Cref{def:action}). 
We label the vertices of the four $Q$-normal triangles inside $K_0$ by $p_1, \ldots, p_{12}$ following the cyclic order along the boundary of $K_0$. Let $v_i(t) = p_i + t$, $t \in \mathbb{R}^2$, be sliding copies of the vertices $p_i$. Denote by $T_y(t_y) = \triangle v_1 v_5 v_8 = t_y + \triangle p_1 p_5 p_8$ the sliding yellow triangle (it received indices $1$, $5$, $8$ depending on this particular shape of $Q$, but indices do not matter in the following argument). Similarly, let $T_o(t_o) = \triangle v_2 v_7 v_{11}$ be the sliding orange triangle, $T_b(t_b) = \triangle v_3 v_6 v_{10}$ the sliding blue triangle, and $T_g(t_g) = \triangle v_4 v_9 v_{12}$ the sliding green triangle. Now consider the following closed polygonal curves:
\begin{align*}
\Gamma_{AD} &= v_1 \to v_2 \to v_3 \to v_4 \to v_5 \to v_7 \to v_8 \to v_9 \to v_{10} \to v_{11} \to v_1, \\
\Gamma_{AB} &=  v_1 \to v_2 \to v_3 \to v_4 \to v_6 \to v_7 \to v_8 \to v_9 \to v_{10} \to v_{12} \to v_1,\\
\Gamma_{BC} &= v_1 \to v_4 \to v_6 \to v_7 \to v_{10} \to v_{12} \to v_1, \\
\Gamma_{CD} &= v_1 \to v_4 \to v_5 \to v_7 \to v_{10} \to v_{11} \to v_1.
\end{align*}
They are obtained in the following way. To construct $\Gamma_{AD}$, consider all lines $\ell$ perpendicular to $AD$ such that all intersections of $\ell$ with the boundary of $K_0$ are among the points $p_i$. Note down these $p_i$, observe their cyclic order along the boundary of $K_0$, and construct $\Gamma_{AD}$ out of the corresponding points $v_i$ in the same order. The curves $\Gamma_{AB}$, $\Gamma_{BC}$, and $\Gamma_{CD}$ are constructed similarly. We found these curves when examining the billiard properties of the configuration $(K_0,Q)$. It turns out that it enjoys some kind of Zoll property: for almost every initial position of a billiard ball in $K_0$ and almost every initial momentum in $\partial Q$, the resulting trajectory is a $4$-periodic billiard orbit of fixed length. The four $Q$-normal triangles arise as limiting $3$-periodic billiard orbits, and the curves $\Gamma_\bullet$ appear naturally when analyzing the space of these $3$- and $4$-periodic orbits. 

\begin{figure}[ht]
    \centering
    \includegraphics[width=0.8\textwidth]{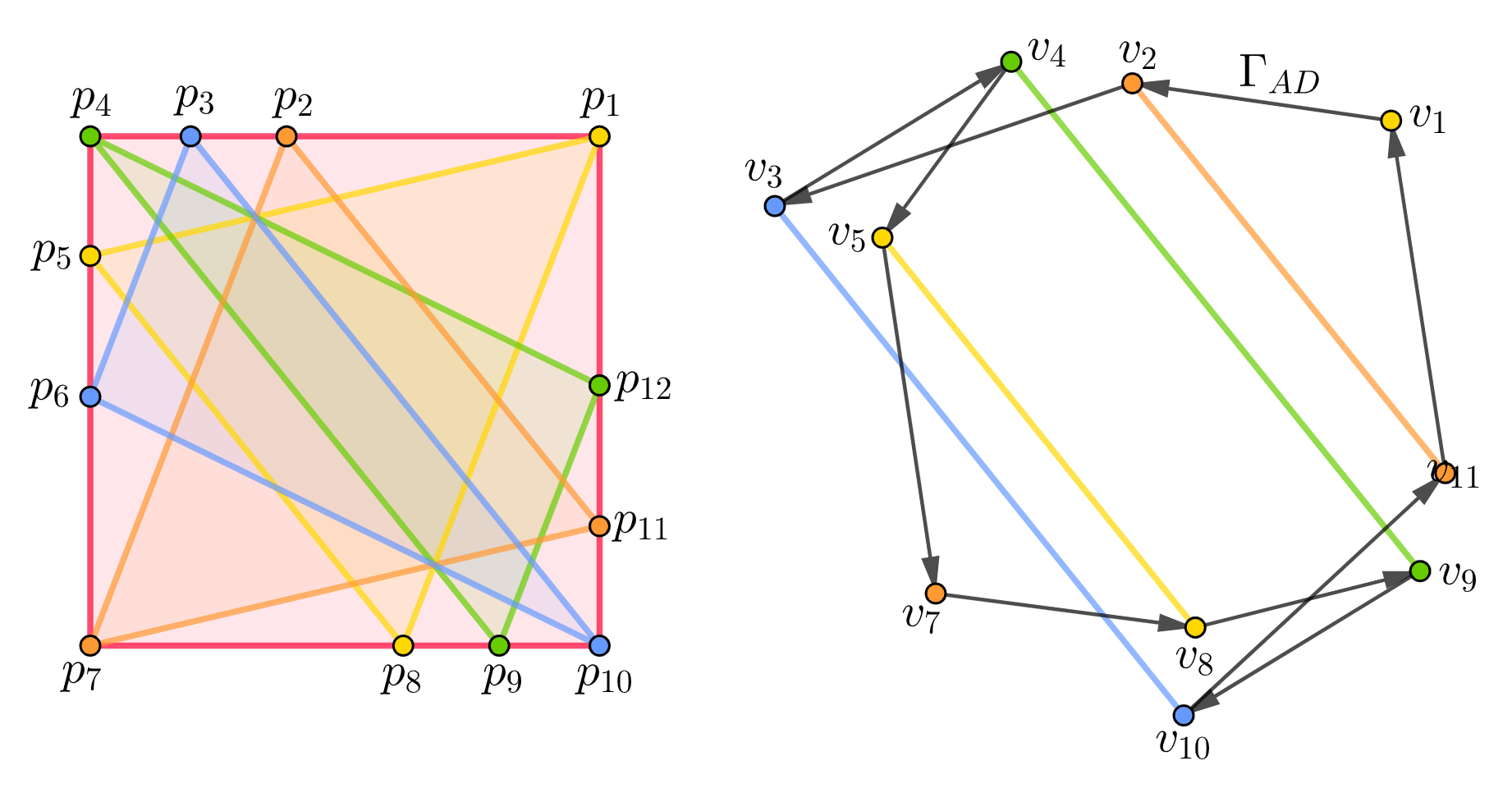}
    \caption{Labeling of the vertices of the $Q$-normal triangles and the contour $\Gamma_{AD}$.}
    \label{fig:quadri-contour}
\end{figure}

Each of the four curves $\Gamma_\bullet$ depends on the slide parameters $t = (t_y,t_o,t_b,t_g) \in (\mathbb{R}^2)^4$, and each of the oriented areas $\act(\Gamma_\bullet)$ is a linear function of $t$. We will find a convex combination 
\[
\Theta_\alpha(t) = \alpha_{AD} \act(\Gamma_{AD}) + \alpha_{AB} \act(\Gamma_{AB}) + \alpha_{BC} \act(\Gamma_{BC}) + \alpha_{CD} \act(\Gamma_{CD})
\]
that is constant. For $t=0$ each of the curves $\Gamma_\bullet$ traces the boundary of $K_0$ counterclockwise, so the constant actually equals $\Theta_\alpha(0) = \area(K_0) = 1$.

Introduce vectors $u,w \in \mathbb{R}^2$ and numbers $a,b > 1$ in the following way:
\begin{align*}
    u &= v_2 - v_{11} = v_5 - v_8, \\
    au &= v_3 - v_{10} = v_4 - v_9, \\
    w &= v_6 - v_3 = v_9 - v_{12}, \\
    bw &= v_8 - v_1 = v_7 - v_2.
\end{align*}

Now we compute the variations of oriented areas with respect to sliding $T_y$:
\begin{align*}
    \act(\Gamma_{AD}(t_y+s,t_o,t_b,t_g)) - \act(\Gamma_{AD}(t_y,t_o,t_b,t_g)) &= \frac12 s \wedge (v_2 - v_{11} + v_9 - v_4) = \frac12 s \wedge (1-a)u, \\
    \act(\Gamma_{AB}(t_y+s,t_o,t_b,t_g)) - \act(\Gamma_{AB}(t_y,t_o,t_b,t_g)) &= \frac12 s \wedge (v_9 - v_7 + v_2 - v_{12}) = \frac12 s \wedge (1-b)w, \\
    \act(\Gamma_{BC}(t_y+s,t_o,t_b,t_g)) - \act(\Gamma_{BC}(t_y,t_o,t_b,t_g)) &= \frac12 s \wedge (v_4-v_{12}) = \frac12 s \wedge (au+w), \\
    \act(\Gamma_{CD}(t_y+s,t_o,t_b,t_g)) - \act(\Gamma_{CD}(t_y,t_o,t_b,t_g)) &= \frac12 s \wedge (v_7 - v_{11}) = \frac12 s \wedge (u+bw).
\end{align*}

If we choose $\alpha_\bullet$ in a way that guarantees
\[
\alpha_{AD} (1-a)u + \alpha_{AB}(1-b)w + \alpha_{BC}(au+w) + \alpha_{CD}(u+bw) = 0,
\]
the resulting function $\Theta_\alpha(t)$ will not depend on $t_y$. It turns out that the conditions that $\Theta_\alpha(t)$ does not depend on $t_o,t_b,t_g$ are exactly the same. This can be verified directly, but we also indicate another way of seeing this. Denote by $\delta_{t_\bullet} \Theta_\alpha$ the expression obtained by varying $t_\bullet$. Then $\delta_{t_o} \Theta_\alpha = -\delta_{t_y} \Theta_\alpha$, $\delta_{t_b} \Theta_\alpha = -\delta_{t_g} \Theta_\alpha$ just due to the central symmetry of the configuration. The fact that $\delta_{t_b} \Theta_\alpha = - \delta_{t_y} \Theta_\alpha$ follows from the observation that, in all four curves, the union of yellow and blue vertices alternates with the union of orange and green vertices. Consequently, in $\delta_{t_b} \Theta_\alpha$ and $\delta_{t_y} \Theta_\alpha$ we will see---with opposite signs---only the terms corresponding to those vertices whose two neighbors are one blue and one yellow.

All non-negative solutions of the zero-variation equation are parametrized by arbitrary $x,y \ge 0$ as follows: 
\begin{align*}
\alpha_{AD} &= \frac{xa}{a-1} + \frac{y}{a-1}, \\ 
\alpha_{AB} &= \frac{x}{b-1} + \frac{yb}{b-1}, \\ 
\alpha_{BC} &= x, \\ 
\alpha_{CD} &= y.
\end{align*}
Fix any non-negative solution with $\alpha_{AD} + \alpha_{AB} + \alpha_{BC} + \alpha_{CD} = 1$. \Cref{lem:croissant} shows that $\act(\Gamma_{\bullet}) \le \area(\conv(\Gamma_{\bullet}))$ for any of the four curves we considered. Note that the lemma is indeed applicable; for example, the assumptions of the lemma for $\Gamma_{AD}$ boil down to the fact that the arrows $p_1 \to p_1$, $p_2 \to p_{11}$, $p_3 \to p_{10}$, $p_4 \to p_9$, $p_5 \to p_8$, $p_7 \to p_7$ (two of which are trivial) have the same direction, and their (euclidean) lengths form a unimodal sequence. This can be seen from \Cref{fig:quadri-contour}, where these vectors are consecutive slices of $K_0$; the unimodality follows from the convexity of $K_0$.

We are ready to conclude the proof. Suppose $K$ is a $Q$-cover, that is, for some choice of $t=(t_y,t_o,t_b,t_g)$, we have $T_y, T_o, T_b, T_g \subset K$. Then $\Gamma_{AD}, \Gamma_{AB}, \Gamma_{BC}, \Gamma_{CD} \subset K$. With the choice of $\alpha_\bullet$ as above, we see that
\begin{align*}
\area(K) &\ge \alpha_{AD} \cdot \area(\conv(\Gamma_{AD})) + \mbox{ three similar summands} \\
&\ge \alpha_{AD} \cdot \act(\Gamma_{AD }) + \mbox{ three similar summands} \\
&= \Theta_\alpha(t) = \Theta_\alpha(0) = \area(K_0).
\end{align*}
\end{proof}

As a corollary of \Cref{thm:serbia}, we see that \Cref{thm:quadri-hexa-viterbo} holds in the case where $Q$ is a quadrilateral.

The problem of engineer Q.~Uadrilateral from the introduction is equivalent via an affine change to the problem we just solved, with the same parameters $a, b$. The smallest area of a $Q$-cover in that problem can be computed directly to be $\frac{ab(ab-1)}{2ab-a-b}$.

\subsection{Characterization of equality cases}

To characterize the cases where $\area K = \frac{ab(ab-1)}{2ab-a-b}$, one needs to describe carefully all situations when all four curves $\Gamma_{AD}$, $\Gamma_{AB}$, $\Gamma_{BC}$, $\Gamma_{CD}$ enclose the same area of the same shape $K$. It does not necessarily mean that each $\Gamma_\bullet$ strictly follows the boundary of $\partial K$ counterclockwise; there might be parts of $\Gamma_\bullet$ that trace a certain arc back and forth without any contribution to the enclosed area. However, 1) these parts must lie inside $K$, otherwise we have a strict inequality in \Cref{lem:croissant}, and 2) if we remove those parts, the simplified $\Gamma_\bullet$ does follow the boundary of $\partial K$ counterclockwise. The case analysis behind the classification of equality cases is straightforward but tedious. We only formulate the result below.

It is convenient to parametrize all quadrilaterals $Q$ up to affine equivalence using the parameters $a$ and $b$; we can assume that $a \le b$. An elementary computation shows that those parameters can be seen in \Cref{fig:quadri-cover} as $a = \frac{|AG|}{|AB|}$ and $b=\frac{|AH|}{|AD|}$. 
We also need to recall the cases when $Q$ has one or two pairs of parallel sides; they can be thought as corresponding to one or both parameters $a$ and $b$ equal to $+\infty$. For example, if $b = +\infty$, it corresponds to the case when the blue and green triangles become so thin so that degenerate to a single line segment, so we can assume $v_3 = v_4 = v_6$ and $v_9 = v_{10} = v_{12}$. In this case $Q$ is a trapezoid. 
If both $a$ and $b$ equal $+\infty$, then $Q$ is a parallelogram, and the covering problem becomes completely trivial, as it asks for the smallest convex hull of two non-parallel segments. Another type of degeneration can be considered if we allow $a=1$: in this case $Q$ becomes a triangle regardless of the value of $b$, and this problem was already solved in \Cref{sec:triangular-covering}.

We now collect the description of all equality cases depending on $1 \le a \le b \le +\infty$, from the least generic case to the most generic. We assume that $Q$ is affinely reduced to the form with perpendicular diagonals of equal lengths, and whenever we refer to $K_0$, it is the square from \Cref{thm:serbia}.
\begin{itemize}
    \item $a=1$, the value of $b$ does not matter. $Q$ is a triangle. The space of equality cases is two-dimensional and was described in \Cref{sec:triangular-covering}. 
    \item $a, b = +\infty$. $Q$ is a parallelogram. The space of equality cases is two-dimensional and described as follows: there are two $Q$-normal line segments, and the area of their convex hull is minimal whenever they intersect. 
    \item $a \in (1, +\infty)$, $b = +\infty$. $Q$ is a trapezoid. The only equality case is $K_0$.\footnote{A discussion of this case (as well the corresponding special case of \Cref{thm:serbia}) appeared in~\cite{rudolf2022viterbo} following correspondence with the first‑named author, in which these considerations were raised.}
    \item $a,b \in (1, +\infty)$, $(a-1)(b-1) = 1$. $Q$ is a quadrilateral without parallel sides but it satisfies a constraint that enforces $p_5 = p_6$ and $p_{11} = p_{12}$ in \Cref{fig:quadri-contour}. The space of equality cases is one-dimensional: one endpoint is $K_0$, and it is possible to slide the $Q$-normal triangles to obtain a one-parametric family of $Q$-covering hexagons of the same area (see \Cref{fig:quadri3}). These equality cases (except for the boundary one with $K_0$) have not been described before in the literature. 
    \item $(a-1)(b-1) \neq 1$. $Q$ is a maximally generic quadrilateral. The only equality case is $K_0$.
\end{itemize}

\begin{figure}[ht]
    \centering
    \includegraphics[width=0.8\linewidth]{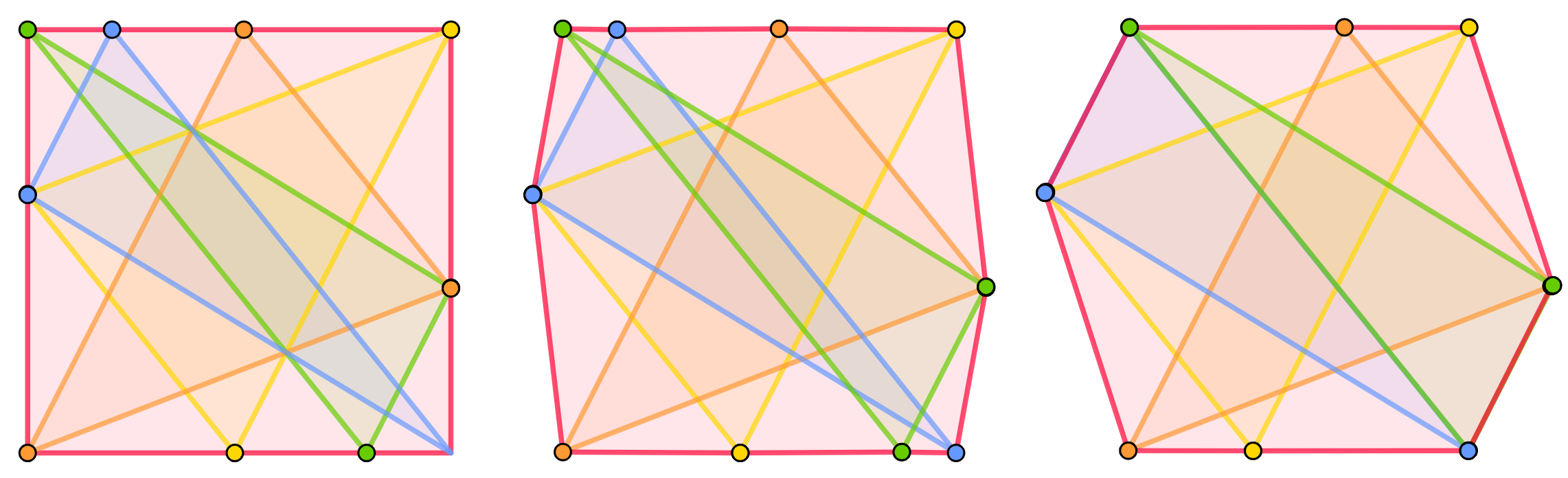}
    \caption{A one-parameter family of smallest $Q$-covers in the case where $Q$ is a quadrilateral with $(a-1)(b-1)=1$.}
    \label{fig:quadri3}
\end{figure}

For many of these equality cases it can be shown that the interior of $K \times Q$ is symplectomorphic to a ball. The idea is to construct explicit symplectic embeddings (see \cite{traynor1995symplectic,schlenk2005embedding}), and then in dimension $4$ the existence of symplectomorphisms follows from the heavy machinery of~\cite{latschev2013gromov}. The details of these embeddings are given in~\cite{rudolf2022viterbo} for most equality cases, except for the new ones with $(a-1)(b-1) = 1$. We leave it as an open question whether the new equality cases with $(a-1)(b-1) = 1$ are symplectic balls.

\subsection{Weak Viterbo inequality via approximation by quadrilaterals}
Here we quote an observation of Y.~Ostrover (private communication). Having proved the Viterbo inequality for quadrilaterals (\Cref{thm:serbia}), we can establish a non-sharp version of the Viterbo inequality for all convex shapes $K$ and $Q$ by approximating $Q$ with a quadrilateral. A result of W.~Kuperberg~\cite{kuperberg1983minimum} states that around any convex shape $Q$ one can circumscribe a quadrilateral $Q'$ such that $\frac{\area Q'}{\area Q} < \kappa$, where $1 < \kappa < \sqrt{2}$ is a fixed non-explicit constant, later specified in~\cite{fodor2026minimal} as $\kappa = (1 - 2.6 \cdot 10^{-7})\sqrt{2}$.\footnote{It is also conjectured in~\cite{kuperberg1983minimum} that the worst shape $Q$ is a regular pentagon, and that one may take $\kappa=\frac{3}{\sqrt{5}}$.} Then we have:
\[
\area K \cdot \area Q \ge \area K \cdot \frac{\area Q'}{\kappa} \ge \frac1{2\kappa} c(K \times Q')^2 \ge \frac1{2\kappa} c(K \times Q)^2.
\]
Here we applied \Cref{thm:serbia} to the pair $(K,Q')$, and used the monotonicity of capacities $c(K \times Q) \le c(K \times Q')$ for $Q \subset Q'$. This monotonicity is known in general for convex sets in $\mathbb{R}^2 \times \mathbb{R}^2$, but it can also be explained elementarily. As discussed in \Cref{sec:history}, the capacity is symmetric in $K$ and $Q$; and by \Cref{thm:capacity-bezdek} we have $c(Q \times K) \le c(Q' \times K)$. This proves that $\kappa < \sqrt{2}$ in \Cref{ques:constant-viterbo}.

\section{A revelation: pentagonal norm}\label{sec:pentagonal-covering}

In this section we consider the case where $Q$ is a convex pentagon. The space of pentagons is four-dimensional, if we consider them up to affine equivalence.
We have no complete solution of \Cref{prob:cover} for any pentagon, but we consider two scenarios that we find enlightening. The first of them leads to an example due to P.~Haim-Kislev and Y.~Ostrover, which disproves \Cref{conj:viterbo}. The framework of the worm problem for $Q$-normal triangles provides a simple explanation of that example.

\subsection{Regular pentagon and the counterexample of Haim-Kislev and Ostrover}
Let $Q$ be a regular pentagon. There are ten $Q$-normal triangles, all congruent with angles $\pi/5$, $\pi/5$, $3\pi/5$; they are obtained by rotating a single one by angles $k\pi/5$, $k = 0, 1,\ldots,9$. Let $K$ be an appropriately rescaled copy of $Q$, rotated by $\pi/2$, such that every $Q$-normal triangle fits (tightly) into $K$. The paper~\cite{haim2026counterexample} used the Minkowski billiard framework to compute $c(K \times Q)$. The machinery of \Cref{sec:reduction} allows us to significantly simplify their argument: since every $Q$-normal triangle fits into $K$, whereas no larger homothetic copy of any $Q$-normal triangle does, \Cref{thm:normal-cover} immediately gives $c(K \times Q) = 1$. A direct computation of areas shows that
\[
\area K \cdot \area Q = \underbrace{\frac{5}{\sqrt{5}+3}}_{<1} \cdot\frac{c(K\times Q)^2}{2},
\]
which violates the Viterbo inequality (\Cref{conj:viterbo}).

\begin{figure}[ht]
    \centering
    \includegraphics[width=0.5\linewidth]{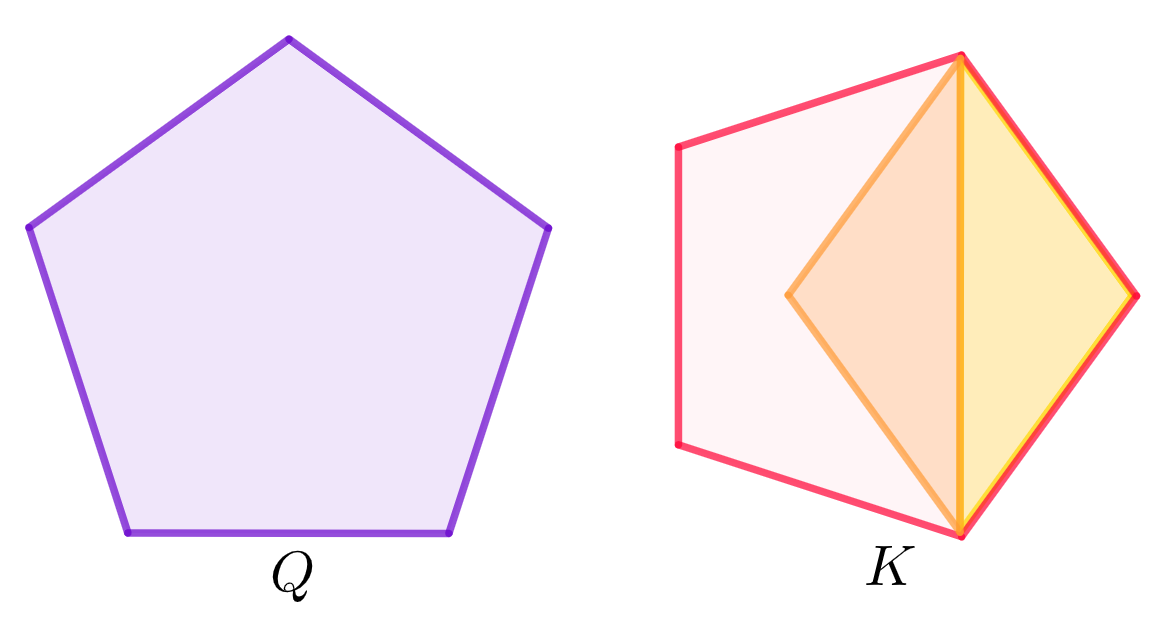}
    \caption{The Haim-Kislev--Ostrover example. Two of the ten $Q$-normal triangles (yellow and orange) are shown covered by $K$; the remaining eight are obtained by applying the symmetries of $K$.}
    \label{fig:penta1}
\end{figure}

It seems likely that this $K$ has the smallest area among all $Q$-covers, but we do not know how to prove this.

\subsection{Squares truncated at one corner}
Let $Q$ be a pentagon obtained by cutting off one corner of a square along a line segment. These pentagons form a two-dimensional subfamily within the full four-dimensional family of pentagons. For such $Q$, there are four $Q$-normal triangles, two of which are degenerate.
In the fall of 2023, the first-named author, together with Mingkun Liu, invited Lou Meylender and Thomas Thalmaier, two undergraduate students at the University of Luxembourg, to investigate this problem experimentally within the framework of the Experimental Mathematics Lab. They implemented an optimization program searching for the smallest $Q$-cover. For the shapes they found \Cref{conj:viterbo} holds with a margin. We formulate their findings as a conjecture about the optimal shape of $Q$-covers.

\begin{figure}[ht]
    \centering
    \includegraphics[width=0.9\linewidth]{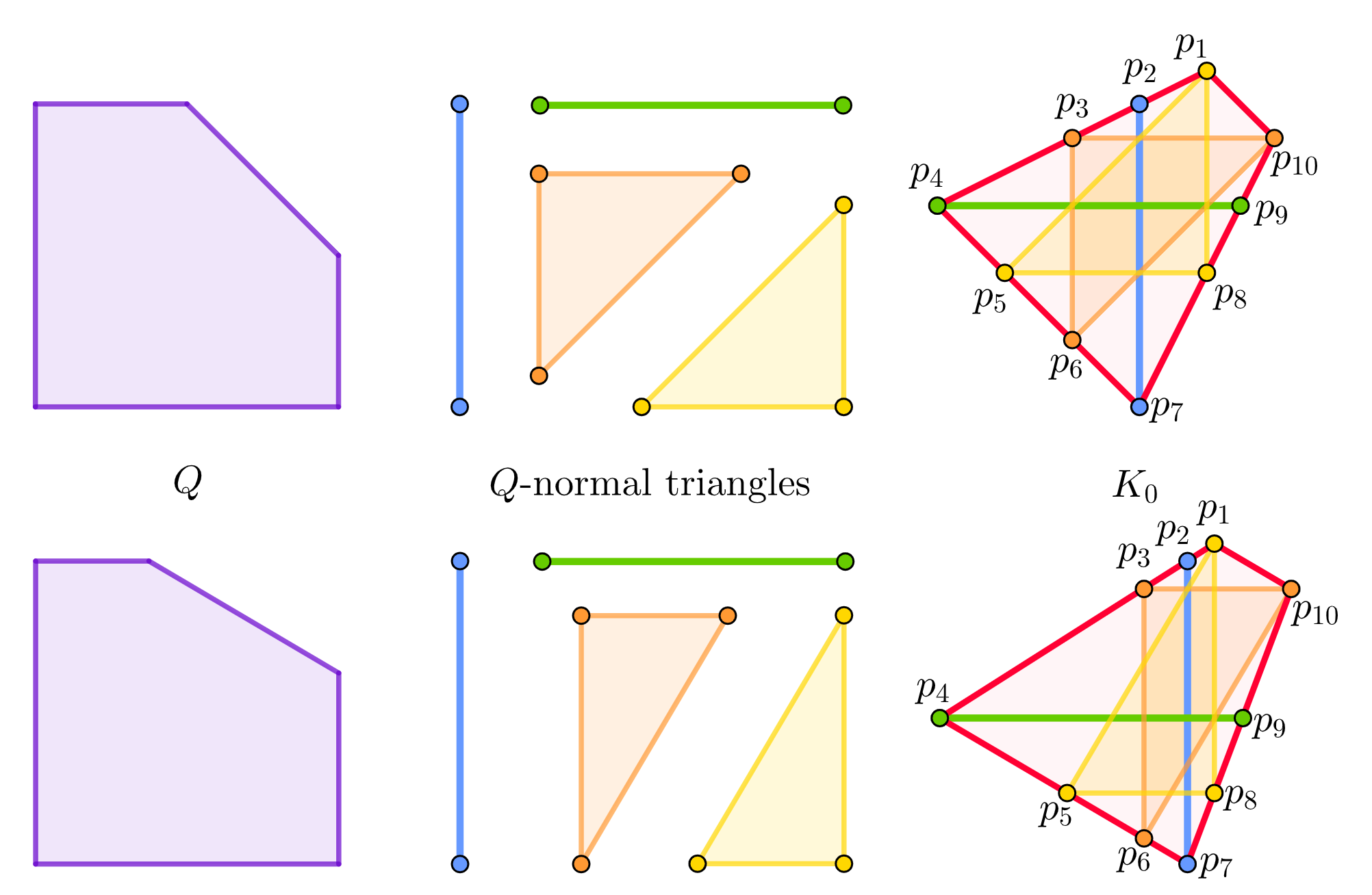}
    \caption{The Meylender--Thalmaier conjecturally optimal $Q$-cover $K_0$ (right) in the case where $Q$ is a square truncated at one corner.}
    \label{fig:penta2}
\end{figure}

Label the vertices of the four $Q$-normal triangles $p_1, \ldots, p_{10}$ as shown in \Cref{fig:penta2}. The trapezoid $K_0$ in this figure can be described as follows: it corresponds to a unique placement (up to translation) of the four $Q$-normal triangles for which the points $p_1, p_2, p_3, p_4$ are collinear, the points are $p_4, p_5, p_6, p_7$ are collinear, and the points $p_7, p_8, p_9, p_{10}$ are collinear. 

\begin{conjecture}\label{conj:meylender-thalmaier}
Let $Q$ be a pentagon obtained from a square by truncating one of its corners. Then the systolic ratio $\frac{c(K\times Q)^2}{2 \area K \cdot \area Q}$ is strictly less than $1$ for any convex shape $K$, and it is maximized with respect to $K$ if and only if $K$ is a translate of $K_0$ or $-K_0$.
\end{conjecture}

Note that this conjectured inequality is strictly stronger than the corresponding special case of the Viterbo inequality. To quantify how much it is stronger, let us parametrize the space of squares truncated at one corner as follows. Let $A,B,C,D,E$ be the vertices of $Q$ so that $AB \parallel DE$ and $AE \parallel BC$. Let the lines $AB$ and $CD$ intersect at the point $F$, and let the lines $CD$ and $AE$ intersect at $G$. Let $\alpha = \frac{|BF|}{|AF|}$, $\beta = \frac{|EG|}{|AG|}$, and $\gamma = 1 - \alpha - \beta$. The parameters satisfy the constraints $0 < \alpha, \beta, \gamma < 1$. An elementary computation allows one to express the areas of $Q$ and $K_0$ in terms of $\alpha, \beta, \gamma$, showing that
\[
\frac{c(K_0\times Q)^2}{2 \area K_0 \cdot \area Q} = \frac{1}{ 1 + \frac{(\alpha^2 + \beta^2) \gamma^2}{1 - \alpha^2 - \beta^2 - \gamma^2}} < 1. 
\]

\section{A glimmer: hexagonal norm}\label{sec:hexagonal-covering}

In this section we study \Cref{prob:cover} in the case where $Q$ is a convex centrally symmetric hexagon. We solve it completely in the case where $Q$ is affinely equivalent to a regular hexagon; this proves the remaining part of \Cref{thm:quadri-hexa-viterbo}.

\subsection{Regular hexagon}
Let $Q$ be a hexagon affinely equivalent to a regular one. After an affine transformation, we can assume that $Q$ is the regular hexagon of unit euclidean width. The $Q$-normal triangles can have sides of three parallel classes, orthogonal to the sides of $Q$. This list of $Q$-normal triangles consists of two regular triangles with side length $2/3$, and three unit line segments (see \Cref{fig:hexa1}). The regular triangle $K_0$ whose three altitudes are $Q$-normal line segments is expected to be an area-minimizing $Q$-cover, since we already know that the pair $(Q, K_0)$ attains equality in the Viterbo inequality. We will confirm this, but we will also discover that there are many other area-minimizing $Q$-covers different from $K_0$ and $-K_0$, and moreover, the set of minimizers consists of components of different dimensions. This adds to the complexity of this $Q$-cover problem.

\begin{figure}[ht]
    \centering
    \includegraphics[width=\linewidth]{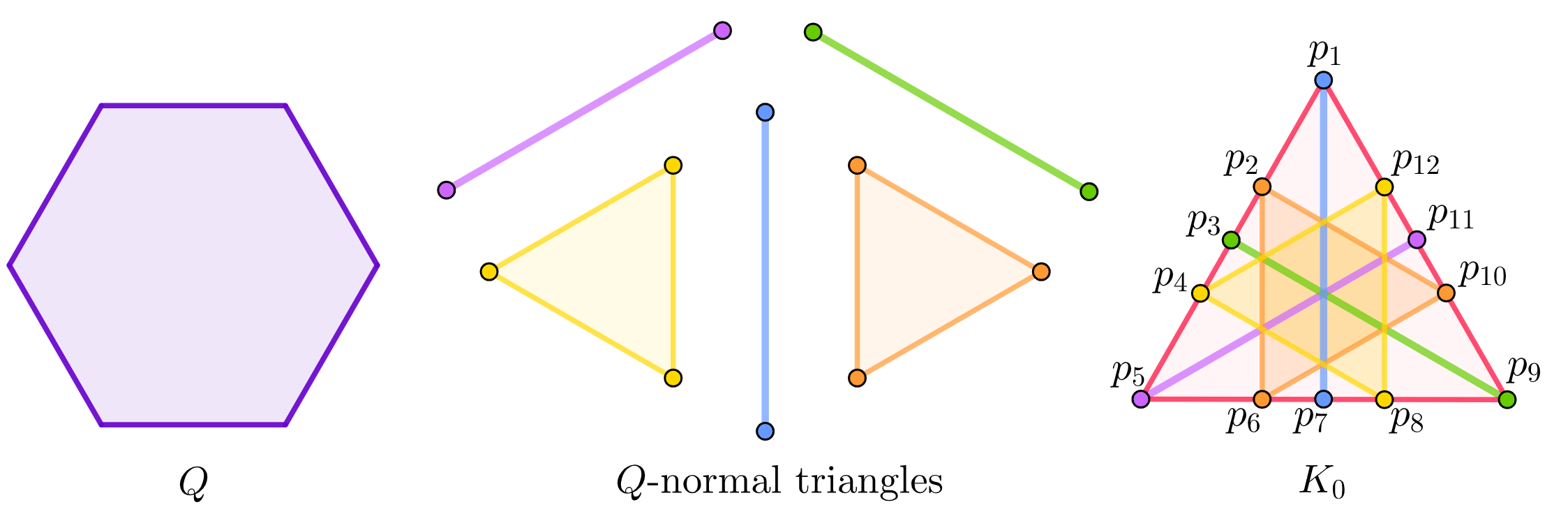}
    \caption{When $Q$ is a regular hexagon, the regular triangle $K_0$ is one of the smallest $Q$-covers.}
    \label{fig:hexa1}
\end{figure}

To solve this problem we employ a new trick, whose essence is captured in the following lemma. 
\begin{lemma}\label{lem:mitrofanov-cap}
Let $\Phi \subset K$ be nested convex polygons. For each side $s_i$ of $\Phi$, let $v_i$ be any vertex of $K$ that maximizes the inner product $\langle \cdot, \nu_i\rangle$ with the outward normal $\nu_i$ to $s_i$. The (possibly degenerate) triangle $C_i \coloneq \conv(\{v_i\} \cup s_i)$ will be called the \emph{cap} based on $s_i$. Then the caps $C_i$ based on all sides of $\Phi$ do not overlap, meaning that their interiors are pairwise disjoint.
\end{lemma}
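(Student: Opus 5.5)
The plan is to reduce non-overlapping to a monotonicity statement about the cyclic order of the apexes $v_i$, and then to run a sweeping argument around $\Phi$. Label the sides $s_1,\dots,s_n$ of $\Phi$ counterclockwise, with $s_i=[a_i,a_{i+1}]$ (indices mod $n$), outward normals $\nu_i$ and supporting lines $\ell_i$. The outward normals then also rotate counterclockwise. Each cap satisfies
\[
C_i\subset K\cap\{x : \langle x,\nu_i\rangle\ge \langle a_i,\nu_i\rangle\},
\]
because $s_i$ lies on $\ell_i$ and $v_i$ lies on the far side of $\ell_i$ or on it, since $v_i$ maximizes $\langle\cdot,\nu_i\rangle$ over $K\supset\Phi$. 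In particular every cap lies in $K\setminus\operatorname{int}\Phi$, and degenerate caps (those with $v_i\in\ell_i$) have empty interior and can be ignored.

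\emph{Step 1: the apexes are ordered.} As the normal direction rotates counterclockwise, the maximizing vertices of $K$ move counterclockwise along $\partial K$. I would record this through the two inequalities $\langle v_{i+1}-v_i,\nu_{i+1}\rangle\ge 0\ge\langle v_{i+1}-v_i,\nu_i\rangle$, which say that the vector $v_{i+1}-v_i$ points in a direction lying, angularly, between the tangent directions of $\ell_i$ and $\ell_{i+1}$. Summing over $i$, the closed polygon $v_1\to v_2\to\dots\to v_n\to v_1$ traverses $\partial K$ (possibly with repetitions) monotonically and winds exactly once counterclockwise.

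\emph{Step 2: adjacent caps.} The caps $C_i$ and $C_{i+1}$ share the vertex $a_{i+1}$ of $\Phi$. I will show that the ray from $a_{i+1}$ through $v_i$ is not counterclockwise of the ray from $a_{i+1}$ through $v_{i+1}$ (angles measured at $a_{i+1}$). Then $C_i$ lies in the angular sector at $a_{i+1}$ from the direction of $a_i$ to that of $v_i$, and $C_{i+1}$ lies in the sector from $v_{i+1}$ to $a_{i+2}$, so the two sectors meet only along a boundary ray. The ray comparison should follow from the inequalities of Step 1: $v_i$ and $v_{i+1}$ both lie beyond or on $\ell_i\cup\ell_{i+1}$, and the displacement $v_{i+1}-v_i$ has nonnegative $\nu_{i+1}$-component and nonpositive $\nu_i$-component. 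Hence, as seen from $a_{i+1}=\ell_i\cap\ell_{i+1}$, the point $v_{i+1}$ cannot lie clockwise of $v_i$.

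\emph{Step 3: all caps, via a global angular picture.} This is the step I expect to be the main obstacle, since caps far apart along $\Phi$ might a priori cross when $K$ is long and thin. Rather than separating each pair $C_i$, $C_j$ by hand, I would use a sweeping argument. Consider the closed polygonal curve
\[
\sigma=a_1\to v_1\to a_2\to v_2\to\dots\to a_n\to v_n\to a_1,
\]
and note that the regions bounded by $\partial\Phi$ and $\sigma$ are exactly the caps together with the triangles $\conv\{a_{i+1},v_i,v_{i+1}\}$. I would then show, using Steps 1 and 2, that $\sigma$ is a weakly simple curve winding once around $\Phi$. Concretely, the rays from a fixed interior point $o\in\operatorname{int}\Phi$ through the points of $\sigma$ turn monotonically, because each piece $a_i\to v_i$, $v_i\to a_{i+1}$, $a_{i+1}\to\dots$ is a segment lying outside $\operatorname{int}\Phi$ whose angular progress about $o$ is nonnegative by the above inequalities. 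It follows that the winding number about any point of $K\setminus\Phi$ is at most $1$. Since each cap contributes its area with winding number $+1$ and the caps' contributions add, overlapping interiors would produce a point of winding number at least $2$, which is a contradiction. If the monotonicity about $o$ turns out to be delicate for segments that pass close to $\Phi$, I would fall back on the polar angle about $o$ composed with the radial projection onto $\partial\Phi$, which is monotone on $K\setminus\operatorname{int}\Phi$ along each supporting-line region.
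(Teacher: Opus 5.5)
\textbf{There is a genuine gap in Step 3.} Your reduction there is sound, and in fact it is an equivalence. As $1$-chains, $\sigma-\partial\Phi=\sum_i\partial C_i$, where $\partial C_i$ is traversed $a_i\to v_i\to a_{i+1}\to a_i$, which is counterclockwise. Hence, off the curves, $w_\sigma=\mathbf 1_\Phi+\sum_i\mathbf 1_{C_i}$. (So the regions between $\sigma$ and $\partial\Phi$ are just the caps; the triangles $\conv\{a_{i+1},v_i,v_{i+1}\}$ lie between $\sigma$ and the apex polygon.)

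This means ``$w_\sigma\le 1$'' is merely a restatement of the lemma. Step 3 therefore carries the entire content, and the argument you give for it is false: the inequalities of Step 1 do not make the pieces of $\sigma$ turn counterclockwise about a fixed $o\in\operatorname{int}\Phi$. Take $\Phi=[0,1]^2$ and $K=\conv(\Phi\cup\{(-5,-1),(6,2)\})$, so that $v_1=v_4=(-5,-1)$ and $v_2=v_3=(6,2)$. For $o=(\frac12,\frac12)$, the polar angle about $o$ decreases from $225^\circ$ to about $195^\circ$ along the segment $a_1=(0,0)\to v_1$.

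More fundamentally, whenever $v_{i-1}=v_i$ (forced, e.g., when $K$ is a triangle and $\Phi$ has four or more sides), $\sigma$ contains the spike $v_{i-1}\to a_i\to v_i$. This is a single segment traversed back and forth, so one of the two traversals turns clockwise about every point off the line through $a_i$ and $v_i$. In the example the two spike lines, $y=x/5$ and $y=1+(x-1)/5$, are parallel, so no choice of $o$ rescues the argument.

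Your fallback changes nothing. Radial projection from $o$ onto $\partial\Phi$ preserves the polar angle about $o$. Moreover, any projection that is monotone along $\sigma$ would have to collapse every spike to a point. The same example shows that the apex polygon in Step 1 need not wind once: here it is two points traversed back and forth. Step 2 is correct, but it only separates adjacent caps. The far-apart pairs you were worried about are exactly what remains unproved.

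\textbf{The missing idea} is a separating line for each pair of caps that does not depend on any cyclic ordering. The paper gets it from normalized height functions. For a nondegenerate cap set $f_i(x)=\langle x-a_i,\nu_i\rangle/\langle v_i-a_i,\nu_i\rangle$, so that $f_i=0$ on $s_i$ and $f_i(v_i)=1$.
\begin{itemize}
\item Because $\Phi$ lies on the inner side of $\ell_j$ and $v_j$ maximizes $\langle\cdot,\nu_j\rangle$ over $K$, every $f_j$ satisfies $f_j\le 0$ on $\Phi$ and $f_j\le 1$ on $K$.
\item Evaluating at the vertices of $C_i$ gives $f_i-f_j\ge 0$ on $s_i$ and at $v_i$, hence on all of $C_i$. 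Symmetrically, $f_j\ge f_i$ on $C_j$.
\item A common interior point would make the affine function $f_i-f_j$ vanish on an open set, hence identically. That forces $s_i\subset\ell_j$, which is impossible for $i\ne j$.
\end{itemize}
So each cap lies in its own cell of the maximization diagram of the $f_i$, and no ordering, adjacency, or winding-number argument is needed.
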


\begin{proof}
In this argument, disregard all caps $C_i$ that have empty interior. For each $C_i$ with non-empty interior, introduce the linear function $f_i : \mathbb{R}^2 \to \mathbb{R}$ such that $f_i(v_i) = 1$ and $f_i\vert_{s_i} = 0$. By construction, $f_i\vert_\Phi \le 0$ and $f_i\vert_F \le 1$. Then for any $j \neq i$, we have $(f_i - f_j)\vert_{s_i} = - f_j\vert_{s_i} \ge 0$, and $f_i(v_i) - f_j(v_i) = 1 - f_j(v_i) \ge 0$. Therefore, $f_i \ge f_j$ on $C_i$, and moreover, $f_i > f_j$ on the interior of $C_i$. If there was a point $x$ in the intersection of the interiors of $C_i$ and $C_j$, we would have $f_i(x) > f_j(x)$ and $f_j(x) > f_i(x)$ simultaneously. Therefore, $C_i$ and $C_j$ do not overlap.
\end{proof}

\begin{theorem}\label{thm:serbia2}
Let $Q$ be the regular hexagon of unit width, and let $K$ be any $Q$-cover. Then $\area K \ge \area K_0 = \frac{1}{2 \area Q}$, where $K_0$ is the regular triangle with unit height and sides parallel to those of $Q$. 
\end{theorem}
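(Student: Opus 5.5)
The plan is to combine the enclosed-area bound of \Cref{sec:triangular-covering} with the cap lemma, \Cref{lem:mitrofanov-cap}. Fix a $Q$-cover $K$. After translations we may assume that $K$ contains the two $Q$-normal regular triangles $T$ and $-T$, each of side $2/3$, and a unit segment in each of the three directions $\nu_1,\nu_2,\nu_3$ orthogonal to the sides of $Q$. Set $\Phi \coloneq \conv(T\cup(-T))$, the convex hull of the two triangles as they actually sit inside $K$.

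\textbf{Step 1: bound $\area\Phi$.} The sides of $T$ and of $-T$ are orthogonal to the three outward normal directions of a triangle $\triangle$. So $T$ and $-T$ are exactly the two $\triangle$-normal triangles for a suitable triangular norm, and the first method of \Cref{sec:triangular-covering} applies. There is a closed curve $\Gamma\subset\Phi$ with constant enclosed area, and \Cref{lem:croissant} gives
\[
\area\Phi \ge \area H_0,
\]
where $H_0$ is the regular tight hexagon (the convex hull of the centred hexagram). This should give $\area\Phi \ge \frac{1}{\sqrt3}\cdot\frac{2}{3}$, up to checking the constant. The remaining deficit, roughly $\frac{1}{3\sqrt3}$, has to come from caps.

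\textbf{Step 2: apply \Cref{lem:mitrofanov-cap} to $\Phi\subset K$.} The sides of $\Phi$ of interest are those lying on sides of $T$ or of $-T$, with outward normals $\pm\nu_k$. For the direction $\nu_k$, let $h_k$ and $h_k'$ be the heights of the caps over the extreme sides of $\Phi$ in directions $\nu_k$ and $-\nu_k$. The unit segment in direction $\nu_k$ fits into $K$, so the width of $K$ in direction $\nu_k$ is at least $1$. This gives
\[
h_k+h_k' \ge 1-w_\Phi(\nu_k),
\]
where $w_\Phi$ is the width of $\Phi$. Since the caps are interior-disjoint and lie outside $\Phi$,
\[
\area K \ge \area\Phi + \tfrac12\sum_k \bigl(\ell_k h_k + \ell_k' h_k'\bigr),
\]
where $\ell_k,\ell_k'$ are the lengths of the corresponding sides of $\Phi$. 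Bridging sides of $\Phi$, which connect $T$ to $-T$, get caps of nonnegative area, and I would simply discard them.

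\textbf{Step 3: the main obstacle.} Steps 1 and 2 must be made to cooperate. When $T$ and $-T$ are slid apart, $\area\Phi$ grows, but the extreme sides of $\Phi$ shrink and their caps lose area. So I need a single inequality in the two slide parameters showing that
\[
\area\Phi + \tfrac12\sum_k\bigl(\ell_k h_k+\ell_k'h_k'\bigr)
\]
is minimized in the symmetric configuration, where it equals $\frac1{\sqrt3}=\area K_0$. The first attempt would be to write $\area\Phi$ as the constant $\act(\Gamma)$ plus the nonnegative excess $\area\Phi-\act(\Gamma)$, and show that this excess is at least the loss in the cap term. Both quantities are piecewise quadratic in the slide parameters, so this reduces to a finite case check.

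If that fails, a fallback is to apply \Cref{lem:mitrofanov-cap} to $\Phi=T$ and to $\Phi=-T$ separately and average the two resulting bounds. Those bounds are linear in the cap heights, which makes a weighted combination with the width constraints easier to balance.

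Finally, the triangle $K_0$ from the statement attains equality. In it, $T$ and $-T$ form a tight hexagon, and every cap is a full corner triangle. This suggests that the equality analysis will show the extra minimizers as configurations where $\Phi$ is a non-regular tight hexagon.
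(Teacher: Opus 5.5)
\textbf{There is a genuine gap: your Step 2 uses the wrong caps, and the inequality you target in Step 3 is false.}

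The sides of a $Q$-normal triangle are \emph{parallel} to the $\nu_k$, since they follow outward normal directions of $Q$. So a side of $\Phi$ lying on a side of $T$ or $-T$ has outward normal \emph{perpendicular} to some $\nu_k$, not equal to $\pm\nu_k$. The segment $L_k$ is parallel to such a side and gives no useful bound on its cap. (The same orthogonal/parallel slip appears in Step 1, though its conclusion $\area\Phi\ge\frac{2}{3\sqrt3}$ is correct.)

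This is fatal at the extremal configuration. Inside $K_0$, the vertices of $T$ and $-T$ are forced to be the six trisection points of the sides of $K_0$. So $\Phi$ is the regular hexagon obtained by cutting off the corners of $K_0$, and every side of $\Phi$ joins a vertex of $T$ to a vertex of $-T$. The region $K_0\setminus\Phi$ consists of three corner caps over three of these bridging sides; the caps over the other three sides are degenerate, so it is also not true that every cap is a corner triangle. Once bridging caps are discarded, your Step 3 expression equals $\area\Phi=\frac{2}{3\sqrt3}<\frac1{\sqrt3}$ there, so the minimum value you claim is simply wrong.

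The fallback cannot work either. The outward normals $n_1,n_2,n_3$ of the sides of $T$ sum to zero. Hence $\area T$ plus the caps over the sides of $T$ equals $\frac13\sum_i h_K(n_i)$ wherever $T$ sits, and for $-T$ one gets $\frac13\sum_i h_K(-n_i)$. Take the $Q$-cover $H=\conv(L_1\cup L_2\cup L_3)$, with the three segments crossing at their midpoints. Its inradius $\frac{\sqrt3}{4}$ exceeds the circumradius $\frac{2}{3\sqrt3}$ of $T$, so $\pm T$ fit. For $H$ both bounds equal $\frac{\sqrt3}{4}<\frac1{\sqrt3}$, and no balancing against width constraints can repair that.

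\textbf{What is missing are two ideas from the paper.}

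First, there is no trade-off to analyse. If $\Phi$ is not a tight hexagon, slide $T$ and $-T$ toward each other so that $\Phi$ shrinks (hence stays inside $K$) until it becomes tight. Then $\area\Phi=2A$ exactly, with $A=\area T$, and one only needs caps of total area $A$.

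Second, take the caps over the six sides of this tight hexagon, all of which are bridging. Label its vertices $x_1,\dots,x_6$ cyclically, alternating between $T$ and $-T$. Then $x_1-x_2=x_5-x_4$, so $x_1x_2x_4x_5$ is a parallelogram whose other two sides $x_5x_1$ and $x_2x_4$ are sides of $T$ and $-T$, of length $\frac23$ and parallel to some $\nu_j$. Hence, for \emph{every} tight hexagon and not just the regular one, the width of $\Phi$ across the opposite sides $s_1=x_1x_2$ and $s_4=x_4x_5$ is exactly $\frac23$ of the projection of $L_j$ onto their normal.

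Now take $f$ linear with $f=-1$ on $s_1$ and $f=1$ on $s_4$. The width fact gives $f(v_4)-f(v_1)\ge 3$ for the cap apices $v_1,v_4$. Therefore $\area C_1+\area C_4\ge\area\conv(\{0\}\cup s_1)$, where $0$ is the centre of $\Phi$. Summing over the three pairs gives total cap area at least $\frac12\area\Phi=A$, and hence $\area K\ge 3A=\area K_0$.
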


\begin{proof}
Let $T_y, T_o, L_1, L_2, L_3 \subset K$ be the five $Q$-normal shapes shifted to fit into $K$. In fact, we can assume that $K$ is the convex hull of these five shapes, and so it is a convex polygon. Denote by $A$ the area of $T_y$. Then $\area K_0 = 3A$, and we want to prove that $\area K \ge 3A$.

Let us first consider the relative position of $T_y$ and $T_o$. Similarly to the argument at the end of \Cref{sec:triangular-covering}, we check whether $\Phi \coloneq \conv (T_y \cup T_o)$ is a tight hexagon, that is, an area-minimizing translation cover for $T_y$ and $T_o$ (its area is $2A$, according to \Cref{sec:triangular-covering}). If not, we can shift $T_y$ and $T_o$ towards each other, so that $\Phi$ strictly decreases until it becomes a tight hexagon. Then shift $K$ to make $\Phi$ centered at the origin.

Now we apply \Cref{lem:mitrofanov-cap} to $\Phi \subset K$. We obtain six non-overlapping caps $C_i = \conv(\{v_i\} \cup s_i)$, $i=1,\ldots, 6$, based on the six sides $s_i$ of $\Phi$ (some of them may be degenerate). It suffices to prove that $\sum\limits_{i=1}^6 \area C_i \ge A$. To this end, let $R_i \coloneq \conv(\{0\} \cup s_i)$ be the triangles slicing $\Phi$. Let us prove that $\area C_1 + \area C_4 \ge \area R_1 = \area R_4$. If $s_1$ is a degenerate side, then $\area R_1 = 0$ and there is nothing to prove. Otherwise, $s_1$ and $s_4$ are parallel line segments. Let $f_1$ be the linear function such that $f\vert_{s_1} = -1$ and $f\vert_{s_4} = 1$. If we denote $\area R_1 = \area R_4 = A_1$, then we have $\area C_1 = A_1 (f\vert_{s_1}  -f(v_1)) = A_1 (-1 -f(v_1))$ and $\area C_4 = A_1(f(v_4) - f\vert_{s_4}) = A_1(f(v_4)-1)$. Since there is a line segment $L_j = [u,w] \subset K$ such that $f(u) - f(w) \ge \frac32(f\vert_{s_4} - f\vert_{s_1}) = 3$, it follows from the definition of the points $v_i$ that $f(v_4) - f(v_1) \ge 3$ as well. Therefore, $\area C_1 + \area C_4 \ge \area R_1 = \area R_4 = A_1 (f(v_4) - f(v_1) - 2) \ge A_1$. Summing this inequality with the analogous ones for the other caps, we obtain $\sum\limits_{i=1}^6 \area C_i \ge \frac12 \area \Phi = A$, which completes the proof.
\end{proof}

As a corollary we immediately obtain \Cref{thm:quadri-hexa-viterbo} in the case where $Q$ is an affinely regular hexagon.

An attempt to solve the same problem using the method of closed curves enclosing constant area leads to the following observations. First of all, such a curve exists: if we label the vertices of the $Q$-normal triangles along $\partial K_0$ as $p_1, \ldots, p_{12}$, and denote by $v_i$ the sliding copy of $p_i$, then $\Gamma = (v_1 \to \ldots \to v_{12} \to v_1)$ is a curve enclosing constant area, equal to $\area K_0$. It is the second step of the method that we do not know how to complete: why must $\act(\Gamma) \le \area \conv \Gamma$ hold?

However, the characterization of equality cases is easier done with this method rather than by analyzing the proof we gave. Recall that the smallest area of a $Q$-cover is attained when, roughly speaking, $\Gamma$ follows the boundary of $\partial K$ counterclockwise, except for possible portions where $\Gamma$ traces an arc inside $K$ back and forth without any contribution to the enclosed area. In \Cref{fig:hexa2} one sees an example where this cancellation happens thanks to the fact that $p_1 = p_{11}$, $p_3 = p_5$, and $p_7 = p_9$, and the exact position of the yellow triangle does not matter. Shifting the orange triangle does not change the enclosed area, so we obtain a two-dimensional family of equality cases. They are mostly hexagons (in \Cref{fig:hexa2} it is the most symmetric hexagon in the family), but the boundary cases are pentagons and quadrilaterals. The centrally symmetric image of this family yields another two-dimensional family of equality cases. To the best of our knowledge, all these equality cases were previously unknown. Together with $K_0$ and $-K_0$, these two families exhaust all equality cases, as can be verified by analyzing $\Gamma$.


\begin{figure}[ht]
  \centering
    \includegraphics[width=0.5\linewidth]{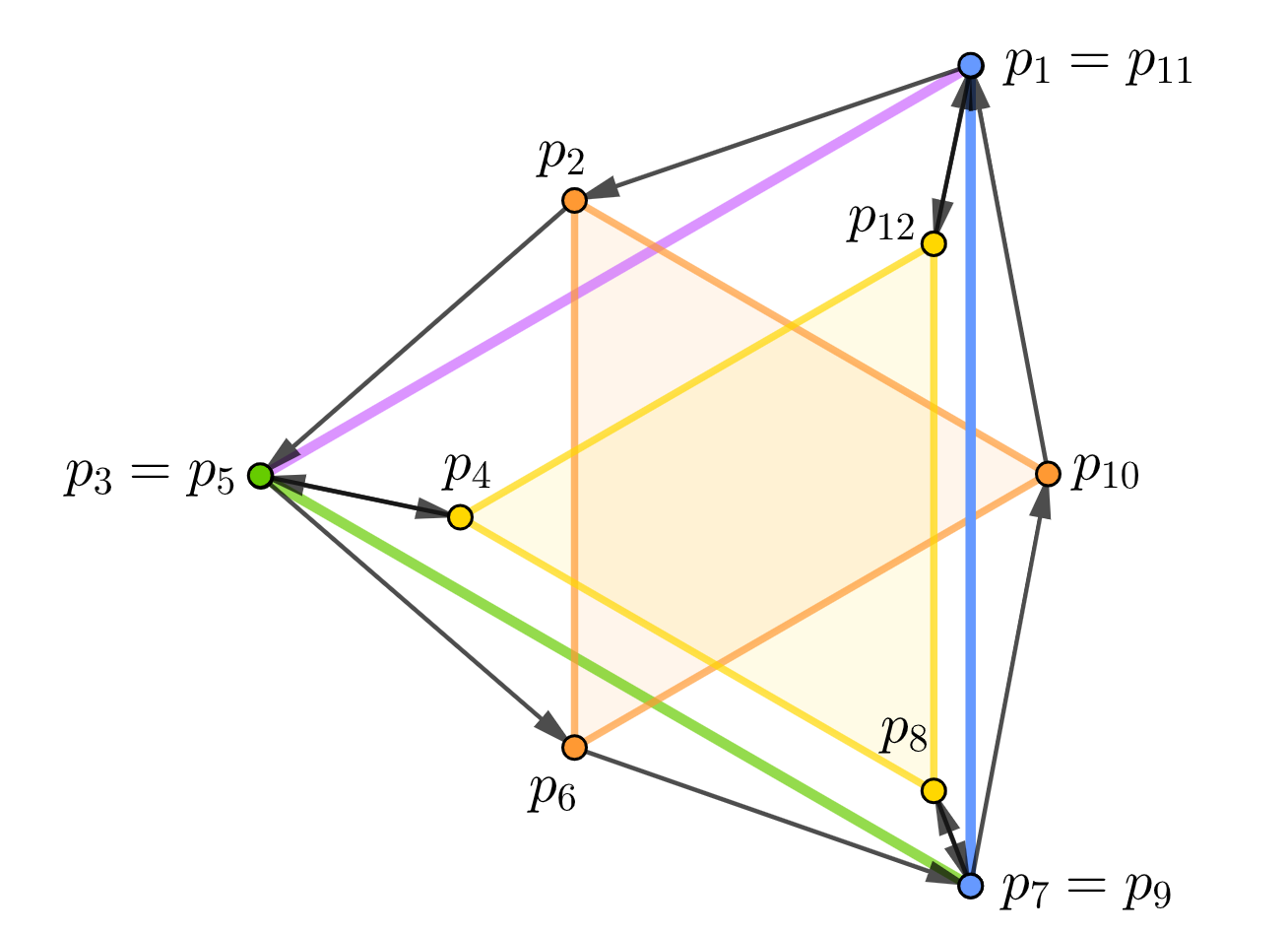}
  \caption{A two-dimensional family of smallest $Q$-covers: the orange triangle can be shifted---while preserving the area of the convex hull---as long as it intersects all three $Q$-normal line segments. The position of the yellow triangle is irrelevant as long as it stays inside the convex hull of the other four $Q$-normal shapes.} 
  \label{fig:hexa2}
\end{figure}

It is an open question whether each of these sets $K$ gives rise to a symplectic ball $K \times Q$.

\subsection{Non-regular hexagons}

The methods of this paper fail to provably find the smallest $Q$-cover for any other hexagon $Q$ other than the affinely regular one. We conclude by the following observation, obtained by swapping the role of $K$ and $Q$ in some of the equality cases that we already found earlier, and where $K$ turned out to be a centrally symmetric hexagon. There is a full-dimensional open set in the space of centrally symmetric hexagons such that, for any $Q$ from this set, there are at least four shapes $K$ attaining equality in the Viterbo inequality. They appear to be isolated, and conjecturally, they have the smallest area among $Q$-covers. See \Cref{fig:hexa3} for one such example: $K_0$, $K_1$, $-K_0$, and $-K_1$ are conjecturally the only area-minimizing $Q$-covers there. Observe that the orange triangle slides freely inside $K_1$.

\begin{figure}[ht]
    \centering
    \includegraphics[width=0.8\linewidth]{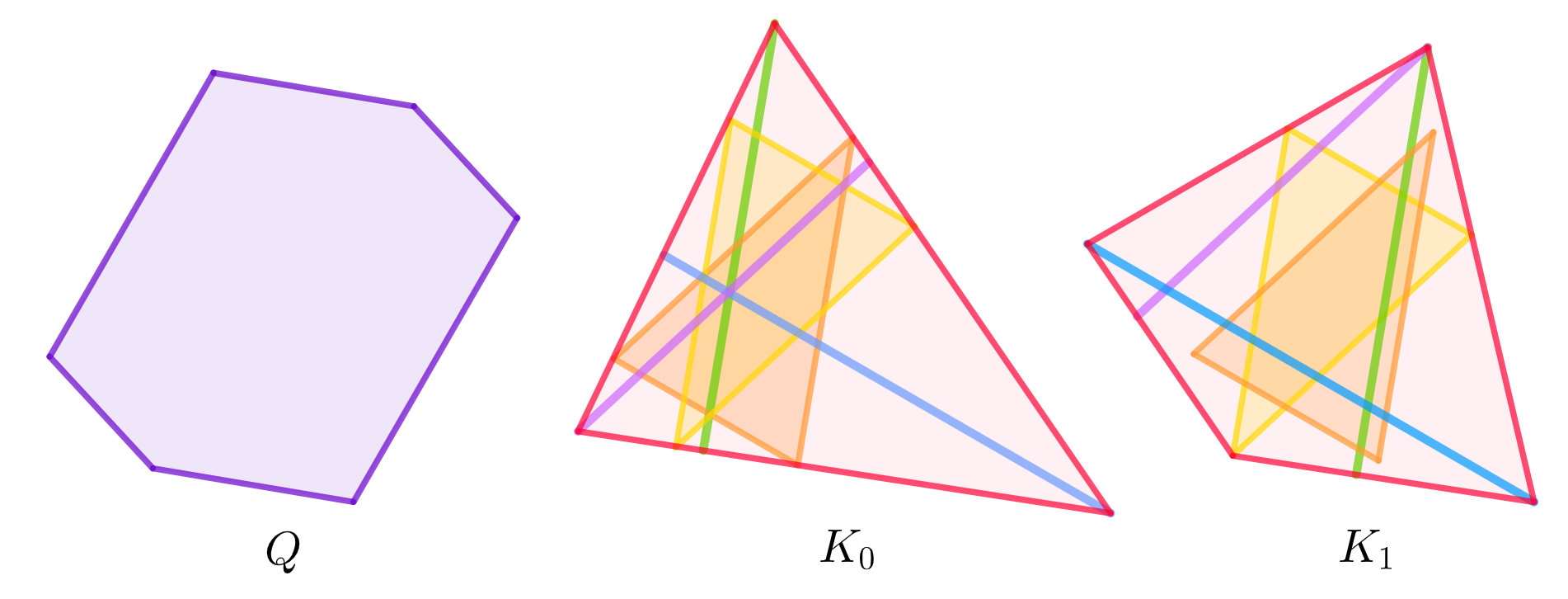}
    \caption{Expected smallest $Q$-covers for a non-regular centrally symmetric hexagon $Q$.}
    \label{fig:hexa3}
\end{figure}

\bibliography{viterbo}
\bibliographystyle{abbrv}
\end{document}